\documentclass[12pt]{article}
\usepackage{amsmath,mathdots}
\usepackage{amssymb}
\usepackage{latexsym}
\usepackage{amsthm}
\usepackage{amsmath,amssymb,amsfonts,amsthm}
\usepackage{mdframed}
\usepackage{graphicx}
\usepackage{stmaryrd}
\usepackage{ulem}
\usepackage{gensymb}
\usepackage{tikz}   
\usepackage{stmaryrd}
\usepackage{multirow}

\usepackage{graphicx}
\usepackage{tikz}
\usetikzlibrary{matrix,arrows}
\usetikzlibrary{positioning}
\usetikzlibrary{fit}
\usetikzlibrary{patterns}

\usepackage[colorlinks,
linkcolor=blue,
anchorcolor=blue,
citecolor=blue
]{hyperref}


\newtheorem{thm}{Theorem}[section]

\newtheorem{coro}[thm]{Corollary}

\newtheorem{remark}[thm]{Remark}

\usepackage{graphicx}
\usepackage{tikz}
\usetikzlibrary{matrix,arrows}
\usetikzlibrary{positioning}
\usetikzlibrary{fit}
\usetikzlibrary{patterns}

\newcommand{\dbrac}[1]{{\llbracket #1 \rrbracket}} 	
\newcommand{\boks}[2]{({#1, #2})}   

\newcommand{\pattern}[4]{										
	\raisebox{0.6ex}{
		\begin{tikzpicture}[scale=0.35, baseline=(current bounding box.center), #1]
		\foreach \x/\y in {#4}		\fill[gray!20] (\x,\y) rectangle +(1,1);
		\draw (0.01,0.01) grid (#2+0.99,#2+0.99);
		\foreach \x/\y in {#3}		\filldraw (\x,\y) circle (6pt);
		\end{tikzpicture}}
}

\definecolor{red}{rgb}{1,0,0}
{}

\begin{document}

\begin{center}
{\large \bf  On (joint) equidistributions of mesh patterns 123 and 132 with symmetric shadings}
\end{center}

\begin{center}
Sergey Kitaev$^{a}$ and  Shuzhen Lv$^{b}$
\\[6pt]

$^{a}$Department of Mathematics and Statistics, University of Strathclyde, \\ 26 Richmond Street, Glasgow G1 1XH, UK\\[6pt]

$^{b}$College of Mathematical Science \& Institute of Mathematics and Interdisciplinary Sciences, Tianjin Normal University, \\ Tianjin  300387, P. R. China\\[6pt]

Email:  $^{a}${\tt sergey.kitaev@cis.strath.ac.uk},
           $^{b}${\tt lvshuzhenlsz@yeah.net}
\end{center}

\noindent\textbf{Abstract.}
A notable problem within permutation patterns that has attracted considerable attention in literature since 1973 is the search for a bijective proof demonstrating that 123-avoiding and 132-avoiding permutations are equinumerous, both counted by the Catalan numbers. Despite this equivalence, the distributions of occurrences of the patterns 123 and 132 are distinct. When considering 123 and 132 as mesh patterns and selectively shading boxes, similar scenarios arise, even when avoidance is defined by the Bell numbers or other sequences, rather than the Catalan numbers.

However, computer experiments suggest that mesh patterns 123 and 132 may indeed be equidistributed. Furthermore, by considering symmetric shadings relative to the anti-diagonal, a maximum of 93 such equidistributed pairs can potentially exist. This paper establishes 75 such equidistributions, leaving the justification of the remaining cases as open problems. As a by-product, we also prove 36 relevant non-symmetric equidistributions. All our proofs are bijective and involve swapping occurrences of the patterns in question, thereby demonstrating their joint equidistribution. Our findings are a continuation of the systematic study of distributions of short-length mesh patterns initiated by Kitaev and Zhang in 2019. \\

\noindent {\bf Keywords:}  mesh pattern, equidistribution, joint equidistribution, bijection\\

\noindent {\bf AMS Subject Classifications:} 05A05, 05A15.\\


\section{Introduction}\label{intro}
A permutation of length $n$, or $n$-permutation, is a rearrangement of the set $[n]:=\{1, 2, \ldots, n\}$.  Denote by  $S_n$  the set of permutations of $[n]$. For $\pi \in S_n$, let $\pi^r=\pi_n\pi_{n-1}\cdots\pi_1$ and $\pi^c= (n + 1 - \pi_1)(n + 1 - \pi_2)\cdots (n + 1-\pi_n)$
denote the \textit{reverse} and \textit{complement} of $\pi$, respectively. Then $\pi^{rc}=(n + 1-\pi_n)(n + 1-\pi_{n-1})\cdots (n + 1-\pi_1)$. A permutation $\pi_1\pi_2\cdots\pi_n\in S_n$ avoids a \textit{pattern} $p=p_1p_2\cdots p_k\in S_k$  if there is no subsequence $\pi_{i_1}\pi_{i_2}\cdots\pi_{i_k}$ such that $\pi_{i_j}<\pi_{i_m}$ if and only if $p_j<p_m$. For example, the permutation $32145$ avoids the pattern $132$. $S_n(p)$ denotes the number of permutations of length $n$ avoiding a pattern $p$. Patterns $p_1$ and $p_2$ are \textit{Wilf-equivalent}, denoted $p_1\sim p_2$, if $|S_n(p_1)|=|S_n(p_2)|$ for all $n\geq0$. Also, $p_1$ and $p_2$ are \textit{equidistributed}, denoted  $p_1\sim_d p_2$, if the number of permutations of length $n$ with $k$ occurrences of $p_1$ is equal to that with $k$ occurrences of $p_1$ for any $k,n\geq 0$.
 
Patterns in permutations have attracted much attention in the literature (see~\cite{Kit} and references therein), and this area of research continues to grow rapidly.  A significant problem in the field that has garnered considerable attention since 1973 is finding a bijective proof of the equivalence $123 \sim 132$, specifically that $|S_n(123)|=|S_n(132)|=\frac{1}{n+1}{2n\choose n}$, the $n$-th \textit{Catalan number}, for $n\geq 1$ (for example, see \cite{EP04,Kr01,MDD,R02,R88,SS85,W95}).  Claesson and Kitaev \cite{ClaKit2008} discovered that some of the published bijections can be easily derived from others via ``trivial'' bijections. They provide a comprehensive survey and systematic analysis of these bijections. Subsequent work in this area includes \cite{BloomSar, Sar2011}. Note that the patterns 123 and 132 are not equidistributed; for instance, there are 6 (resp., 5) 4-permutations with one occurrence of the pattern 123 (resp., 132).

The notion of a \textit{mesh pattern}, generalizing several classes of patterns, was introduced by Br\"and\'en and Claesson \cite{BrCl} to provide explicit expansions for certain permutation statistics as, possibly infinite, linear combinations of (classical) permutation patterns. 
A pair $(\tau,R)$, where $\tau$ is a permutation of length $k$ and $R$ is a subset of $\dbrac{0,k} \times \dbrac{0,k}$, where
$\dbrac{0,k}$ denotes the interval of the integers from $0$ to $k$, is a
mesh pattern of length $k$.
Let $\boks{i}{j}$ denote the box whose corners have coordinates $(i,j), (i,j+1),
(i+1,j+1)$, and $(i+1,j)$. Let the horizontal lines represent the values,  and the vertical lines denote the positions in the pattern. Mesh patterns can be drawn by shading the boxes in $R$. The picture 
\[
\pattern{scale=0.8}{3}{1/1,2/3,3/2}{0/0,1/2, 2/1,3/1}
\]
represents the mesh pattern with $\tau=132$ and $R = \{\boks{0}{0},\boks{1}{2},\boks{2}{1},\boks{3}{1}\}$. Note that occurrences of patterns  $\pattern{scale=0.5}{2}{1/1,2/2}{1/0, 1/1,1/2}$ and $\pattern{scale=0.5}{2}{1/2,2/1}{1/0, 1/1,1/2}$ are known as \textit{ascents} and \textit{descents}. Many papers are dedicated to the study of mesh patterns and their generalizations; for example, see \cite{AKV, BG, Borie, HanZeng, KL, T2, TR}. However, the first systematic study of avoidance of mesh patterns was not conducted until \cite{Hilmarsson2015Wilf}, and the study of their distribution not until \cite{KZ, KZZ}.

In relation to our paper, Kitaev and Liese~\cite{KL} have demonstrated Wilf-equivalence of the patterns $\pattern{scale=0.5}{3}{1/1,2/2,3/3}{0/1, 0/2,1/0,2/0}$ and  $\pattern{scale=0.5}{3}{1/1,2/3,3/2}{0/1, 0/2,1/0,2/0}$. These patterns are not equidistributed; for example, there are 4 (resp., 3) 4-permutations containing exactly one occurrence of the former (resp., latter) pattern. However, as proved in Theorem~\ref{thm-L-shape}, the patterns $\pattern{scale=0.5}{3}{1/1,2/2,3/3}{0/0,0/1, 0/2,1/0,2/0}$ and  $\pattern{scale=0.5}{3}{1/1,2/3,3/2}{0/0,0/1, 0/2,1/0,2/0}$ are not only Wilf-equivalent but also equidistributed.  On another note, Claesson~\cite{Claesson} proved that $\pattern{scale=0.5}{3}{1/1,2/2,3/3}{2/0, 2/1,2/2,2/3}$ and  $\pattern{scale=0.5}{3}{1/1,2/3,3/2}{2/0, 2/1,2/2,2/3}$ are Wilf-equivalent and are both counted by the \textit{Bell numbers} $1, 1, 2, 5, 15, 52, 203, 877, 4140,\ldots$. These patterns are not equidistributed; for example, there are 7 (resp., 6) 4-permutations containing exactly one occurrence of the former (resp., latter) pattern. For another relevant fact, as discussed  by Avgustinovich at el.~\cite{AKV}, the patterns $\pattern{scale=0.5}{3}{1/1,2/2,3/3}{1/1, 1/2,2/1,2/2}$ and  $\pattern{scale=0.5}{3}{1/1,2/3,3/2}{1/1, 1/2,2/1,2/2}$ are not Wilf-equivalent. However, as proved in Theorem~\ref{2x2-thm}, the patterns $\pattern{scale=0.5}{3}{1/1,2/2,3/3}{2/2, 2/3,3/2,3/3}$ and  $\pattern{scale=0.5}{3}{1/1,2/3,3/2}{2/2, 2/3,3/2,3/3}$ are equidistributed. Interestingly, in some cases, four patterns can be equidistributed. Three (resp., two and two) examples of such situations are presented in Table~\ref{tab-1} (resp., Tables~\ref{tab-3} and~\ref{tab-6}).
  
Note that the patterns 123 and 132 remain invariant under the usual group-theoretic inverse, reflecting the elements of a pattern across the anti-diagonal. This motivated us to restrict our attention in systematic studies to shadings symmetric with respect to the anti-diagonal. Computer experiments, for $n\leq 9$, suggest that the mesh patterns 123 and 132 with symmetric shadings may be potentially equidistributed in a maximum of 93 cases. Interestingly, the patterns  $\pattern{scale=0.5}{3}{1/1,2/2,3/3}{0/0, 0/2,0/3,2/0,3/0}$  and $\pattern{scale=0.5}{3}{1/1,2/3,3/2}{0/0, 0/2,0/3,2/0,3/0}$ (resp., $\pattern{scale=0.5}{3}{1/1,2/2,3/3}{0/0, 0/2,0/3,2/0,2/2,3/0}$  and $\pattern{scale=0.5}{3}{1/1,2/3,3/2}{0/0, 0/2,0/3,2/0,2/2,3/0}$) are equidistributed for $n\leq 7$, but not for $n=8$. 

In this paper we will establish 75 out of the potential 93 equidistributions (presented in Tables~\ref{tab-1}--\ref{tab-6}), leaving the justification of the remaining cases as open problems (see Table~\ref{unsolved-table}). As a by-product, we also note 36 relevant non-symmetric equidistributions (see Table~\ref{non-sym-table}). All our proofs are bijective. In particular, the results in Section~\ref{L-mesh-sec} (resp., Section~\ref{cor-box-sec}) rely on the bijection in the proof of Theorem~\ref{thm-L-shape} (resp., Theorem~\ref{9-box-rs}). Our findings are a continuation of the systematic exploration of distributions of short-length mesh patterns initiated by Kitaev and Zhang in 2019.

In fact, our bijective proofs of $p_1\sim_d p_2$ interchange occurrences of $p_1$ and $p_2$ in a permutation, and thus prove a stronger claims on \textit{joint equidistribution} $(p_1,p_2)\sim_d(p_2,p_1)$. This means that the number of permutations of length $n$ with $k$ (resp., $\ell$) occurrences of $p_1$ (resp., $p_2$) is equal to that with $\ell$ (resp., $k$) occurrences of $p_1$ (resp., $p_2$).  Therefore, every theorem in this paper stating that $p_1\sim_d p_2$, can be replaced by the more general result $(p_1,p_2)\sim_d(p_2,p_1)$. However, for brevity, we do not state the joint equidistribution results. Finally, note that in two cases (see Section~\ref{reduction-sec}) we could have used known equidistribution results for mesh patterns of length 2 to prove the equidistribution. However, we needed to derive alternative proofs to establish joint equidistribution. 

For a permutation $\pi=\pi_1\pi_2\cdots \pi_n$, $\pi_i$ is a  \textit{left-to-right minimum} (resp., \textit{right-to-left maximum}) if $\pi_i<\pi_j$ (resp., $\pi_i>\pi_j$) for all $j< i$ (resp., $j>i$). In particular, $\pi_1$ is  a left-to-right minimum. For example, in the permutation $426153$ the elements 4, 2, and 1 are left-to-right minima. Throughout the paper, $p_1$ and $p_2$ refer to the mesh patterns 123 and 132, respectively, with identical shading in question. 

Our paper is organized as following. In Section~\ref{trivial-sec} (resp., \ref{L-mesh-sec}, \ref{cor-box-sec}) we explain 22 (resp., 23, 30) equidistributions. The cases unsolved by us are presented, along with concluding remarks, in Section~\ref{concluding-sec}. Note that, while we provide thorough proofs and examples for our key results, as well as counterexamples for cases where our methods do not apply, many of the other justifications are relatively sketchy. This is because  it is not feasible to provide detailed proofs for all 111 equidistributions discussed.

\section{The first few cases}\label{trivial-sec}

In this section, we justify more straightforward equidistributions. From now on, for brevity, we usually omit ``and'' in ``$p_1$ and $p_2$'' when referring to a pair of mesh patterns $p_1=123$ and $p_2=132$ with the same shading.

\subsection{Cases justifiable by simple direct arguments}\label{direct-arg-sec}

Occurrences of the patterns in this subsection can be easily controlled in permutations. Therefore, we provide just brief justifications for the equidistributions  of the patterns listed in Table~\ref{tab-1} as bullet points. Additionally, a map that establishes the equidistribution of the patterns in question is denoted as $f$.
  \\[-3mm]

\begin{table}[t]
 	{
 		\renewcommand{\arraystretch}{1.5}
 \begin{center} 
 		\begin{tabular}{|c|c||c|c||c|c||c|c|}
 			\hline
 	\footnotesize{nr.}	& {\footnotesize patterns}  & \footnotesize{nr.}	& {\footnotesize patterns}  & \footnotesize{nr.}	& {\footnotesize patterns}  & \footnotesize{nr.}	& {\footnotesize patterns}  \\
 			\hline
 			\hline	
\footnotesize{1} & $\pattern{scale = 0.5}{3}{1/1,2/2,3/3}{0/0,0/1,0/2,0/3,1/0,1/1,1/2,1/3,2/0,2/1,2/2,2/3,3/0,3/1,3/2,3/3}\pattern{scale = 0.5}{3}{1/1,2/3,3/2}{0/0,0/1,0/2,0/3,1/0,1/1,1/2,1/3,2/0,2/1,2/2,2/3,3/0,3/1,3/2,3/3}$  & 
\footnotesize{2} & $\pattern{scale = 0.5}{3}{1/1,2/2,3/3}{0/0,0/1,0/2,0/3,1/0,1/1,1/2,1/3,2/0,2/1,2/2,2/3,3/0,3/1,3/2}\pattern{scale = 0.5}{3}{1/1,2/3,3/2}{0/0,0/1,0/2,0/3,1/0,1/1,1/2,1/3,2/0,2/1,2/2,2/3,3/0,3/1,3/2}$ & 
\footnotesize{3} & $\pattern{scale = 0.5}{3}{1/1,2/2,3/3}{0/1,0/2,0/3,1/0,1/1,1/2,1/3,2/0,2/1,2/2,2/3,3/0,3/1,3/2,3/3}\pattern{scale = 0.5}{3}{1/1,2/3,3/2}{0/1,0/2,0/3,1/0,1/1,1/2,1/3,2/0,2/1,2/2,2/3,3/0,3/1,3/2,3/3}$ &
\footnotesize{4} & $\pattern{scale = 0.5}{3}{1/1,2/2,3/3}{0/0,0/1,0/2,0/3,1/0,1/1,1/2,1/3,2/0,2/1,2/3,3/0,3/1,3/2,3/3}\pattern{scale = 0.5}{3}{1/1,2/3,3/2} {0/0,0/1,0/2,0/3,1/0,1/1,1/2,1/3,2/0,2/1,2/3,3/0,3/1,3/2,3/3}$  \\[5pt]
\hline
\footnotesize{5} & $\pattern{scale = 0.5}{3}{1/1,2/2,3/3}{0/0,0/1,0/2,0/3,1/0,1/2,1/3,2/0,2/1,2/2,2/3,3/0,3/1,3/2,3/3}\pattern{scale = 0.5}{3}{1/1,2/3,3/2}{0/0,0/1,0/2,0/3,1/0,1/2,1/3,2/0,2/1,2/2,2/3,3/0,3/1,3/2,3/3}$ & 
\footnotesize{6} & $\pattern{scale = 0.5}{3}{1/1,2/2,3/3}{0/0,0/1,0/2,1/0,1/1,1/2,1/3,2/0,2/1,2/2,2/3,3/1,3/2,3/3}\pattern{scale = 0.5}{3}{1/1,2/3,3/2}{0/0,0/1,0/2,1/0,1/1,1/2,1/3,2/0,2/1,2/2,2/3,3/1,3/2,3/3}$   &
\footnotesize{7} & $\pattern{scale = 0.5}{3}{1/1,2/2,3/3}{0/0,0/1,0/2,1/0,1/1,1/2,1/3,2/0,2/1,2/2,2/3,3/1,3/2}\pattern{scale = 0.5}{3}{1/1,2/3,3/2}{0/0,0/1,0/2,1/0,1/1,1/2,1/3,2/0,2/1,2/2,2/3,3/1,3/2}$ & 
\footnotesize{8} & $\pattern{scale = 0.5}{3}{1/1,2/2,3/3}{0/1,0/2,1/0,1/1,1/2,1/3,2/0,2/1,2/2,2/3,3/1,3/2,3/3}\pattern{scale = 0.5}{3}{1/1,2/3,3/2}{0/1,0/2,1/0,1/1,1/2,1/3,2/0,2/1,2/2,2/3,3/1,3/2,3/3}$  \\[5pt]
\hline
\footnotesize{9} & $\pattern{scale = 0.5}{3}{1/1,2/2,3/3}{0/2,0/3,1/2,1/3,2/0,2/1,2/2,2/3,3/0,3/1,3/2,3/3}\pattern{scale = 0.5}{3}{1/1,2/3,3/2}{0/2,0/3,1/2,1/3,2/0,2/1,2/2,2/3,3/0,3/1,3/2,3/3}$ &
\footnotesize{10} & $\pattern{scale = 0.5}{3}{1/1,2/2,3/3}{0/0,0/2,0/3,1/2,1/3,2/0,2/1,2/2,2/3,3/0,3/1,3/2,3/3}\pattern{scale = 0.5}{3}{1/1,2/3,3/2}{0/0,0/2,0/3,1/2,1/3,2/0,2/1,2/2,2/3,3/0,3/1,3/2,3/3}$ &
\footnotesize{11} & $\pattern{scale = 0.5}{3}{1/1,2/2,3/3}{0/2,0/3,1/1,1/2,1/3,2/0,2/1,2/2,2/3,3/0,3/1,3/2,3/3}\pattern{scale = 0.5}{3}{1/1,2/3,3/2}{0/2,0/3,1/1,1/2,1/3,2/0,2/1,2/2,2/3,3/0,3/1,3/2,3/3}$	  & 
\footnotesize{12} &  $\pattern{scale = 0.5}{3}{1/1,2/2,3/3}{0/0,0/1,1/0,1/1,1/2,1/3, 2/1,2/2,2/3,3/1,3/2}\pattern{scale = 0.5}{3}{1/1,2/3,3/2}{0/0,0/1,1/0,1/1,1/2,1/3, 2/1,2/2,2/3,3/1,3/2}$ \\[5pt]
			\hline
	\end{tabular}
\end{center} 
}
\vspace{-0.5cm}
 	\caption{Easy explainable equidistributions; pairs 2 and 3, 4 and 5, and 7 and 8 have the same distributions by applying the complement and reverse operations to the monotone pattern.}\label{tab-1}
\end{table}

\noindent 
{\footnotesize $\bullet$} For pair 1, only permutation 123 (resp., 132) contains one occurrence of $p_1$ (resp., $p_2$); other permutations avoid the patterns. Hence, $p_1\sim_d p_2$.  \\[-3mm]

\noindent 
{\footnotesize $\bullet$} For pair 2, only $n$-permutations $123\pi_4\ldots\pi_n$  (resp., $132\pi_4\ldots\pi_n$), for $n\geq 3$, contain one occurrence of $p_1$ (resp., $p_2$); other permutations avoid the patterns. Letting $f(123\pi_4\ldots\pi_n)=132\pi_4\ldots\pi_n$ and $f(132\pi_4\ldots\pi_n)=123\pi_4\ldots\pi_n$ and $f(\pi)=\pi$ for any other permutation $\pi$, we obtain a bijective map proving $p_1\sim_d p_2$.   \\[-3mm]

\noindent 
{\footnotesize $\bullet$} Pair 3 is similar to pair 2 with  $123\pi_4\ldots\pi_n$  (resp., $132\pi_4\ldots\pi_n$) replaced by $\pi_1\ldots\pi_{n-3}(n-2)(n-1)n$ (resp., $\pi_1\ldots\pi_{n-3}(n-2)n(n-1)$). We omit the details.  \\[-3mm]

\noindent 
{\footnotesize $\bullet$}  For pair 4,  only $n$-permutations $12\pi_3\ldots\pi_{n-1}n$  (resp., $1n\pi_3\ldots\pi_{n-1}2$), for $n\geq 3$, contain one occurrence of $p_1$ (resp., $p_2$); other permutations avoid the patterns. Letting $f(12\pi_3\ldots\pi_{n-1}n)=1n\pi_3\ldots\pi_{n-1}2$ and $f(1n\pi_3\ldots\pi_{n-1}2)=12\pi_3\ldots\pi_{n-1}n$ and $f(\pi)=\pi$ for any other permutation $\pi$, we obtain a bijective map proving $p_1\sim_d p_2$.  \\[-3mm]

\noindent 
{\footnotesize $\bullet$} Pair 5 is similar to pair 4 with  $12\pi_3\ldots\pi_{n-1}n$  (resp., $1n\pi_3\ldots\pi_{n-1}2$) replaced by $1\pi_2\ldots\pi_{n-2}(n-1)n$ (resp., $1\pi_2\ldots\pi_{n-2}n(n-1)$). We omit the details.  \\[-3mm]

\noindent 
{\footnotesize $\bullet$} For pairs 6, 7 and 8,   each occurrence of $p_1$ (resp., $p_2$) in a permutation $\pi$ is as consecutive elements $\pi_i\pi_{i+1}\pi_{i+2}$ equal to $a(a+1)(a+2)$ (resp., $a(a+2)(a+1)$) for some $a$, where $\pi_i$ is a left-to-right minimum for pairs 6 and 7 and for pairs 6 and 8 and $p_1$, $\pi_{i+2}$ is a right-to-left maximum. Note that two different occurrences of these patterns cannot overlap (otherwise, some elements in an occurrence would be in a shaded box). But then, a bijective map $f$ is described by traversing from left to right in $\pi$ and replacing occurrences $\pi_i\pi_{i+1}\pi_{i+2}$ of $p_1$ or $p_2$ by $\pi_i\pi_{i+2}\pi_{i+1}$. This proves that  $p_1\sim_d p_2$.  \\[-3mm]

\noindent 
{\footnotesize $\bullet$} For pairs 9,  10 and 11, all occurrences of $p_1$ (resp., $p_2$) are $\pi_i\pi_{n-1}\pi_n=\pi_i(n-1)n$ (resp., $\pi_i\pi_{n-1}\pi_n=\pi_in(n-1)$) for some $i$, where for pair 10 (resp., 11), $\pi_i$ is a left-to-right minimum (resp., a right-to-left maximimum in the permutation obtained by removing $n-1$ and $n$). Letting $f(\pi_1\ldots\pi_{n-2}(n-1)n)=\pi_1\ldots\pi_{n-2}n(n-1)$ and $f(\pi_1\ldots\pi_{n-2}n(n-1))=\pi_1\ldots\pi_{n-2}(n-1)n$ and $f(\pi)=\pi$ for any other permutation $\pi$, we obtain a bijective map proving $p_1\sim_d p_2$. 

We conclude this subsection by proving an equidistribution that is not straightforward but still relatively easy to justify.

\begin{thm}\label{oth-1}
We have $\pattern{scale = 0.5}{3}{1/1,2/2,3/3}{0/0,0/1,1/0,1/1,1/2,1/3, 2/1,2/2,2/3,3/1,3/2}\sim_d\hspace{-0.15cm} \pattern{scale = 0.5}{3}{1/1,2/3,3/2}{0/0,0/1,1/0,1/1,1/2,1/3, 2/1,2/2,2/3,3/1,3/2}$.
\end{thm}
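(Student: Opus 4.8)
The plan is to decode the shading into transparent conditions on occurrences, produce a value‑swap that converts each occurrence of $p_1=123$ into an occurrence of $p_2=132$ on the same positions (and conversely), and then assemble all these swaps into a bijection $f$ of $S_n$ that interchanges the two statistics.

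First I would read off the shading. The first column of the picture is entirely shaded, so in any occurrence of either pattern the first two chosen entries occupy consecutive positions $i$ and $i+1$; the second row is entirely shaded, so no entry of $\pi$ lies strictly between the two smallest entries of the occurrence, i.e.\ those two entries are consecutive integers. Working out the remaining boxes, an occurrence of $p_1$ is a triple of positions $(i,i+1,j)$ with $j\ge i+2$ such that $\pi_{i+1}=\pi_i+1<\pi_j$, no entry before position $i$ has value $<\pi_i$ (so $\pi_i$ is a left‑to‑right minimum), every entry in positions $i+2,\dots,j-1$ has value $<\pi_i$, and no entry after position $j$ has value in the interval $(\pi_i+1,\pi_j)$; an occurrence of $p_2$ is a triple $(i,i+1,j)$ with the same positional and ``outside'' requirements except that now $\pi_j=\pi_i+1<\pi_{i+1}$, i.e.\ it is the third entry, not the second, that carries the value $\pi_i+1$. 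Two facts I would record: $j$ is forced to be the first position after $i+1$ whose entry exceeds $\pi_i$, so each ``base'' $(i,i+1)$ carries at most one occurrence; and that occurrence is of type $p_1$ or $p_2$ according as $\pi_{i+1}=\pi_i+1$ or $\pi_{i+1}>\pi_i+1$, so the two types never share a base.

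Next I would set up the local move: on an occurrence $(i,i+1,j)$ of $p_1$ — where the entries at those positions are $(\pi_i,\pi_i+1,\pi_j)$ with $\pi_j>\pi_i+1$ — swap the entries in positions $i+1$ and $j$, so that they become $(\pi_i,\pi_j,\pi_i+1)$, which is an occurrence of $p_2$ on the same positions. This is valid essentially for free: the ``before $i$'' and ``positions $i+2,\dots,j-1$'' conditions are untouched, and the ``after $j$'' condition is literally the same interval $(\pi_i+1,\pi_j)$ for both patterns; the move is an involution and sends $p_2$-occurrences to $p_1$-occurrences in the same way. I would then define $f(\pi)$ to be the result of performing this swap at every occurrence of $\pi$, and I would need to verify that $f$ is a well‑defined involution on $S_n$ that turns every $p_1$-occurrence into a $p_2$-occurrence and vice versa; this immediately yields $p_1\sim_d p_2$, indeed the joint statement.

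The main obstacle is exactly this last verification, because distinct occurrences are in general not position‑disjoint — two occurrences can share their third position, and an occurrence's third position can even be the second position of another occurrence's base (this already happens in $35142$, whose two occurrences of $132$ both involve position $4$) — so the swaps are not independent transpositions. I would handle this by performing the moves one occurrence at a time in a fixed order, say left to right by the base position, and proving the key lemma that each such move flips the type of its own occurrence while leaving the base and the type of every other occurrence unchanged, only possibly relocating the third positions of the others; this rests on a case analysis of how the defining conditions of two occurrences can overlap, using the structural facts above (the two entries of a base are the two smallest entries of a left‑to‑right‑minimal configuration, the ``middle'' entries $i+2,\dots,j-1$ are all smaller than $\pi_i$, and each base carries at most one occurrence of a determined type). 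Granting the lemma, the outcome is independent of the chosen order, $f$ is an involution, and the numbers of $p_1$- and $p_2$-occurrences are swapped, as required.
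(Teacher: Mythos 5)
Your proposal is correct and is essentially the paper's own proof: the same decoding of the shading, the same swap of the entries occupying the second and third positions of an occurrence, performed occurrence by occurrence with bases processed from left to right (i.e.\ over the left-to-right minima), together with the same key claim—left at a comparable level of sketchiness in the paper—that each swap flips its own occurrence while the bases and types of all other occurrences are preserved and none are created or lost. One small imprecision: ``only possibly relocating the third positions of the others'' is not quite the right description (in your own example $35142\mapsto 34152$ the second entry of the other occurrence changes value, not position), but the invariant you actually rely on, preservation of base and type, is exactly what the paper asserts and is what the argument needs.
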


\begin{proof}
Note that any occurrence of the patterns begins with a left-to-right minimum, and any left-to-right minimum can be involved in at most one occurrence of the patterns. Let $\pi=\pi_1\ldots\pi_n \in S_n$. If $\pi_i\pi_j\pi_k$ is an occurrence of $\pattern{scale = 0.5}{3}{1/1,2/2,3/3}{0/0,0/1,1/0,1/1,1/2,1/3, 2/1,2/2,2/3,3/1,3/2}$ (resp., $\pattern{scale = 0.5}{3}{1/1,2/3,3/2}{0/0,0/1,1/0,1/1,1/2,1/3, 2/1,2/2,2/3,3/1,3/2}$), then $j=i+1, \pi_j=\pi_{i}+1$ (resp., $\pi_k=\pi_{i}+1$), and $\pi_j$ and $\pi_k$ are the leftmost and the smallest elements in the North-East area relative to $\pi_i$. Suppose that $x_1>\ldots>x_k$ is the sequence of left-to-right minima in $\pi$.

Replace the occurrence $\pi_i\pi_{i+1}\pi_k$ of $\pattern{scale = 0.5}{3}{1/1,2/2,3/3}{0/0,0/1,1/0,1/1,1/2,1/3, 2/1,2/2,2/3,3/1,3/2}$ (resp., $\pattern{scale = 0.5}{3}{1/1,2/3,3/2}{0/0,0/1,1/0,1/1,1/2,1/3, 2/1,2/2,2/3,3/1,3/2}$), if any, starting at $x_1$, by $\pi_i\pi_k\pi_{i+1}$, which is an occurrence of  $\pattern{scale = 0.5}{3}{1/1,2/3,3/2}{0/0,0/1,1/0,1/1,1/2,1/3, 2/1,2/2,2/3,3/1,3/2}$ (resp., $\pattern{scale = 0.5}{3}{1/1,2/2,3/3}{0/0,0/1,1/0,1/1,1/2,1/3, 2/1,2/2,2/3,3/1,3/2}$). Then do the same for $x_2$, $x_3$, etc. Note that even though an element can be present in more than one occurrence of the patterns, after conducting the swap for $x_i$, no new occurrences of the patterns in $\pi$ can be introduced and no occurrences can be lost (the elements $\pi_{i+1}$ and $\pi_k$ are indistinguishable in $\pi$ for the elements $<x_i$). The procedure described by us is clearly reversible, and it establishes equidistribution of the patterns.

For example, for the permutation $\pi=(10)785942613(11)$ in Figure~\ref{fig-1-1}, the order of occurrences of the two patterns from left to right are $\pi_2\pi_3\pi_5=789, \pi_4\pi_5\pi_8=596, \pi_7\pi_8\pi_{10}=263$, we do the procedure as follows: $\pi=(10)7\uline{8}5\uline{9}42613(11)\xrightarrow{\text{swap 8 \& 9}}(10)795\uline{8}42\uline{6}13(11)\xrightarrow{\text{swap 8 \& 6}}$$(10)795642\uline{8}1\uline{3}(11)$ $\xrightarrow{\text{swap 8 \& 3}}(10)795642318(11)=\pi'$.
Note that the occurrence 789 of $\pattern{scale = 0.5}{3}{1/1,2/2,3/3}{0/0,0/1,1/0,1/1,1/2,1/3, 2/1,2/2,2/3,3/1,3/2}$ in $\pi$ is replaced by the occurrence 798 of $\pattern{scale = 0.5}{3}{1/1,2/3,3/2}{0/0,0/1,1/0,1/1,1/2,1/3, 2/1,2/2,2/3,3/1,3/2}$ in $\pi'$, while the occurrences 596 and 263 of $\pattern{scale = 0.5}{3}{1/1,2/3,3/2}{0/0,0/1,1/0,1/1,1/2,1/3, 2/1,2/2,2/3,3/1,3/2}$ in $\pi$ are replaced by the occurrences 568 and 238 of $\pattern{scale = 0.5}{3}{1/1,2/2,3/3}{0/0,0/1,1/0,1/1,1/2,1/3, 2/1,2/2,2/3,3/1,3/2}$ in $\pi'$. \end{proof}

\begin{figure}
	\begin{center}
		
		\begin{tabular}{ccc}
			
			\begin{tikzpicture}[scale=0.25] 
			\tikzset{    
				grid/.style={      
					draw,      
					step=1cm,      
					gray!100,     
					very thin,      
				}, 
				cell/.style={    
					draw,    
					anchor=center,  
					text centered,    
				},  
				graycell/.style={ 
					fill=gray!40,   
					draw=none,   
					minimum width=1cm, 
					minimum height=1cm,   
					anchor=south west,   
				}
			}  
			
			\draw[grid] (0,0) grid (10,10);  
			%
			\filldraw[black] (0,9) circle (8pt); 
			\draw[thick] (1,6) circle (8pt); \draw[thick] (1,6) circle (2pt); 
			\draw[thick] (2,7) circle (8pt);  \filldraw[white] (2,7) circle (6pt);
			\draw[thick] (3,4) circle (8pt);  \draw[thick] (3,4) circle (2pt);
			\draw[thick] (4,8) circle (8pt);  \filldraw[white] (4,8) circle (6pt);
			\draw[thick] (6,1) circle (8pt);
			\draw[thick] (6,1) circle (2pt);
			\draw[thick] (7,5) circle (8pt);  \filldraw[white] (7,5) circle (6pt);
			\filldraw[black] (8,0) circle (8pt);  
			\draw[thick] (9,2) circle (8pt); \filldraw[white] (9,2) circle (6pt); 
			\filldraw[black] (10,10) circle (8pt);  
			\filldraw[black] (5,3) circle (8pt);  
			
			\end{tikzpicture} 
			
			& \ & 
			\begin{tikzpicture}[scale=0.25] 
			\tikzset{    
				grid/.style={      
					draw,      
					step=1cm,      
					gray!100,     
					very thin,      
				}, 
				cell/.style={    
					draw,    
					anchor=center,  
					text centered,    
				},  
				graycell/.style={ 
					fill=gray!40,   
					draw=none,   
					minimum width=1cm, 
					minimum height=1cm,   
					anchor=south west,   
				}
			}  
			
			\draw[grid] (0,0) grid (10,10);  
			%
			\filldraw[black] (0,9) circle (8pt); 
			\draw[thick] (1,6) circle (8pt); \draw[thick] (1,6) circle (2pt); 
			\draw[thick] (2,8) circle (8pt);  \filldraw[white] (2,8) circle (6pt);
			\draw[thick] (3,4) circle (8pt);  \draw[thick] (3,4) circle (2pt);
			\draw[thick] (4,5) circle (8pt);  \filldraw[white] (4,5) circle (6pt);
			\draw[thick] (6,1) circle (8pt);
			\draw[thick] (6,1) circle (2pt);
			\draw[thick] (7,2) circle (8pt);  \filldraw[white] (7,2) circle (6pt);
			\filldraw[black] (8,0) circle (8pt);  
			\draw[thick] (9,7) circle (8pt); \filldraw[white] (9,7) circle (6pt); 
			\filldraw[black] (10,10) circle (8pt);  
			\filldraw[black] (5,3) circle (8pt);  
			
						\end{tikzpicture}
		\end{tabular}
	\end{center}
	
	\vspace{-0.5cm}
	
	\caption{Permutations $\pi=(10)785942613(11)$  (to the left) and $\pi'=(10)795642318(11)$ (to the right) illustrating the proof of Corollary~\ref{oth-1}. The circled dots represent the first elements of occurrences (certain left-to-right minima in $\pi$), and the white dots represent the second and third elements of occurrences of $p_1$ or $p_2$.}\label{fig-1-1}
\end{figure}
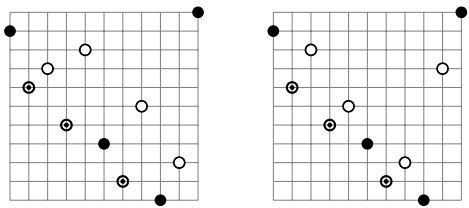

\subsection{Cases justifiable by the complement operation}\label{compl-sec}

Occurrences of any pattern in Table~\ref{tab-2} in a permutation $\pi$ begin with $\pi_1=1$, and removing 1 from the pattern results in a symmetrically shaded pattern with respect to a horizontal line, allowing us to apply the complement operation. Specifically, letting $f(\pi)=\pi$ for any $\pi$ with $\pi_1\neq 1$, and $f(1\pi_2\ldots\pi_n)=1(\pi_2\ldots\pi_n)^c=1\pi^c_2\ldots\pi^c_n$, where the complement is taken on the set $\{2,\ldots,n\}$ (for example, $(2435)^c=5342$) gives a bijective map proving equidistribution of any pair in  Table~\ref{tab-2}, because $1\pi_i\pi_j$ is an occurrence of $p_1$ if and only if $1\pi^c_i\pi^c_j$ is an occurrence of $p_2$.

\begin{table}[t]
 	{
 		\renewcommand{\arraystretch}{1.5}
 \begin{center} 
 		\begin{tabular}{|c|c||c|c||c|c|}
 			\hline
 			
 	\footnotesize{nr.}	& {\footnotesize patterns}  & \footnotesize{nr.}	& {\footnotesize patterns}  & \footnotesize{nr.}	& {\footnotesize patterns}  \\ 
 			\hline
 			\hline	
\footnotesize{13}  & $\pattern{scale = 0.5}{3}{1/1,2/2,3/3}{0/0,0/1,0/2,0/3,1/0,2/0,3/0}\pattern{scale = 0.5}{3}{1/1,2/3,3/2}{0/0,0/1,0/2,0/3,1/0,2/0,3/0}$	  & 
\footnotesize{14}  &  $\pattern{scale = 0.5}{3}{1/1,2/2,3/3}{0/0,0/1,0/2,0/3,1/0,2/0,2/2,3/0}\pattern{scale = 0.5}{3}{1/1,2/3,3/2}{0/0,0/1,0/2,0/3,1/0,2/0,2/2,3/0}$	& 
\footnotesize{15}  &  $\pattern{scale = 0.5}{3}{1/1,2/2,3/3}{0/0,0/1,0/2,0/3,1/0,1/1,1/3,2/0,3/0,3/1,3/3}\pattern{scale = 0.5}{3}{1/1,2/3,3/2}{0/0,0/1,0/2,0/3,1/0,1/1,1/3,2/0,3/0,3/1,3/3}$	 \\[5pt]
 			\hline	
\footnotesize{16}  & $\pattern{scale = 0.5}{3}{1/1,2/2,3/3}{0/0,0/1,0/2,0/3,1/0,1/2,2/0,2/1,2/3,3/0,3/2}\pattern{scale = 0.5}{3}{1/1,2/3,3/2}{0/0,0/1,0/2,0/3,1/0,1/2,2/0,2/1,2/3,3/0,3/2}$	  & 
\footnotesize{17}  &  $\pattern{scale = 0.5}{3}{1/1,2/2,3/3}{0/0,0/1,0/2,0/3,1/0,1/1,1/3,2/0,2/2,3/0,3/1,3/3}\pattern{scale = 0.5}{3}{1/1,2/3,3/2}{0/0,0/1,0/2,0/3,1/0,1/1,1/3,2/0,2/2,3/0,3/1,3/3}$	& 
\footnotesize{18}  &  $\pattern{scale = 0.5}{3}{1/1,2/2,3/3}{0/0,0/1,0/2,0/3,1/0,1/2,2/0,2/1,2/2,2/3,3/0,3/2}\pattern{scale = 0.5}{3}{1/1,2/3,3/2}{0/0,0/1,0/2,0/3,1/0,1/2,2/0,2/1,2/2,2/3,3/0,3/2}$	 \\[5pt]
			\hline
	\end{tabular}
\end{center} 
}
\vspace{-0.5cm}
 	\caption{Equidistributions explainable by the complement operation.}\label{tab-2}
\end{table}

\subsection{Reduction to known equidistribution results}\label{reduction-sec}

\begin{table}[t]
 	{
 		\renewcommand{\arraystretch}{1.5}
 \begin{center} 
 		\begin{tabular}{|c|c|c|c||c|c|c|c|}
 			\hline
 			
 	\footnotesize{nr.}	& {\footnotesize patterns}  & \footnotesize{nr.}	& {\footnotesize patterns}  & \footnotesize{nr.}	& {\footnotesize patterns} & \footnotesize{nr.}	& {\footnotesize patterns}  \\ 
 			\hline
 			\hline	
\footnotesize{19}  & $\pattern{scale = 0.5}{3}{1/1,2/2,3/3}{0/0,0/1,0/2,0/3,1/0,1/1,1/2,2/1,2/2,2/0,3/0}\pattern{scale = 0.5}{3}{1/1,2/3,3/2}{0/0,0/1,0/2,0/3,1/0,1/1,1/2,2/1,2/2,2/0,3/0}$  & 
\footnotesize{20}  &  $\pattern{scale = 0.5}{3}{1/1,2/2,3/3}{0/0,0/1,0/2,0/3,1/0,2/0,2/2,2/3,3/0,3/2,3/3}\pattern{scale = 0.5}{3}{1/1,2/3,3/2}{0/0,0/1,0/2,0/3,1/0,2/0,2/2,2/3,3/0,3/2,3/3}$	& 
\footnotesize{21}  &  $\pattern{scale = 0.5}{3}{1/1,2/2,3/3}{0/0,0/1,0/2,0/3,1/0,1/1,1/2,2/0,2/1,2/2,3/0,3/3}\pattern{scale = 0.5}{3}{1/1,2/3,3/2}{0/0,0/1,0/2,0/3,1/0,1/1,1/2,2/0,2/1,2/2,3/0,3/3}$	&
\footnotesize{22}  &  $\pattern{scale = 0.5}{3}{1/1,2/2,3/3}{0/0,0/1,0/2,0/3,1/0,1/1,2/0,2/2,2/3,3/0,3/2,3/3}\pattern{scale = 0.5}{3}{1/1,2/3,3/2}{0/0,0/1,0/2,0/3,1/0,1/1,2/0,2/2,2/3,3/0,3/2,3/3}$ \\[5pt]
			\hline
	\end{tabular}
\end{center} 
}
\vspace{-0.5cm}
 	\caption{Equidistributions explainable by reduction to known results; pairs 19 and 20 (resp., 21 and 22) have the same distributions.}\label{tab-3}
\end{table}

In this subsection, we explain equidistributions in Table~\ref{tab-3}, which cannot be justified using the complement operation, as done in Section~\ref{compl-sec}. The following theorem  pertains to patterns nr.\ 64 and 65 in \cite{KZ}.

\begin{thm}[\cite{KZ}]\label{KZ-pat-length-2} We have $\pattern{scale = 0.5}{2}{1/1,2/2}{0/0,0/1,1/0,1/1,2/2}\sim_d \hspace{-0.15cm} \pattern{scale = 0.5}{2}{1/1,2/2}{0/1,0/2,1/1,1/2,2/0}$. \end{thm}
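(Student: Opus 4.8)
The plan is to recast the statement as an equidistribution of two transparent permutation statistics and prove that by induction on $n$.

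\medskip\noindent\emph{Step 1: characterizing occurrences.}
Let $\pi=\pi_1\cdots\pi_n\in S_n$. For the first pattern, an occurrence at positions $i<j$ (so $\pi_i<\pi_j$) requires every entry to the left of position $j$ other than $\pi_i$ to exceed $\pi_j$, and every entry to the right of position $j$ to be smaller than $\pi_j$. Hence $\pi_j$ is a right-to-left maximum, exactly one entry before position $j$ is smaller than $\pi_j$, which forces $\pi_j=n+2-j$, and then $i$ is determined. Consequently the number of occurrences of $\pattern{scale = 0.5}{2}{1/1,2/2}{0/0,0/1,1/0,1/1,2/2}$ in $\pi$ equals
\[
a(\pi):=\#\{\,j:\ \pi_j\ \text{is a right-to-left maximum and}\ \pi_j=n+2-j\,\}.
\]
Symmetrically, an occurrence of $\pattern{scale = 0.5}{2}{1/1,2/2}{0/1,0/2,1/1,1/2,2/0}$ at $i<j$ requires every entry before position $j$ other than $\pi_i$ to be smaller than $\pi_i$, and every entry after position $j$ to exceed $\pi_i$; this is equivalent to $\{\pi_1,\dots,\pi_{j-1}\}=\{1,\dots,j-1\}$ (with $i$ the position of the value $j-1$), and each such $j$ contributes exactly one occurrence. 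So the number of occurrences of the second pattern equals
\[
b(\pi):=\#\{\,m:\ 1\le m\le n-1,\ \{\pi_1,\dots,\pi_m\}=\{1,\dots,m\}\,\},
\]
that is, one less than the number of blocks into which $\pi$ decomposes as a direct sum. Thus it suffices to prove that $a$ and $b$ are equidistributed on $S_n$.

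\medskip\noindent\emph{Step 2: matching the statistics.}
On the $b$-side this is classical: splitting off the first sum-indecomposable block $\beta$ (of size $s$), so $\pi=\beta\oplus\rho$ with $\rho$ arbitrary of size $n-s$, one gets $b(\pi)=0$ when $s=n$ and $b(\pi)=b(\rho)+1$ otherwise; hence $B_n(x):=\sum_{\pi\in S_n}x^{b(\pi)}$ satisfies $B_n(x)=c_n+x\sum_{s=1}^{n-1}c_s\,B_{n-s}(x)$, where $c_s$ denotes the number of sum-indecomposable permutations of $\{1,\dots,s\}$. The goal is then to prove the \emph{same} recursion for $A_n(x):=\sum_{\pi\in S_n}x^{a(\pi)}$; equivalently, to build a bijection $\phi\colon S_n\to S_n$ with $a(\pi)=b(\phi(\pi))$ guided by the analogous decomposition. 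A natural starting point is to peel off the largest entry $n$: if $n$ is in position $1$, deleting it preserves $a$ (the right-to-left maxima to its right are untouched and the equation $\pi_j=n+2-j$ rescales correctly), and if $n$ is in position $2$, deleting it kills exactly the single contribution at position $2$ while leaving the others intact, so $a$ drops by $1$. (Passing to $\pi^{r}$ and peeling off its first entry yields the same two reductions.) It then remains to treat the permutations in which $n$ lies in a position $\ge 3$ and to arrange their contribution so that the factor $c_s$ emerges.

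\medskip\noindent\emph{Step 3: the main obstacle.}
This last case is where the work lies. When the maximum sits in the interior of $\pi$, all contributing positions lie to its right, but deleting it and restandardizing shifts the smaller values and can \emph{create} new right-to-left maxima solving the anti-diagonal equation of the shortened permutation---entries whose would-be contribution was previously blocked precisely by the large entry standing to their left. Hence $a$ is not preserved under the obvious deletion map, and there is no literal analogue of a sum-component for $a$. The crux is to pin down the correct block to strip off in this situation---one witnessing the recursion $A_n(x)=c_n+x\sum_{s=1}^{n-1}c_s\,A_{n-s}(x)$, equivalently driving the recursive construction of $\phi$---after which the base case $n\le 1$ together with induction completes the proof.
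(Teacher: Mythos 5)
Your Step 1 is correct and is a genuinely nice reformulation: occurrences of the first pattern are exactly the positions $j$ with $\pi_j=n+2-j$ and $\pi_j$ a right-to-left maximum, occurrences of the second are exactly the proper prefixes with $\{\pi_1,\dots,\pi_m\}=\{1,\dots,m\}$, so the theorem is equivalent to the equidistribution of your statistics $a$ and $b$. The recursion $B_n(x)=c_n+x\sum_{s=1}^{n-1}c_s B_{n-s}(x)$ is right, as are the two deletion facts on the $a$-side (maximum in position $1$ preserves $a$; maximum in position $2$ lowers it by exactly $1$). But this is where the argument stops. The whole content of the theorem is the claim that $A_n(x)$ satisfies the same recursion (equivalently, that the bijection $\phi$ exists), and Step 3 does not prove it: it names the obstruction and then asserts that ``the crux is to pin down the correct block to strip off'' without exhibiting such a block, without showing its removal lowers $a$ by exactly one, and without showing the block ranges over arbitrary indecomposable permutations of arbitrary size $s$ (which is what the factor $c_s$ demands; single-entry deletions cannot by themselves produce a convolution of that shape). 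So what you have is a correct translation plus a plan, not a proof. A small factual slip in the description of the obstacle: when the maximum $n$ sits in position $p\ge 3$ and is deleted, the newly contributing entries come from positions to the \emph{left} of $p$ (they were blocked by $n$ standing to their \emph{right}, and they satisfy the shifted equation $\pi_q=n+1-q$); e.g.\ $\pi=1342\mapsto 132$ raises $a$ from $1$ to $2$. Entries to the right of $p$ keep both their value condition and their right-to-left-maximum status, so they are not the source of the trouble.

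For comparison: the paper does not prove this statement internally but quotes it from \cite{KZ}, so your route (a component-type recursion for the statistic $a$) is in any case different from the source. If you want to close the gap with tools already in this paper, note that the swap procedure proving Corollary~\ref{two-pat-length-2} gives a bijection exchanging occurrences of the first pattern with occurrences of the pattern $21$ carrying the same shading, and the complement map carries that $21$-pattern to the second pattern of the present theorem; composing the two gives the desired equidistribution without any recursion. As it stands, however, your proposal has a genuine gap at its main step.
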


The following corollary follows from Theorem~\ref{KZ-pat-length-2} by noting that the complement of $\pattern{scale = 0.5}{2}{1/2,2/1}{0/0,0/1,1/0,1/1,2/2}$ is $\hspace{-0.1cm} \pattern{scale = 0.5}{2}{1/1,2/2}{0/1,0/2,1/1,1/2,2/0}$. However, for our purposes, we need a different proof that establishes the joint equidistribution of the patterns in question. The procedure of moving elements described by us is equivalent, when applying reverse and complement, to the procedure described in the proof of Theorem~\ref{9-box-rs} below (in particular, instead of replacing elements while going from left to right, in the next proof we go from right to left). The justifications that everything works are essentially the same as in the proof of Theorem~\ref{9-box-rs} and are therefore omitted.

\begin{coro}\label{two-pat-length-2} We have $\pattern{scale = 0.5}{2}{1/1,2/2}{0/0,0/1,1/0,1/1,2/2}\sim_d \hspace{-0.15cm} \pattern{scale = 0.5}{2}{1/2,2/1}{0/0,0/1,1/0,1/1,2/2}$. \end{coro}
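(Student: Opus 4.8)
The plan is to build a self-contained bijection $\phi\colon S_n\to S_n$ that carries occurrences of $\pattern{scale = 0.5}{2}{1/1,2/2}{0/0,0/1,1/0,1/1,2/2}$ to occurrences of $\pattern{scale = 0.5}{2}{1/2,2/1}{0/0,0/1,1/0,1/1,2/2}$ and conversely. On the level of plain equidistribution the statement is immediate from Theorem~\ref{KZ-pat-length-2}: the complement of $\pattern{scale = 0.5}{2}{1/2,2/1}{0/0,0/1,1/0,1/1,2/2}$ is the second pattern there, the first pattern there is $\pattern{scale = 0.5}{2}{1/1,2/2}{0/0,0/1,1/0,1/1,2/2}$, and complementation preserves the distribution of any single pattern. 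But composing the bijection of Theorem~\ref{KZ-pat-length-2} with complementation only produces a map sending the count of the first pattern to the count of the second; there is no reason it simultaneously does the reverse, which is exactly what the joint statement needs. So a direct construction is required.

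The first step is to read a rigid structure off the five shaded boxes. I expect the following. If $\pi_a\pi_b$ with $a<b$ is an occurrence of $\pattern{scale = 0.5}{2}{1/1,2/2}{0/0,0/1,1/0,1/1,2/2}$, then counting the entries below $\pi_b$ forces $\pi_b=n-b+2$; moreover the positions $\{1,\dots,b-1\}\setminus\{a\}$ carry exactly the $b-2$ largest values, position $a$ carries some value $<\pi_b$, and positions $b+1,\dots,n$ carry the remaining values $\le n-b+1$. Conversely, a column $b$ supports such an occurrence precisely when $\pi_b=n-b+2$ and exactly one of $\pi_1,\dots,\pi_{b-1}$ is smaller than $\pi_b$, and then $a$ is that unique position. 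Dually, $\pi_a\pi_b$ with $a<b$ is an occurrence of $\pattern{scale = 0.5}{2}{1/2,2/1}{0/0,0/1,1/0,1/1,2/2}$ precisely when $\{\pi_1,\dots,\pi_{b-1}\}$ equals the set of the $b-1$ largest values; then $\pi_a=n-b+2$ (so $a$ is the position of that value among the first $b-1$) and $\pi_b$ is automatically smaller. Hence an occurrence of either pattern is pinned down by its pivot column $b$, and the only difference between a $\pattern{scale = 0.5}{2}{1/1,2/2}{0/0,0/1,1/0,1/1,2/2}$-situation and a $\pattern{scale = 0.5}{2}{1/2,2/1}{0/0,0/1,1/0,1/1,2/2}$-situation at column $b$ is whether the value $n-b+2$ sits at position $b$ or at the position $a<b$ of the unique entry smaller than it.

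The second step is to define $\phi$ by scanning the columns $b=n,n-1,\dots,1$ and, whenever the current permutation has an occurrence of one of the two patterns with pivot $b$, transposing the entries in positions $a$ and $b$, with $a$ as above. Such a transposition turns an occurrence of one pattern at column $b$ into an occurrence of the other at the same column. The crucial claim — which I expect to be the only real work — is that this transposition changes neither the type nor the occurrence status of any other column $b'$. Indeed, whether $b'$ is a pivot depends only on $\pi_{b'}$ and on the set $\{\pi_1,\dots,\pi_{b'-1}\}$; for $b'>b$ the transposition merely permutes values within the first $b'-1$ positions, leaving that set and $\pi_{b'}$ alone; for $b'\le a$ it touches nothing that enters the status of $b'$ (note $a$ itself can never be a pivot of the first pattern, and its status with respect to the second pattern depends only on $\pi_1,\dots,\pi_{a-1}$); and for $a<b'<b$ the value entering position $a$ and the value leaving it are both smaller than $n-b'+2$, hence neither lies among the $b'-1$ largest values and both are counted as smaller than $\pi_{b'}$ when $\pi_{b'}=n-b'+2$, so the set and the count that decide $b'$'s status are unchanged. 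Granting this, processing the columns right to left flips every occurrence of one pattern into an occurrence of the other with no collisions, so $\phi$ interchanges the two statistics; the same bookkeeping shows that rerunning the procedure restores $\pi$, so $\phi$ is an involution. Up to reverse and complement this is precisely the argument given later for Theorem~\ref{9-box-rs}, and in the write-up I would point to that analysis instead of repeating it.
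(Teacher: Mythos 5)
Your construction is essentially the paper's own proof: you scan the positions of second elements from right to left and swap each occurrence with its (uniquely determined) first element, check that such a swap flips exactly that occurrence while leaving the status and type of every other column unchanged, and defer the remaining verification to the analysis of Theorem~\ref{9-box-rs}, exactly as the paper does; your explicit pivot-column criterion ($\pi_b=n-b+2$ with a unique smaller prefix entry, resp.\ prefix equal to the $b-1$ largest values) is just a sharpened form of the paper's observations and your case analysis for non-interference is correct. The one point where you go beyond the paper is the unproved claim that the map is an involution ("the same bookkeeping shows" is not shown, and note that a swap at a pivot can relocate the first element of an occurrence at an already-processed larger pivot, so this is not automatic); what is actually needed is only invertibility, which your status-preservation argument already yields by undoing the swaps at the pivot columns in increasing order (this is the paper's "reversibility" claim), so no essential step is missing.
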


\begin{proof} Note that any occurrence of the patterns begins with a left-to-right minimum, and occurrences of $\pattern{scale = 0.5}{2}{1/2,2/1}{0/0,0/1,1/0,1/1,2/2}$ can only be formed by (not necessarily all) consecutive left-to-right minima. Also, any element can be the second element in at most one occurrence of a pattern. Finally, the second element in an occurrence of  $\pattern{scale = 0.5}{2}{1/1,2/2}{0/0,0/1,1/0,1/1,2/2}$ must be a right-to-left maximum. 

Let $\pi_{i_1}\pi_{i_2}\ldots\pi_{i_s}$ be the subsequence of a permutation $\pi=\pi_1\ldots\pi_n$ that includes all elements in all occurrences of the patterns. In the procedure we are about to describe, we will permute the elements of $\pi_{i_1}\pi_{i_2}\ldots\pi_{i_s}$ in a certain way while keeping the positions in $\pi$ occupied by these elements unchanged. The permutation $\pi'$ obtained from $\pi$ by permuting $\pi_{i_1}\pi_{i_2}\ldots\pi_{i_s}$ will have the desired properties.  

Consider the unique occurrence $\pi_{i_x}\pi_{i_s}$ of a pattern that ends with $\pi_{i_s}$ and swap $\pi_{i_x}$ and $\pi_{i_s}$ in $\pi$. Such a swap will change exactly one occurrence of a pattern to an occurrence of the other pattern, without introducing or removing any other occurrences of the patterns. Next, consider $\pi^{\star}=\pi^{\star}_1\ldots \pi^{\star}_n$ obtained from $\pi$ by replacing $\pi_{i_x}$ with $\pi_{i_s}$, and suppose $i_j<i_s$ is largest such that $\pi^{\star}_{i_j}$ is the second element in an occurrence $\pi^{\star}_{i_t}\pi^{\star}_{i_j}$ of a pattern. Swap  $\pi^{\star}_{i_t}$ and $\pi^{\star}_{i_j}$. Now, consider $\pi^{\star\star}=\pi^{\star\star}_1\ldots\pi^{\star\star}_n$ obtained from $\pi^{\star}$ by replacing $\pi^{\star}_{i_t}$ and $\pi^{\star}_{i_j}$, and suppose $i_k<i_j$ is largest such that $\pi^{\star\star}_{i_k}$ is the second element in an occurrence $\pi^{\star\star}_{i_{\ell}}\pi^{\star\star}_{i_k}$ of a pattern. Swap  $\pi^{\star\star}_{i_{\ell}}$ and $\pi^{\star\star}_{i_k}$. And so on. Eventually, all occurrences of the patterns in $\pi$ will be swapped, which proves their (joint) equidistribution. 

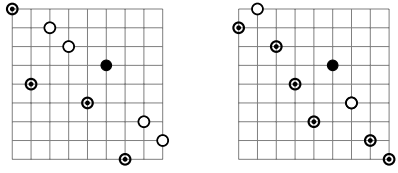
\begin{figure}
	\begin{center}
		
		\begin{tabular}{ccc}
			
			\begin{tikzpicture}[scale=0.25] 
			\tikzset{    
				grid/.style={      
					draw,      
					step=1cm,      
					gray!100,     
					very thin,      
				}, 
				cell/.style={    
					draw,    
					anchor=center,  
					text centered,    
				},  
				graycell/.style={ 
					fill=gray!40,   
					draw=none,   
					minimum width=1cm, 
					minimum height=1cm,   
					anchor=south west,   
				}
			}  
			
			\draw[grid] (0,0) grid (8,8);  
			%
			\draw[thick] (0,8) circle (8pt); \draw[thick] (0,8) circle (2pt); 
			\draw[thick] (1,4) circle (8pt); \draw[thick] (1,4) circle (2pt); 
			\draw[thick] (2,7) circle (8pt);  \filldraw[white] (2,7) circle (6pt);
			\draw[thick] (3,6) circle (8pt);  \filldraw[white] (3,6) circle (6pt);
			\filldraw[black] (5,5) circle (8pt);
			\draw[thick] (4,3) circle (8pt);  \draw[thick] (4,3) circle (2pt);
			\draw[thick] (6,0) circle (8pt);  \draw[thick] (6,0) circle (2pt);
			\draw[thick] (7,2) circle (8pt); \filldraw[white] (7,2) circle (6pt); 
			\draw[thick] (8,1) circle (8pt); \filldraw[white] (8,1) circle (6pt); 
			
			\end{tikzpicture} 
			
			& \ & 
			\begin{tikzpicture}[scale=0.25] 
			\tikzset{    
				grid/.style={      
					draw,      
					step=1cm,      
					gray!100,     
					very thin,      
				}, 
				cell/.style={    
					draw,    
					anchor=center,  
					text centered,    
				},  
				graycell/.style={ 
					fill=gray!40,   
					draw=none,   
					minimum width=1cm, 
					minimum height=1cm,   
					anchor=south west,   
				}
			}  
			
			\draw[grid] (0,0) grid (8,8);  
			%
			\draw[thick] (0,7) circle (8pt); \draw[thick] (0,7) circle (2pt); 
			\draw[thick] (1,8) circle (8pt);  \filldraw[white] (1,8) circle (6pt);
			\draw[thick] (2,6) circle (8pt);  \draw[thick] (2,6) circle (2pt);
			\draw[thick] (3,4) circle (8pt);  \draw[thick] (3,4) circle (2pt);
			\draw[thick] (4,2) circle (8pt); \draw[thick] (4,2) circle (2pt); 
			\draw[thick] (6,3) circle (8pt);  \filldraw[white] (6,3) circle (6pt);
			\filldraw[black] (5,5) circle (8pt);
			\draw[thick] (6,3) circle (8pt);  \filldraw[white] (6,3) circle (6pt);
			\draw[thick] (7,1) circle (8pt); \draw[thick] (7,1) circle (2pt); 
			\draw[thick] (8,0) circle (8pt); \draw[thick] (8,0) circle (2pt); 
			
						\end{tikzpicture}
		\end{tabular}
	\end{center}
	
	\vspace{-0.5cm}
	
	\caption{Permutations $\pi= 958746132$  (to the left) and $\pi'=897536421$ (to the right) illustrating the proof of Corollary~\ref{two-pat-length-2}. The circled dots represent the left-to-right minima in $\pi$, and the white dots represent the second elements of occurrences of $p_1$ or $p_2$.}\label{fig-1-2}
\end{figure}

For example, for the permutation $\pi$ in Figure~\ref{fig-1-2}, we do the procedure as follows: $\pi=958746\uline{1}3\uline{2}\xrightarrow{\text{swap 1 \& 2}}958746\uline{2}\uline{3}1\xrightarrow{\text{swap 2 \& 3}} 9587\uline{4}6\uline{3}21\xrightarrow{\text{swap 3 \& 4}}9\uline{5}8\uline{7}36421\xrightarrow{\text{swap 5 \& 7}} 9\uline{78}536421\xrightarrow{\text{swap 7 \& 8}} \uline{98}7536421\xrightarrow{\text{swap 8 \& 9}} 897536421$, so that $\pi'= 897536421$.

Note that the occurrences 58, 57, 13 and 12 of $\pattern{scale = 0.5}{2}{1/1,2/2}{0/0,0/1,1/0,1/1,2/2}$ in $\pi$ are replaced, respectively, by the occurrence 87, 73, 32 and 21 of $\pattern{scale = 0.5}{2}{1/2,2/1}{0/0,0/1,1/0,1/1,2/2}$ in $\pi'$, while the occurrences 95 and 41 of $\pattern{scale = 0.5}{2}{1/2,2/1}{0/0,0/1,1/0,1/1,2/2}$ in $\pi$ are replaced by the occurrences 89 and 34 of $\pattern{scale = 0.5}{2}{1/1,2/2}{0/0,0/1,1/0,1/1,2/2}$ in $\pi'$. \end{proof}

Let $T_{n,k}$ be the number of $n$-permutations with $k$ occurrences of $\pattern{scale = 0.5}{2}{1/1,2/2}{0/0,0/1,1/0,1/1}$. Theorem 4.1 in \cite{KZ} states that 	
\begin{align}
	T_{n,k} = & \, T_{n-1,k-1}+ (n-1)T_{n-1,k}  \label{rec-rel-8-9}
	\end{align}
	with  the initial conditions $T_{n,0}=(n-1)!$ for $n\geq 1$ and $T_{0,0}=1$,  which shows that $T_{n,k}=C(n,k+1)$, the \textit{unsigned Stirling number of the first kind}. The next theorem is proved by showing that the distribution of the pattern $\pattern{scale = 0.5}{2}{1/2,2/1}{0/0,0/1,1/0,1/1}$ is given by $T_{n,k}$.	 Note that the following theorem can also be proved by copying and pasting the proof of Corollary~\ref{two-pat-length-2}, which demonstrates the joint equidistribution of the patterns. However, we provide an alternative proof in the style of the respective proofs in \cite{KZ}.
	
\begin{thm}\label{equdis-2-patterns-length-2} We have $\pattern{scale = 0.5}{2}{1/1,2/2}{0/0,0/1,1/0,1/1}\sim_d \hspace{-0.15cm} \pattern{scale = 0.5}{2}{1/2,2/1}{0/0,0/1,1/0,1/1}$.\end{thm}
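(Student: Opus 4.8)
The plan is to determine the distribution of $\pattern{scale = 0.5}{2}{1/2,2/1}{0/0,0/1,1/0,1/1}$ directly and recognize it as $T_{n,k}$; since the array $T_{n,k}$ is pinned down by the recurrence~\eqref{rec-rel-8-9} together with its initial conditions, this yields $\pattern{scale = 0.5}{2}{1/1,2/2}{0/0,0/1,1/0,1/1}\sim_d\pattern{scale = 0.5}{2}{1/2,2/1}{0/0,0/1,1/0,1/1}$ at once. Write $T'_{n,k}$ for the number of $\pi\in S_n$ with exactly $k$ occurrences of $\pattern{scale = 0.5}{2}{1/2,2/1}{0/0,0/1,1/0,1/1}$.

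First I would pin down what an occurrence of $\pattern{scale = 0.5}{2}{1/2,2/1}{0/0,0/1,1/0,1/1}$ looks like. If $\pi_a\pi_b$ (with $a<b$) is such an occurrence, then $\pi_a>\pi_b$, and the four shaded boxes $\boks{0}{0},\boks{0}{1},\boks{1}{0},\boks{1}{1}$ jointly forbid any element occupying a position $<b$, other than $\pi_a$ itself, from having value smaller than $\pi_a$. Equivalently, $\pi_a$ is a left-to-right minimum and every element strictly between positions $a$ and $b$ exceeds $\pi_a$; together with $\pi_a>\pi_b$ this says precisely that $\pi_a$ and $\pi_b$ are consecutive left-to-right minima of $\pi$ (both are left-to-right minima, with none in between). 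Conversely, any two consecutive left-to-right minima form such an occurrence. Consequently, a permutation with $m\ge 1$ left-to-right minima has exactly $m-1$ occurrences of $\pattern{scale = 0.5}{2}{1/2,2/1}{0/0,0/1,1/0,1/1}$, so for $n\ge 1$ the quantity $T'_{n,k}$ counts the $\pi\in S_n$ with exactly $k+1$ left-to-right minima, while $T'_{0,0}=1$.

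Next I would verify that $T'_{n,k}$ satisfies~\eqref{rec-rel-8-9} with the stated initial conditions, via the classical ``position of the maximum'' argument for left-to-right minima. Given $\pi\in S_n$, consider the position of $n$. If $n=\pi_1$, then $n$ is a left-to-right minimum and deleting it leaves the left-to-right-minimum status of every other entry unchanged, so $\pi\mapsto\pi_2\cdots\pi_n$ is a bijection from $\{\pi\in S_n:\pi_1=n\}$ onto $S_{n-1}$ that decreases the number of left-to-right minima, hence the number of occurrences, by one; this accounts for the term $T'_{n-1,k-1}$. If $n$ is not in position $1$, then $\pi_1<n$, so $n$ is not a left-to-right minimum, and, $n$ being the largest entry, deleting it changes no other entry's status; reinserting $n$ into any of the positions $2,3,\dots,n$ of a permutation of $[n-1]$ with $k$ occurrences recovers all such $\pi$, giving the term $(n-1)T'_{n-1,k}$. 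For the initial conditions, $T'_{0,0}=1$, and for $n\ge 1$ the permutations with no occurrence are exactly those with a unique left-to-right minimum, i.e.\ those with $\pi_1=1$, of which there are $(n-1)!$. Therefore $T'_{n,k}=T_{n,k}$ for all $n,k\ge 0$.

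The main obstacle is really just the occurrence characterization of the first step: one has to read off carefully, from the four shaded boxes, that no smaller value may sit anywhere to the left of $\pi_b$ apart from $\pi_a$, and then translate this into the ``consecutive left-to-right minima'' description. Once that is done, everything reduces to the textbook recurrence for the unsigned Stirling numbers of the first kind and routine bookkeeping. (As noted in the text, one could alternatively copy the swapping bijection from the proof of Corollary~\ref{two-pat-length-2}, which would also give the joint equidistribution; I use the recurrence here to keep the argument in the spirit of~\cite{KZ}.)
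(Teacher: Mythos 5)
Your proposal is correct and follows essentially the same route as the paper: both arguments show that the counting sequence $T'_{n,k}$ for the pattern $21$ with the four lower-left boxes shaded satisfies the recurrence~(\ref{rec-rel-8-9}) with the same initial conditions, via the insertion of the largest element $n$ either at the front (creating one new occurrence) or elsewhere (creating none). The only difference is that you make explicit the characterization of occurrences as pairs of consecutive left-to-right minima before running the insertion argument, whereas the paper reasons directly about the shaded boxes; this is a presentational refinement rather than a different method.
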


\begin{proof} 
Let $T'_{n,k}$ be the number of $n$-permutations with $k$ occurrences of $\pattern{scale = 0.5}{2}{1/2,2/1}{0/0,0/1,1/0,1/1}$. A permutation counted by $T'_{n,k}$ can either be obtained by inserting the new largest element $n$ 
\begin{itemize}
\item to the left of a permutation counted by $T'_{n-1,k-1}$ (which clearly results in one extra occurrence of $\pattern{scale = 0.5}{2}{1/2,2/1}{0/0,0/1,1/0,1/1}$), or
\item any other but the leftmost position in a permutation $\pi'$ counted by  $T'_{n-1,k}$, which cannot result in a new occurrence of $\pattern{scale = 0.5}{2}{1/2,2/1}{0/0,0/1,1/0,1/1}$ because either $n$ is in the rightmost possible position, or the leftmost element in $\pi'$ is in the shaded area.  
\end{itemize}
Hence, $T'_{n,k}$ satisfies the same recursion as  (\ref{rec-rel-8-9}). A similar argument shows that $T'_{n,0}=(n-1)T'_{n-1,0}$ because inserting $n$ in any except the leftmost position in an $(n-1)$-permutation avoiding $\pattern{scale = 0.5}{2}{1/2,2/1}{0/0,0/1,1/0,1/1}$ results in an $n$-permutation also avoiding $\pattern{scale = 0.5}{2}{1/2,2/1}{0/0,0/1,1/0,1/1}$. Since, $T'_{0,0}=1$, we have that $T'_{n,k}=T_{n,k}$ for all $n,k\geq 0$, proving that $\pattern{scale = 0.5}{2}{1/1,2/2}{0/0,0/1,1/0,1/1}\sim_d \hspace{-0.15cm} \pattern{scale = 0.5}{2}{1/2,2/1}{0/0,0/1,1/0,1/1}$. \end{proof}

Occurrences of any pattern in Table~\ref{tab-3} in a permutation $\pi$ begin with $\pi_1=1$. Removing 1 from such a pattern results in a mesh pattern of length~2. But then, equidistribution of $\pattern{scale = 0.5}{3}{1/1,2/2,3/3}{0/0,0/1,0/2,0/3,1/0,1/1,1/2,2/1,2/2,2/0,3/0}\pattern{scale = 0.5}{3}{1/1,2/3,3/2}{0/0,0/1,0/2,0/3,1/0,1/1,1/2,2/1,2/2,2/0,3/0}$  (resp., $\pattern{scale = 0.5}{3}{1/1,2/2,3/3}{0/0,0/1,0/2,0/3,1/0,1/1,1/2,2/0,2/1,2/2,3/0,3/3}\pattern{scale = 0.5}{3}{1/1,2/3,3/2}{0/0,0/1,0/2,0/3,1/0,1/1,1/2,2/0,2/1,2/2,3/0,3/3}$) follows from equidistribution of $\pattern{scale = 0.5}{2}{1/1,2/2}{0/0,0/1,1/0,1/1} \pattern{scale = 0.5}{2}{1/2,2/1}{0/0,0/1,1/0,1/1}$ (resp., $\pattern{scale = 0.5}{2}{1/1,2/2}{0/0,0/1,1/0,1/1,2/2} \pattern{scale = 0.5}{2}{1/2,2/1}{0/0,0/1,1/0,1/1,2/2}$) by Theorem~\ref{equdis-2-patterns-length-2} (resp., Corollary~\ref{two-pat-length-2}).

The same distribution of pairs 19 and 20 (resp., 21 and 22) follow from the fact that under reverse and complement the pair $\pattern{scale = 0.5}{2}{1/1,2/2}{0/0,0/1,1/0,1/1} \pattern{scale = 0.5}{2}{1/2,2/1}{0/0,0/1,1/0,1/1}$ (resp., $\pattern{scale = 0.5}{2}{1/1,2/2}{0/0,0/1,1/0,1/1,2/2} \pattern{scale = 0.5}{2}{1/2,2/1}{0/0,0/1,1/0,1/1,2/2}$) goes to $\pattern{scale = 0.5}{2}{1/1,2/2}{1/1,1/2,2/1,2/2} \pattern{scale = 0.5}{2}{1/2,2/1}{1/1,1/2,2/1,2/2}$ (resp, $\pattern{scale = 0.5}{2}{1/1,2/2}{0/0,1/1,1/2,2/1,2/2} \pattern{scale = 0.5}{2}{1/2,2/1}{0/0,1/1,1/2,2/1,2/2}$).

\section{Cases based on the proof of Theorem~\ref{thm-L-shape}}\label{L-mesh-sec}

In this section, we explain equidistributions of pairs of patterns in Table~\ref{tab-5}.

\begin{table}[t]
 	{
 		\renewcommand{\arraystretch}{1.5}
 \begin{center} 
 		\begin{tabular}{|c|c||c|c||c|c||c|c|}
 			\hline
 			
 	\footnotesize{nr.}	& {\footnotesize patterns}  & \footnotesize{nr.}	& {\footnotesize patterns}  & \footnotesize{nr.}	& {\footnotesize patterns} & \footnotesize{nr.}	& {\footnotesize patterns}   \\ 
 			\hline
 			\hline	
\footnotesize{23}  & $\pattern{scale = 0.5}{3}{1/1,2/2,3/3}{0/0,0/1,0/2,1/0,2/0}\pattern{scale = 0.5}{3}{1/1,2/3,3/2}{0/0,0/1,0/2,1/0,2/0}$	  & 
\footnotesize{24}  &  $\pattern{scale = 0.5}{3}{1/1,2/2,3/3}{0/0,0/1, 1/0,0/2,2/0,2/2}\pattern{scale = 0.5}{3}{1/1,2/3,3/2}{0/0,0/1, 1/0,0/2,2/0,2/2}$	& 
\footnotesize{25}  &  	$\pattern{scale=0.5}{3}{1/1,2/2,3/3}{0/0,0/1,0/2,1/0,1/2, 2/0,2/1,2/3,3/2}\pattern{scale=0.5}{3}{1/1,2/3,3/2}{0/0,0/1,0/2,1/0,1/2, 2/0,2/1,2/3,3/2}$ &
\footnotesize{26}  & $\pattern{scale=0.5}{3}{1/1,2/2,3/3}{0/0,0/1,0/2,1/0,1/1, 1/3,2/0,3/1,3/3}\pattern{scale=0.5}{3}{1/1,2/3,3/2}{0/0,0/1,0/2,1/0,1/1, 1/3,2/0,3/1,3/3}$ \\[5pt]
 			\hline	
\footnotesize{27}  & 	$\pattern{scale=0.5}{3}{1/1,2/2,3/3}{0/0,0/1,0/2,1/0,1/2, 2/0,2/1,2/2,2/3,3/2}\pattern{scale=0.5}{3}{1/1,2/3,3/2}{0/0,0/1,0/2,1/0,1/2, 2/0,2/1,2/2,2/3,3/2}$  & 
\footnotesize{28}  &  $\pattern{scale=0.5}{3}{1/1,2/2,3/3}{0/0,0/1,0/2,1/0,1/1, 1/3,2/0,2/2,3/1,3/3} \pattern{scale=0.5}{3}{1/1,2/3,3/2}{0/0,0/1,0/2,1/0,1/1, 1/3,2/0,2/2,3/1,3/3}$ 	& 
\footnotesize{29}  &  	$\pattern{scale=0.5}{3}{1/1,2/2,3/3}{0/0,0/1,0/2,1/0,1/1, 1/2,1/3,2/0,2/1,2/3,3/1,3/2,3/3} \pattern{scale=0.5}{3}{1/1,2/3,3/2}{0/0,0/1,0/2,1/0,1/1, 1/2,1/3,2/0,2/1,2/3,3/1,3/2,3/3}$ & 
\footnotesize{30}  &  $\pattern{scale=0.5}{3}{1/1,2/2,3/3}{0/0,0/1,0/2,1/0,1/1, 1/2,2/0,2/1,2/2} \pattern{scale=0.5}{3}{1/1,2/3,3/2}{0/0,0/1,0/2,1/0,1/1, 1/2,2/0,2/1,2/2}$ \\[5pt]
 			\hline	
\footnotesize{31}  & $\pattern{scale=0.5}{3}{1/1,2/2,3/3}{0/0,0/1,0/2,1/0,1/1, 1/2,2/0,2/1,2/2,3/3}\pattern{scale=0.5}{3}{1/1,2/3,3/2}{0/0,0/1,0/2,1/0,1/1, 1/2,2/0,2/1,2/2,3/3}$	  & 
\footnotesize{32}  & $\pattern{scale = 0.5}{3}{1/1,2/2,3/3}{0/0,0/2,2/0}\pattern{scale = 0.5}{3}{1/1,2/3,3/2}{0/0,0/2,2/0}$ 	& 
\footnotesize{33}  &  	 $\pattern{scale = 0.5}{3}{1/1,2/2,3/3}{0/0,0/2,2/0,2/2}\pattern{scale = 0.5}{3}{1/1,2/3,3/2}{0/0,0/2,2/0,2/2}$	& 
\footnotesize{34}  & $\pattern{scale=0.5}{3}{1/1,2/2,3/3}{0/0,0/2,1/2, 2/0,2/1,2/3,3/2}\pattern{scale=0.5}{3}{1/1,2/3,3/2}{0/0,0/2,1/2, 2/0,2/1,2/3,3/2}$ \\[5pt]
 			\hline	
\footnotesize{35}  & 	$\pattern{scale=0.5}{3}{1/1,2/2,3/3}{0/0,0/2,1/1, 1/3,2/0,3/1,3/3}\pattern{scale=0.5}{3}{1/1,2/3,3/2}{0/0,0/2,1/1, 1/3,2/0,3/1,3/3}$   & 
\footnotesize{36}  & $\pattern{scale=0.5}{3}{1/1,2/2,3/3}{0/0,0/2,1/2, 2/0,2/1,2/2,2/3,3/2}\pattern{scale=0.5}{3}{1/1,2/3,3/2}{0/0,0/2,1/2, 2/0,2/1,2/2,2/3,3/2}$ 	& 
\footnotesize{37}  & $\pattern{scale=0.5}{3}{1/1,2/2,3/3}{0/0,0/2,1/1, 1/3,2/0,2/2,3/1,3/3} \pattern{scale=0.5}{3}{1/1,2/3,3/2}{0/0,0/2,1/1, 1/3,2/0,2/2,3/1,3/3}$ 	& 
\footnotesize{38}  &  $\pattern{scale=0.5}{3}{1/1,2/2,3/3}{0/0,0/2,1/1, 1/2,1/3,2/0,2/1,2/3,3/1,3/2,3/3} \pattern{scale=0.5}{3}{1/1,2/3,3/2}{0/0,0/2,1/1, 1/2,1/3,2/0,2/1,2/3,3/1,3/2,3/3}$ \\[5pt]
 			\hline	
\footnotesize{39}  & 	$\pattern{scale=0.5}{3}{1/1,2/2,3/3}{0/0,0/2,1/1,1/2,1/3, 2/0,2/1,2/2,2/3,3/1,3/2}\pattern{scale=0.5}{3}{1/1,2/3,3/2}{0/0,0/2,1/1,1/2,1/3, 2/0,2/1,2/2,2/3,3/1,3/2}$  & 
\footnotesize{40}      &  $\pattern{scale=0.5}{3}{1/1,2/2,3/3}{0/0,0/2,1/1,1/2,1/3, 2/0,2/1,2/2,2/3,3/1,3/2,3/3}\pattern{scale=0.5}{3}{1/1,2/3,3/2}{0/0,0/2,1/1,1/2,1/3, 2/0,2/1,2/2,2/3,3/1,3/2,3/3}$	& 
\footnotesize{41}       &  $\pattern{scale=0.5}{3}{1/1,2/2,3/3}{0/0,0/2,0/3,1/2, 2/0,2/1,2/3,3/0,3/2}\pattern{scale=0.5}{3}{1/1,2/3,3/2}{0/0,0/2,0/3,1/2, 2/0,2/1,2/3,3/0,3/2}$	& 
\footnotesize{42}       & $\pattern{scale=0.5}{3}{1/1,2/2,3/3}{0/0,0/2,0/3,1/2, 2/0,2/1,2/2,2/3,3/0,3/2}\pattern{scale=0.5}{3}{1/1,2/3,3/2}{0/0,0/2,0/3,1/2, 2/0,2/1,2/2,2/3,3/0,3/2}$   \\[5pt]
 			\hline	
\footnotesize{43}  & 	$\pattern{scale=0.5}{3}{1/1,2/2,3/3}{0/0,0/2,0/3,1/1, 1/2,1/3,2/0,2/1,2/3,3/0,3/1,3/2,3/3} \pattern{scale=0.5}{3}{1/1,2/3,3/2}{0/0,0/2,0/3,1/1, 1/2,1/3,2/0,2/1,2/3,3/0,3/1,3/2,3/3}$  & 
\footnotesize{44}      & $\pattern{scale=0.5}{3}{1/1,2/2,3/3}{0/0,2/0,3/0,1/1,2/1,3/1,0/2,1/2,2/2,3/2,0/3,1/3,2/3} \pattern{scale=0.5}{3}{1/1,2/3,3/2}{0/0,2/0,3/0,1/1,2/1,3/1,0/2,1/2,2/2,3/2,0/3,1/3,2/3}$ 	& 
 \footnotesize{45}      & $\pattern{scale=0.5}{3}{1/1,2/2,3/3}{0/0,0/2,0/3,1/1,1/2,1/3, 2/0,2/1,2/2,2/3,3/0,3/1,3/2,3/3}\pattern{scale=0.5}{3}{1/1,2/3,3/2}{0/0,0/2,0/3,1/1,1/2,1/3, 2/0,2/1,2/2,2/3,3/0,3/1,3/2,3/3}$ & 
      &   \\[5pt]
 \hline	

	\end{tabular}
\end{center} 
}
\vspace{-0.5cm}
 	\caption{Equidistributions explainable based on the proof of Theorem~\ref{thm-L-shape}. Pairs 23--29, 32--38, 40--43 and 45 (and pair 6 in Table~\ref{tab-1}) have the property that shading in the North-East area, relative to the minimum element, is symmetric with respect to a horizontal line. Pairs 30 and 46, and 31 and 47 have the same distributions by Theorem~\ref{an-equidistr-thm}.}\label{tab-5}
\end{table}

\begin{thm}\label{thm-L-shape}
We have $\pattern{scale = 0.5}{3}{1/1,2/2,3/3}{0/0,0/1, 1/0,0/2,2/0} \sim_d \hspace{-0.15cm} \pattern{scale = 0.5}{3}{1/1,2/3,3/2}{0/0,0/1, 1/0,0/2,2/0}$.
\end{thm}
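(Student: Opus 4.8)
The plan is to exhibit an explicit involution $\Phi$ on $S_n$ that turns every occurrence of $p_1$ into an occurrence of $p_2$ and every occurrence of $p_2$ into an occurrence of $p_1$, leaving all other entries untouched; this will in fact yield the joint statement $(p_1,p_2)\sim_d(p_2,p_1)$.

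First I would decode the shaded boxes to pin down the shape of an occurrence. Because $\boks{0}{0}$ is shaded, the first entry of any occurrence is a left-to-right minimum, say $\pi_a=m$. Let $M$ be the minimum of $\pi_1,\dots,\pi_{a-1}$ (set $M=+\infty$ if $a=1$), and let $q$ be the least index $>a$ with $\pi_q<m$ (set $q=n+1$ if none). The boxes $\boks{1}{0}$ and $\boks{2}{0}$ force every entry strictly between the three chosen positions to be $>m$, so the second and third entries of the occurrence sit at positions in the window $(a,q)$; the boxes $\boks{0}{1}$ and $\boks{0}{2}$ forbid any entry to the left of position $a$ from having value between $m$ and the larger of the two tail values, which --- since everything left of $a$ is $\ge M$ --- amounts to saying that both tail values lie in $(m,M)$. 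Consequently, putting
\[
S_a:=\{\, j \colon a<j<q,\ m<\pi_j<M \,\},
\]
the occurrences of $p_1$ (resp.\ $p_2$) with first entry $\pi_a$ are precisely the pairs of positions $b<c$ in $S_a$ with $\pi_b<\pi_c$ (resp.\ $\pi_b>\pi_c$). In particular, for a fixed left-to-right minimum $\pi_a$, the total number of occurrences of $p_1$ and of $p_2$ with first entry $\pi_a$ equals $\binom{|S_a|}{2}$, independently of which of the two patterns is considered.

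Next I would check that the blocks $S_a$, as $\pi_a$ runs over all left-to-right minima, are pairwise disjoint: if $j\in S_a\cap S_{a'}$ with $a<a'$, then position $a$ lies in the prefix defining the ``$M$'' of $a'$, so that quantity is $\le\pi_a$, forcing $\pi_j<\pi_a$, which contradicts $\pi_j>\pi_a$ coming from $j\in S_a$. Then $\Phi$ is defined by reversing, within each block $S_a$, the list of values occupying those positions (the set of occupied positions stays fixed). Reversing a sequence interchanges its inversions and non-inversions, so $\Phi$ swaps, block by block, the number of occurrences of $p_1$ with the number of occurrences of $p_2$.

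The step I expect to be the main obstacle is verifying that $\Phi$ is well defined and an involution, i.e.\ that the data $a,m,M,q,S_a$ attached to each left-to-right minimum is the same for $\pi$ and for $\Phi(\pi)$. The relevant facts are: no left-to-right minimum belongs to any block (it has a smaller entry to its left), so $\Phi$ does not disturb the left-to-right minima or their positions and values; and the values filling a block $S_a$ form a subset of the interval $(m,M)$, which lies entirely above the ``$m$'' of any earlier or equal minimum and entirely below the ``$m$'' of any later one, while each block occupies positions lying all on one side of any given left-to-right minimum's position. From this one gets that permuting values inside blocks changes neither the value-set of any prefix $\pi_1\dots\pi_{a-1}$ (so each $M$ is unchanged) nor the set of positions carrying a value below any given $m$ (so each $q$ is unchanged), whence each $S_a$ is unchanged as a set of positions. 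Granting this, applying $\Phi$ twice restores every block's value list, so $\Phi$ is an involution; together with the block-by-block swap of occurrences this proves $p_1\sim_d p_2$. I would close with a worked example (the one reused throughout Section~\ref{L-mesh-sec}) exhibiting the block decomposition of a permutation and the effect of $\Phi$.
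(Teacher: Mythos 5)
Your proposal is correct and takes essentially the same route as the paper's proof: you show every occurrence starts at a left-to-right minimum and that the second/third entries of all occurrences starting at a given minimum form exactly the block $S_a$ (the paper's $A_i$), and then apply a within-block involution reversing all pairwise comparisons, which swaps occurrences of the two patterns while your preservation argument for the data $(m,M,q,S_a)$ keeps everything else intact. The only cosmetic difference is that you reverse the values across each block's positions whereas the paper complements the values within the corresponding value interval $X_i$; both have the same effect (your ``earlier/later minimum'' phrasing reads as value order rather than position order, but the underlying facts you use are the right ones).
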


\begin{proof}

Let $\pi=\pi_1\ldots\pi_n\in S_n$ and $x_1>x_2>\cdots>x_k$ be the sequence of left-to-right minima in $\pi$. If $\pi$ has no occurrences of the patterns, $\pi$ will be mapped to itself. Otherwise, by definitions, any occurrence of  $\pattern{scale = 0.5}{3}{1/1,2/2,3/3}{0/0,0/1, 1/0,0/2,2/0}$ or $\hspace{-0.1cm} \pattern{scale = 0.5}{3}{1/1,2/3,3/2}{0/0,0/1, 1/0,0/2,2/0}$ in $\pi$ must begin with an $x_i$, $1\leq i\leq k$. Moreover, referring to a schematic representation of $\pi$ in Figure~\ref{fig-1}, occurrences of the patterns must be entirely inside $\{x_i\}\cup A_i$ for some $i$, $1\leq i\leq k$, where any $y\in A_i$ satisfies $x_{i}<y<x_{i-1}$ (some $A_i$'s can be empty, and $A_i$'s with just one element do not give any occurrences of the patterns). Indeed, the element $x_{i-1}$ (resp., $x_{i+1}$) ensures that elements in occurrences of the patterns cannot be above (resp., to the right of) $A_i$.  Also, note that if $A_i$ is of length at least 2, then \textit{each} pair of elements in it is involved in an occurrence of either $\pattern{scale = 0.5}{3}{1/1,2/2,3/3}{0/0,0/1, 1/0,0/2,2/0}$ or $\hspace{-0.1cm}\pattern{scale = 0.5}{3}{1/1,2/3,3/2}{0/0,0/1, 1/0,0/2,2/0}$. This is because the rightmost and leftmost elements in $A_i$, as well as the maximum and minimum elements, are involved in occurrences of these patterns (so there are no elements below $A_i$ and to the left of it).

Let 
$X_i=\{x_i+1,x_i+2,\ldots,x_{i-1}-1\}$, where $1\leq i\leq k$ and $x_0:=n+1$. We define the operation of complementation of $X_i$'s, $1\leq i\leq k$. Suppose $X_i$ is formed by the elements $\pi_{i_1}\pi_{i_2}\ldots\pi_{i_m}$ with $i_1<i_2<\cdots<i_m$. Then, for $j=1,2,\ldots,m$,  if $\pi_{i_j}$ is the $\ell$-th smallest element in $X_i$ then replace it by the $(m+1-\ell)$-th smallest element in $X_i$.  The complementation of $X_i$'s induces complementation on $A_i$ and exchanges occurrences of the patterns 12 and 21 in each $A_i$, hence it replaces each occurrence of $\pattern{scale = 0.5}{3}{1/1,2/2,3/3}{0/0,0/1, 1/0,0/2,2/0}$  by an occurrence of $\pattern{scale = 0.5}{3}{1/1,2/3,3/2}{0/0,0/1, 1/0,0/2,2/0}$, and vice versa, and no new occurrences of these patterns in the obtained permutation $\pi'$ can appear (any new occurrence must involve an element outside of an $A_i$, which is impossible because if an element was in a shaded area with respect to two elements $a$ and $b$ making $ab$ non-occurrence of a pattern, it will continue to be in the shaded area for $ab$). 

To complete the proof of the theorem, apply the operation of complementation to all $X_i$'s in every permutation in $S_n$, which will turn, in a bijective manner, permutations with $x$ occurrences of  $\pattern{scale = 0.5}{3}{1/1,2/2,3/3}{0/0,0/1, 1/0,0/2,2/0}$  and $y$ occurrence of $\pattern{scale = 0.5}{3}{1/1,2/3,3/2}{0/0,0/1, 1/0,0/2,2/0}$ into permutations with $y$ occurrences of  $\pattern{scale = 0.5}{3}{1/1,2/2,3/3}{0/0,0/1, 1/0,0/2,2/0}$  and $x$ occurrence of $\pattern{scale = 0.5}{3}{1/1,2/3,3/2}{0/0,0/1, 1/0,0/2,2/0}$. \end{proof}

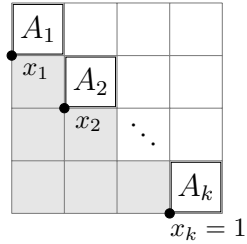
\begin{figure}  
\begin{center}  
\begin{tikzpicture}[scale=0.7]    
\tikzset{      
   grid/.style={        
      draw,        
      step=1cm,        
      gray!100,    
      thin,        
    },   
    cell/.style={      
      draw,      
      anchor=center,    
      text centered,    
     },    
    graycell/.style={   
      fill=gray!20,
      draw=none,     
      minimum width=1cm,   
      minimum height=1cm,     
      anchor=south west,            
    }    
  } 
      
  \fill[graycell] (0,0) rectangle (1,1);
  \fill[graycell] (0,1) rectangle (1,2);
  \fill[graycell] (0,2) rectangle (1,3);
  \fill[graycell] (1,0) rectangle (2,1);
  \fill[graycell] (1,1) rectangle (2,2);
  \fill[graycell] (2,0) rectangle (3,1);

\node[cell,scale=2.4] at (0.5,3.5) {};  
\node[cell,scale=2.4] at (1.5,2.5) {};  
\node[cell,scale=2.4] at (3.5,0.5) {};  
   
 \draw[grid] (0,0) grid (4,4);  

  \node at (0.5,3.5) {$A_1$};  
  \node at (1.5,2.5) {$A_2$};  
  \node at (3.5,0.5) {$A_k$};
  \node[anchor=west] at (0,2.7) {\footnotesize{$x_1$}}; 
  \node[anchor=west] at (1,1.7) {\footnotesize{$x_2$}}; 
  \node[anchor=west] at (2.8,-0.3) {\footnotesize{$x_k=1$}}; 
 
  \filldraw[black] (0,3) circle (2.5pt);  
  \filldraw[black] (1,2) circle (2.5pt); 
  \filldraw[black] (3,0) circle (2.5pt);  
 
 \node[anchor=center, rotate=-45] at (2.5,1.5) {$\ldots$};
\end{tikzpicture} 

\vspace{-0.5cm}

\end{center}
\caption{The structure of permutations in the proof of Theorem~\ref{thm-L-shape}. Each $A_i$ is of length $\geq 0$.}\label{fig-1}
\end{figure}

To illustrate our proof of Theorem~\ref{thm-L-shape}, consider the permutation $\pi=9(11)4(12)8(10)5713(13)62$ with $x_1=9$, $x_2=4$, $x_3=1$, $A_1=(11)$, $A_2=857$,  $A_3=32$, $X_1=(11)(12)(10)(13)$, $X_2=8576$, and $X_3=32$. The complementation process will result in the permutation $\pi'=9(12)4(11)5(13)8612(10)73$.  See Figure~\ref{fig-2} for the permutation diagrams for $\pi$ and $\pi'$. Note that the occurrence 457 of  $\pattern{scale = 0.5}{3}{1/1,2/2,3/3}{0/0,0/1, 1/0,0/2,2/0}$  in $\pi$ is replaced by the occurrence 486 of $\pattern{scale = 0.5}{3}{1/1,2/3,3/2}{0/0,0/1, 1/0,0/2,2/0}$ in $\pi'$, while the occurrences 485, 487, 132 of  $\pattern{scale = 0.5}{3}{1/1,2/3,3/2}{0/0,0/1, 1/0,0/2,2/0}$ in $\pi$ are replaced, respectively, by the occurrences 458, 456, 123 of  $\pattern{scale = 0.5}{3}{1/1,2/2,3/3}{0/0,0/1, 1/0,0/2,2/0}$ in $\pi'$; no new occurrences of the patterns are introduced in~$\pi'$.

\begin{figure}
\begin{center}

\begin{tabular}{ccc}

\begin{tikzpicture}[scale=0.25] 
\tikzset{    
   grid/.style={      
      draw,      
      step=1cm,      
      gray!100,     
      very thin,      
    }, 
    cell/.style={    
      draw,    
      anchor=center,  
      text centered,    
     },  
    graycell/.style={ 
      fill=gray!40,   
      draw=none,   
      minimum width=1cm, 
      minimum height=1cm,   
      anchor=south west,   
    }
  }  
    
  \draw[grid] (0,0) grid (12,12);  
%
\draw[thick] (0,8) circle (8pt); \draw[thick] (0,8) circle (2pt); 
  \draw[thick] (1,10) circle (8pt); \filldraw[white] (1,10) circle (6pt); 
  \draw[thick] (2,3) circle (8pt); \draw[thick] (2,3) circle (2pt); 
  \filldraw[black] (3,11) circle (8pt); 
  \draw[thick] (4,7) circle (8pt);  \filldraw[white] (4,7) circle (6pt); 
  \filldraw[black] (5,9) circle (8pt); 
  \draw[thick] (6,4) circle (8pt); \filldraw[white] (6,4) circle (6pt); 
  \draw[thick] (7,6) circle (8pt);  \filldraw[white] (7,6) circle (6pt); 
  \draw[thick] (8,0) circle (8pt); \draw[thick] (8,0) circle (2pt); 
  \draw[thick] (9,2) circle (8pt); \filldraw[white] (9,2) circle (6pt); 
  \filldraw[black] (10,12) circle (8pt);  
  \filldraw[black] (11,5) circle (8pt); 
  \draw[thick] (12,1) circle (8pt); \filldraw[white] (12,1) circle (6pt); 
  
   \node[cell] at (1,10) {};  
   \node[cell,scale=3.6] at (5.5,5.5) {};  
\node[cell,scale=2,] at (10.5,1.5) { \ \hspace{-0.5mm} \ };

\end{tikzpicture} 

& \ & 
 \begin{tikzpicture}[scale=0.25] 
\tikzset{    
   grid/.style={      
      draw,      
      step=1cm,      
      gray!100,     
      very thin,      
    }, 
    cell/.style={    
      draw,    
      anchor=center,  
      text centered,    
     },  
    graycell/.style={ 
      fill=gray!40,   
      draw=none,   
      minimum width=1cm, 
      minimum height=1cm,   
      anchor=south west,   
    }
  }  
    
  \draw[grid] (0,0) grid (12,12);  
%
\draw[thick] (0,8) circle (8pt); \draw[thick] (0,8) circle (2pt); 
  \draw[thick] (1,11) circle (8pt); \filldraw[white] (1,11) circle (6pt); 
  \draw[thick] (2,3) circle (8pt); \draw[thick] (2,3) circle (2pt); 
  \filldraw[black] (3,10) circle (8pt); 
  \draw[thick] (4,4) circle (8pt);  \filldraw[white] (4,4) circle (6pt); 
  \filldraw[black] (5,12) circle (8pt); 
  \draw[thick] (6,7) circle (8pt); \filldraw[white] (6,7) circle (6pt); 
  \draw[thick] (7,5) circle (8pt);  \filldraw[white] (7,5) circle (6pt); 
  \draw[thick] (8,0) circle (8pt); \draw[thick] (8,0) circle (2pt); 
  \draw[thick] (9,1) circle (8pt); \filldraw[white] (9,1) circle (6pt); 
  \filldraw[black] (10,9) circle (8pt);  
  \filldraw[black] (11,6) circle (8pt); 
  \draw[thick] (12,2) circle (8pt);\filldraw[white] (12,2) circle (6pt); 
  
    \node[cell] at (1,11) {};  
    \node[cell,scale=3.6] at (5.5,5.5) {};  
\node[cell,scale=2,] at (10.5,1.5) { \ \hspace{-0.5mm} \ };  

\end{tikzpicture}
\end{tabular}
\end{center}

\vspace{-0.5cm}

\caption{Permutations $\pi$ (to the left) and $\pi'$ (to the right) illustrating the proof of Theorem~\ref{thm-L-shape}. The circled dots represent the left-to-right minima, and the white dots represent the elements in $A_i$'s, $i=1,2,3$.}\label{fig-2}
\end{figure}
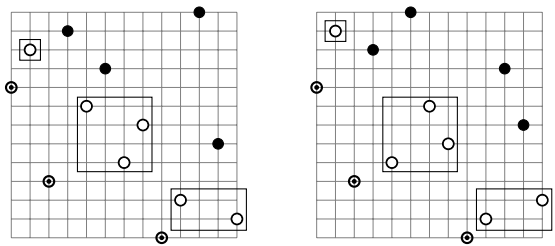

Essentially copying and pasting the proof of Theorem~\ref{thm-L-shape}, we can justify several other equidistributions, which we proceed to do next. Instead of providing full proofs, we will simply explain the differences with, and necessary adjustments to the proof of Theorem~\ref{thm-L-shape}. The common feature of the mesh patterns in the next theorem is that the shading in the North-East area relative to the minimum element is symmetric with respect to a horizontal line (similarly to the patterns considered in Section~\ref{compl-sec}). Note that the equidistribution $\pattern{scale=0.5}{3}{1/1,2/2,3/3}{0/0,0/1,0/2,1/0,1/1, 1/2,1/3,2/0,2/1,2/2,2/3,3/1,3/2,3/3} \sim_d \hspace{-0.15cm} \pattern{scale=0.5}{3}{1/1,2/3,3/2}{0/0,0/1,0/2,1/0,1/1, 1/2,1/3,2/0,2/1,2/2,2/3,3/1,3/2,3/3}$ has already been explained in Section~\ref{direct-arg-sec} (this is pair 6 in Table~\ref{tab-1}), but we still include this pair in Theorem~\ref{thm-L-shape-1} for completeness.

\begin{thm}\label{thm-L-shape-1}
We have $\pattern{scale = 0.5}{3}{1/1,2/2,3/3}{0/0,0/1, 1/0,0/2,2/0,2/2} \sim_d \pattern{scale = 0.5}{3}{1/1,2/3,3/2}{0/0,0/1, 1/0,0/2,2/0,2/2}$, 
$\pattern{scale=0.5}{3}{1/1,2/2,3/3}{0/0,0/1,0/2,1/0,1/2, 2/0,2/1,2/3,3/2} \sim_d \hspace{-0.15cm} \pattern{scale=0.5}{3}{1/1,2/3,3/2}{0/0,0/1,0/2,1/0,1/2, 2/0,2/1,2/3,3/2}$,
$\pattern{scale=0.5}{3}{1/1,2/2,3/3}{0/0,0/1,0/2,1/0,1/1, 1/3,2/0,3/1,3/3} \sim_d \hspace{-0.15cm} \pattern{scale=0.5}{3}{1/1,2/3,3/2}{0/0,0/1,0/2,1/0,1/1, 1/3,2/0,3/1,3/3}$, \\
$\pattern{scale=0.5}{3}{1/1,2/2,3/3}{0/0,0/1,0/2,1/0,1/1, 1/3,2/0,2/2,3/1,3/3} \sim_d \hspace{-0.15cm} \pattern{scale=0.5}{3}{1/1,2/3,3/2}{0/0,0/1,0/2,1/0,1/1, 1/3,2/0,2/2,3/1,3/3}$,
$\pattern{scale=0.5}{3}{1/1,2/2,3/3}{0/0,0/1,0/2,1/0,1/2, 2/0,2/1,2/2,2/3,3/2} \sim_d \hspace{-0.15cm} \pattern{scale=0.5}{3}{1/1,2/3,3/2}{0/0,0/1,0/2,1/0,1/2, 2/0,2/1,2/2,2/3,3/2}$, 
$\pattern{scale=0.5}{3}{1/1,2/2,3/3}{0/0,0/1,0/2,1/0,1/1, 1/2,1/3,2/0,2/1,2/3,3/1,3/2,3/3} \sim_d \hspace{-0.15cm} \pattern{scale=0.5}{3}{1/1,2/3,3/2}{0/0,0/1,0/2,1/0,1/1, 1/2,1/3,2/0,2/1,2/3,3/1,3/2,3/3}$, and $\pattern{scale=0.5}{3}{1/1,2/2,3/3}{0/0,0/1,0/2,1/0,1/1, 1/2,1/3,2/0,2/1,2/2,2/3,3/1,3/2,3/3} \sim_d \hspace{-0.15cm} \pattern{scale=0.5}{3}{1/1,2/3,3/2}{0/0,0/1,0/2,1/0,1/1, 1/2,1/3,2/0,2/1,2/2,2/3,3/1,3/2,3/3}$.
\end{thm}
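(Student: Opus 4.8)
The plan is to transcribe the proof of Theorem~\ref{thm-L-shape} essentially verbatim; what makes this possible is a common structural feature of the seven patterns. Write $R_0=\{\boks{0}{0},\boks{0}{1},\boks{0}{2},\boks{1}{0},\boks{2}{0}\}$ for the ``L-shape'' appearing in Theorem~\ref{thm-L-shape}, and let $N=\{\boks{i}{j}\colon 1\le i,j\le 3\}$ be the North-East area relative to the minimum element. A finite check shows that for each of the seven pairs the shading $R$ satisfies $R_0\subseteq R$ and $R\setminus R_0\subseteq N$, and that $R\cap N$ is invariant under the reflection $\sigma$ across the horizontal midline of $N$ (equivalently, under the transposition of box-rows $1$ and $3$, box-columns fixed); this invariance is precisely the hypothesis highlighted in the paragraph preceding the theorem.

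First I would reuse the structural half of the proof of Theorem~\ref{thm-L-shape} with no change: because $R_0\subseteq R$, every occurrence of $p_1$ or $p_2$ starts at a left-to-right minimum $x_i$ and lies entirely inside $\{x_i\}\cup A_i$, with $A_i$ as in Figure~\ref{fig-1} (shading more boxes only destroys occurrences, so the confinement of the $R_0$-case is inherited). Then I would apply the same complementation of all value-intervals $X_i=\{x_i+1,\ldots,x_{i-1}-1\}$ simultaneously; this is an involution on $S_n$. The key claim to establish is that, for an occurrence $x_i\,y\,z$, this operation acts on the ambient diagram exactly as $\sigma$ on the box-rows of $N$, box-columns unchanged: it fixes every element of value $<x_i$ (which sits in box-row $0$) and every element of value $>x_{i-1}$ (which sits in box-row $3$ with its column unchanged), and on the elements of value in $X_i$ — these are the elements of $A_i$ together with any ``$X_i$-stray'' lying to the right of $x_{i+1}$ — it reverses the vertical order, which a short computation identifies with $\sigma$ on box-rows $1,2,3$. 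The column-$0$ conditions $\boks{0}{0},\boks{0}{1},\boks{0}{2}$ are vacuous for such a triple (any element to the left of $x_i$ has value $>x_{i-1}$, hence sits in the unshaded box $\boks{0}{3}$), the row-$0$ conditions $\boks{1}{0},\boks{2}{0}$ are untouched (low elements and all positions stay put), so the only mesh conditions that can change are those inside $N$ — and those are preserved because $R\cap N$ is $\sigma$-invariant. Consequently the operation carries a valid occurrence of $p_1$ anchored at $x_i$ to a valid occurrence of $p_2$ anchored at $x_i$ and conversely, creating or destroying no occurrences (the robustness argument of Theorem~\ref{thm-L-shape}: an element in a shaded box of a non-occurrence stays in a shaded box). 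Counting occurrences before and after the involution then yields the (joint) equidistribution for each of the seven pairs.

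The part I expect to demand the most care is the bookkeeping in the middle paragraph, namely verifying that the complementation of $X_i$ genuinely realizes $\sigma$ on the box-rows — in particular that this holds not only for elements of $A_i$ but also for the ``$X_i$-strays'' that lie past $x_{i+1}$ — and that no mesh condition outside $N$ is affected. Once that is isolated, all seven equidistributions follow from the single argument above, and what remains is only the finite table-check that $R_0\subseteq R$, $R\setminus R_0\subseteq N$, and $R\cap N$ is horizontally symmetric in each of the seven cases.
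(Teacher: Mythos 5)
Your proposal is correct and is essentially the paper's own argument: the paper likewise reuses the structural confinement and the complementation of the value-intervals $X_i$ from Theorem~\ref{thm-L-shape}, observing that the extra shading lies in the North-East region and is symmetric under the horizontal flip (which the paper phrases as the induced length-2 mesh patterns being exchanged by complementation, equivalent to your $\sigma$-invariance of $R\cap N$). Your added bookkeeping about the $X_i$-strays and the column-$0$/row-$0$ boxes is a more detailed version of what the paper leaves implicit, not a different route.
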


\begin{proof}
The only difference from the proof of Theorem~\ref{thm-L-shape} is that now not any pair of elements in $A_i$ is involved in an occurrence of a pattern in question, but only pairs forming an occurrence of respective patterns of length 2 in the North-East area relative to $x_i$. This does not affect our arguments in the proof of Theorem~\ref{thm-L-shape} since the only changes in $\pi$ occur inside $X_i$'s and under complementing $X_i$, occurrences of $\pattern{scale = 0.5}{2}{1/1,2/2}{1/1}$ and  $\hspace{-0.15cm} \pattern{scale = 0.5}{2}{1/2,2/1}{1/1}$ (resp., $\pattern{scale = 0.5}{2}{1/1,2/2}{0/1,1/0,1/2,2/1}$ and  $\hspace{-0.15cm} \pattern{scale = 0.5}{2}{1/2,2/1}{0/1,1/0,1/2,2/1}$, $\pattern{scale = 0.5}{2}{1/1,2/2}{0/0,0/2,2/0,2/2}$ and  $\hspace{-0.15cm} \pattern{scale = 0.5}{2}{1/2,2/1}{0/0,0/2,2/0,2/2}$,    $\pattern{scale = 0.5}{2}{1/1,2/2}{0/0,0/2,1/1,2/0,2/2}$ and  $\hspace{-0.15cm} \pattern{scale = 0.5}{2}{1/2,2/1}{0/0,0/2,1/1,2/0,2/2}$, $\pattern{scale = 0.5}{2}{1/1,2/2}{0/1,1/0,1/1,1/2,2/1}$ and  $\hspace{-0.15cm} \pattern{scale = 0.5}{2}{1/2,2/1}{0/1,1/0,1/1,1/2,2/1}$, $\pattern{scale = 0.5}{2}{1/1,2/2}{0/0,0/1,0/2,1/0,1/2,2/0,2/1,2/2}$ and  $\hspace{-0.15cm} \pattern{scale = 0.5}{2}{1/2,2/1}{0/0,0/1,0/2,1/0,1/2,2/0,2/1,2/2}$, $\pattern{scale = 0.5}{2}{1/1,2/2}{0/0,0/1,0/2,1/0,1/1,1/2,2/0,2/1,2/2}$ and  $\hspace{-0.15cm} \pattern{scale = 0.5}{2}{1/2,2/1}{0/0,0/1,0/2,1/0,1/1,1/2,2/0,2/1,2/2}$) go to each other.
\end{proof}

\begin{thm}\label{9-ls}
We have $\pattern{scale=0.5}{3}{1/1,2/2,3/3}{0/0,0/1,0/2,1/0,1/1, 1/2,2/0,2/1,2/2} \sim_d \hspace{-0.15cm} \pattern{scale=0.5}{3}{1/1,2/3,3/2}{0/0,0/1,0/2,1/0,1/1, 1/2,2/0,2/1,2/2}$.
\end{thm}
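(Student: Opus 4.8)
The plan is to follow the proof of Theorem~\ref{thm-L-shape}, but to replace its complementation step, which fails here: for the present shading not every pair of entries in the relevant region forms an occurrence, and complementing $X_i$ may even destroy the shading of an occurrence. Throughout, ``the $123$-pattern'' and ``the $132$-pattern'' mean the two mesh patterns in the statement. Fix $\pi=\pi_1\cdots\pi_n$ with left-to-right minima $x_1>\cdots>x_k$, put $x_0=n+1$, and let $B_i$ be the set of positions from that of $x_i$ up to but not including that of $x_{i+1}$ (with $x_{k+1}$ placed at position $n+1$). Unwinding the shading, the first entry of every occurrence of either pattern is some $x_i$, and a subsequence $x_i\,b\,c$ (entries in position order) is an occurrence of the $123$-pattern (resp.\ the $132$-pattern) exactly when $x_i<b<c<x_{i-1}$ (resp.\ $x_i<c<b<x_{i-1}$), the three positions increase, and every entry whose position lies strictly between those of $x_i$ and $c$, other than $b$, has value larger than $c$ (resp.\ $b$). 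In particular $b,c\in X_i:=\{x_i+1,\ldots,x_{i-1}-1\}$ with positions in $B_i$; let $\sigma_i=s_1\cdots s_{m_i}$ be the subword of $\pi$ consisting of the entries with value in $X_i$ and position in $B_i$.

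An entry of $B_i$ not in $\sigma_i$ is $x_i$ (the left endpoint of any occurrence at $x_i$), or the next minimum $x_{i+1}$ (whose position is past that of $c$ for any such occurrence), or a ``large'' entry of value $\ge x_{i-1}$ (which exceeds every entry of $\sigma_i$ and so never violates an inequality above). Consequently, occurrences of the $123$-pattern at $x_i$ correspond bijectively to indices $\ell$ with $s_\ell$ the second smallest of $s_1,\ldots,s_\ell$ (taking $b=\min\{s_1,\ldots,s_\ell\}$, $c=s_\ell$), and occurrences of the $132$-pattern at $x_i$ to indices $\ell\ge2$ with $s_\ell$ a left-to-right minimum of $\sigma_i$ (taking $b=\min\{s_1,\ldots,s_{\ell-1}\}$, $c=s_\ell$). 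Writing, for $\sigma=\sigma_1\cdots\sigma_m\in S_m$, $a(\sigma)$ for the number of indices $\ell$ at which $\sigma_\ell$ is the second smallest of $\sigma_1,\ldots,\sigma_\ell$ and $d(\sigma)$ for the number of non-initial left-to-right minima of $\sigma$, the $123$-pattern occurs $\sum_i a(\sigma_i)$ times in $\pi$ and the $132$-pattern occurs $\sum_i d(\sigma_i)$ times, so it suffices to swap $a$ and $d$ bijectively, block by block.

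For this I construct an involution $\phi$ on $S_m$ with $a(\phi(\sigma))=d(\sigma)$ and $d(\phi(\sigma))=a(\sigma)$. Build $\sigma$ from the one-letter word by inserting $2,3,\ldots,m$ in turn, recording the slot $q_j\in\{1,\ldots,j\}$ into which $j$ is placed. Inserting the current largest value into slot $1$ turns the old first entry into a new non-initial left-to-right minimum ($d$ up by $1$, $a$ unchanged); into slot $2$ it becomes a new second-smallest record ($a$ up by $1$, $d$ unchanged); into any slot $\ge3$ it is neither, and it does not alter the status of later entries. Hence, if $\sigma$ has slot sequence $(q_2,\ldots,q_m)$, let $\phi(\sigma)$ be the permutation whose slot sequence is obtained by interchanging the values $1$ and $2$ in each $q_j$ (all other values left fixed). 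Then $\phi$ is an involution, and induction on $m$ gives $a(\phi(\sigma))=d(\sigma)$ and $d(\phi(\sigma))=a(\sigma)$; equivalently $\sum_{\sigma\in S_m}t^{a(\sigma)}u^{d(\sigma)}=\prod_{i=0}^{m-2}(t+u+i)$, which is manifestly symmetric in $t,u$.

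Finally, define a map on $S_n$ by replacing, for every $i$, the subword $\sigma_i$ by the word on the same values and positions that is order-isomorphic to $\phi(\sigma_i)$, and keeping all other entries of $\pi$ fixed. Since the $\sigma_i$ occupy pairwise disjoint position sets and every moved entry still lies between $x_i$ and $x_{i+1}$ in position with value exceeding $x_i$, the left-to-right minima of $\pi$, the blocks $B_i$, and the subwords $\sigma_j$ for $j\ne i$ are all preserved; thus this is an involution on $S_n$ under which the $123$-pattern now occurs $\sum_i a(\phi(\sigma_i))=\sum_i d(\sigma_i)$ times and the $132$-pattern $\sum_i d(\phi(\sigma_i))=\sum_i a(\sigma_i)$ times, i.e.\ it interchanges the occurrences of the two patterns, yielding the (joint) equidistribution. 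The main obstacle, beyond recognizing that complementation must be abandoned, is to locate the record-swapping involution $\phi$ and to check that its block-local application is compatible with the ``ceiling'' condition $c<x_{i-1}$ built into the description of the occurrences.
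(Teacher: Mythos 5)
Your proof is correct. Structurally it is the same reduction the paper uses: decompose $\pi$ by its left-to-right minima as in the proof of Theorem~\ref{thm-L-shape}, observe that both elements following $x_i$ in any occurrence lie in the block $A_i$ (your $\sigma_i$), translate occurrences at $x_i$ into two statistics on $\sigma_i$ — ``second-smallest prefix records'' versus non-initial left-to-right minima, which are exactly the occurrence counts of the two fully shaded length-2 mesh patterns — and then permute the values of each block in place, checking that the minima/block structure and hence the intrinsic characterization of occurrences is preserved. Where you diverge is in the engine that interchanges the two statistics on a block: the paper simply invokes its Theorem~\ref{equdis-2-patterns-length-2} (proved there via the Stirling-number recursion, with a bijective version available by the swapping procedure of Corollary~\ref{two-pat-length-2}), whereas you construct a self-contained involution from the insertion encoding in which slot $1$ creates a new non-initial left-to-right minimum, slot $2$ a new second-smallest record, and all other slots create neither, and you swap the slot values $1$ and $2$. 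This is a genuinely different proof of the length-2 equidistribution; it moreover yields the refined identity $\sum_{\sigma\in S_m}t^{a(\sigma)}u^{d(\sigma)}=\prod_{i=0}^{m-2}(t+u+i)$, so your block-local map interchanges the two statistics and your global map gives the joint equidistribution that the paper emphasizes. The paper's route is shorter because it reuses machinery already established; yours is more explicit and quantitatively sharper. (One cosmetic slip: with $B_i$ defined as the positions strictly before that of $x_{i+1}$, the entry $x_{i+1}$ is not itself in $B_i$; this has no bearing on the argument.)
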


\begin{proof}
The desired result cannot be obtained through the complementation operation on $X_i$'s. Nevertheless, we proceed following the same steps as in the proof of Theorem~\ref{thm-L-shape-1}. The equidistribution $\pattern{scale=0.5}{3}{1/1,2/2,3/3}{0/0,0/1,0/2,1/0,1/1, 1/2,2/0,2/1,2/2} \sim_d \hspace{-0.15cm} \pattern{scale=0.5}{3}{1/1,2/3,3/2}{0/0,0/1,0/2,1/0,1/1, 1/2,2/0,2/1,2/2}$ follows from the equidistribution of  $\pattern{scale = 0.5}{2}{1/1,2/2}{0/0,0/1,1/0,1/1}\sim_d\pattern{scale = 0.5}{2}{1/2,2/1}{0/0,0/1,1/0,1/1}$, by Theorem~\ref{equdis-2-patterns-length-2}, in each $A_i$ (we can use the respective bijection for the patterns of length 2 to permute the elements of $A_i$ in the same positions accordingly). \end{proof}

 \begin{thm}\label{9-2s}
We have $\pattern{scale=0.5}{3}{1/1,2/2,3/3}{0/0,0/1,0/2,1/0,1/1, 1/2,2/0,2/1,2/2,3/3} \sim_d \hspace{-0.15cm} \pattern{scale=0.5}{3}{1/1,2/3,3/2}{0/0,0/1,0/2,1/0,1/1, 1/2,2/0,2/1,2/2,3/3}$.
\end{thm}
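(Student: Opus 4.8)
The plan is to follow the proof of Theorem~\ref{9-ls} almost verbatim, the one new feature being that the extra shaded box $\boks{3}{3}$ forces us, in the reduction inside each block $A_i$ to a pair of patterns of length~$2$, to invoke Corollary~\ref{two-pat-length-2} in place of Theorem~\ref{equdis-2-patterns-length-2}. Concretely, let $\pi=\pi_1\cdots\pi_n$ have left-to-right minima $x_1>\cdots>x_k$ (with $x_0:=n+1$), and recall from the proof of Theorem~\ref{thm-L-shape} the decomposition of $\pi$ by its left-to-right minima: since the $3\times3$ block $\{\boks{a}{b}:0\le a,b\le 2\}$ is shaded, every occurrence $\pi_a\pi_b\pi_c$ of either pattern has $\pi_a=x_i$ for some $i$ and $\pi_b,\pi_c$ in the block $A_i$ formed by the entries of value strictly between $x_i$ and $x_{i-1}$ that occur (positionally) between $x_i$ and $x_{i+1}$; write $\sigma$ for the permutation order-isomorphic to $A_i$. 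Exactly as in Theorem~\ref{9-ls}, the first column of the $3\times3$ block is automatically empty, and its second and third columns assert precisely that $\boks{0}{0},\boks{0}{1},\boks{1}{0},\boks{1}{1}$ of the length-$2$ pattern on $\sigma$ are empty.

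The point requiring care is the new box $\boks{3}{3}$, which asserts that no entry of $\pi$ occurring after $\pi_c$ has value exceeding $\max(\pi_b,\pi_c)$. I would decompose this into two statements: ``no entry of $A_i$ after $\pi_c$ has value exceeding $\max(\pi_b,\pi_c)$'', which is exactly emptiness of the box $\boks{2}{2}$ of the length-$2$ pattern on $\sigma$; and ``no entry of $\pi$ outside $A_i$ occurring after $\pi_c$ has value exceeding $\max(\pi_b,\pi_c)$'', which concerns only entries of value at least $x_{i-1}$ and involves no entry of $A_i$. Hence a pair of entries of $A_i$ prefixed by $x_i$ is an occurrence of $\pattern{scale=0.5}{3}{1/1,2/2,3/3}{0/0,0/1,0/2,1/0,1/1, 1/2,2/0,2/1,2/2,3/3}$ (resp.\ $\pattern{scale=0.5}{3}{1/1,2/3,3/2}{0/0,0/1,0/2,1/0,1/1, 1/2,2/0,2/1,2/2,3/3}$) precisely when the corresponding pair is an occurrence of $\pattern{scale = 0.5}{2}{1/1,2/2}{0/0,0/1,1/0,1/1,2/2}$ (resp.\ $\pattern{scale = 0.5}{2}{1/2,2/1}{0/0,0/1,1/0,1/1,2/2}$) in $\sigma$ \emph{and} the second (``external'') condition holds at the position of $\pi_c$.

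To finish, I would apply, independently in every $A_i$, the bijection of Corollary~\ref{two-pat-length-2} to permute the entries of $A_i$ within their positions. By that corollary it interchanges occurrences of $\pattern{scale = 0.5}{2}{1/1,2/2}{0/0,0/1,1/0,1/1,2/2}$ and $\pattern{scale = 0.5}{2}{1/2,2/1}{0/0,0/1,1/0,1/1,2/2}$ in $\sigma$, keeping each such occurrence in the same positions and creating or destroying no others; and since it moves only entries of $A_i$, it fixes the set of left-to-right minima, fixes every other block $A_m$, and leaves the external condition at each position unchanged. Therefore it carries each occurrence of $\pattern{scale=0.5}{3}{1/1,2/2,3/3}{0/0,0/1,0/2,1/0,1/1, 1/2,2/0,2/1,2/2,3/3}$ to an occurrence of $\pattern{scale=0.5}{3}{1/1,2/3,3/2}{0/0,0/1,0/2,1/0,1/1, 1/2,2/0,2/1,2/2,3/3}$ at the same positions, and conversely, yielding the (joint) equidistribution; the construction can be illustrated as in Figure~\ref{fig-2}. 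The main obstacle is exactly this decomposition of $\boks{3}{3}$ into the local box $\boks{2}{2}$ plus a condition that the $A_i$-rearrangement does not see; granting it, all remaining verifications are those already carried out in the proof of Theorem~\ref{9-ls}.
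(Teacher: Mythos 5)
Your strategy is the same as the paper's: decompose $\pi$ by its left-to-right minima, note that the shaded $3\times 3$ block confines the second and third letters of any occurrence to a single block $A_i$, and rearrange each $A_i$ in place by the bijection of Corollary~\ref{two-pat-length-2}, treating the box $(3,3)$ as the box $(2,2)$ of the length-$2$ patterns plus an external condition. However, the statement you lean on for the one genuinely new point --- ``by that corollary it interchanges occurrences \ldots keeping each such occurrence in the same positions'' --- is false for that bijection: only the position of the \emph{last} element of each occurrence is preserved, while first elements migrate, because one letter can begin several occurrences and the procedure swaps iteratively. The paper's own examples show this: in the illustration of Corollary~\ref{two-pat-length-2}, the occurrence $58$ at positions $(2,3)$ of $\pi=958746132$ corresponds to $87$ at positions $(1,3)$ of $\pi'$, and in the illustration of this very theorem the occurrence $478$ of $\pi=(10)479861523$ at positions $(2,3,5)$ is replaced by $486$ at positions $(2,4,5)$. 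Since your transfer of the $(3,3)$-condition is argued occurrence-by-occurrence from this false premise, the key verification has a gap. A secondary inaccuracy: the external condition does not concern only entries of value at least $x_{i-1}$; entries with values in $(x_i,x_{i-1})$ lying to the right of $x_{i+1}$ (i.e., in $X_i\setminus A_i$, like the entry $5$ in the paper's example) also matter when the maximum of the occurrence is small, so the condition depends both on the position of $\pi_c$ and on $\max(\pi_b,\pi_c)$.

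The gap is repairable, and the repair is what the paper's brief parenthetical is pointing at. The procedure of Corollary~\ref{two-pat-length-2} preserves, for every occurrence, the position $q$ of its last element, toggling its type exactly once; moreover, because of the shading, the maximum of the occurrence ending at $q$ is forced by $q$ alone (inside $A_i$, every letter left of $q$ other than the pair exceeds that maximum and every letter right of $q$ lies below it, so the number of letters of $A_i$ exceeding the maximum equals the number of $A_i$-positions before $q$ minus one). The external condition coming from box $(3,3)$ involves no entry of $A_i$ and depends only on this pair (last position, maximum), hence it holds for the new occurrence ending at $q$ if and only if it held for the old one, and the length-$3$ occurrences are exchanged exactly as in Theorem~\ref{9-ls}. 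With this substitution your argument coincides with the paper's proof.
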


\begin{proof}
Again, the desired result cannot be obtained through the complementation operation on $X_i$'s. However, we can proceed with the same steps as in the proof of Theorem~\ref{9-ls}. The equidistribution $\pattern{scale=0.5}{3}{1/1,2/2,3/3}{0/0,0/1,0/2,1/0,1/1, 1/2,2/0,2/1,2/2,3/3} \sim_d \hspace{-0.15cm} \pattern{scale=0.5}{3}{1/1,2/3,3/2}{0/0,0/1,0/2,1/0,1/1, 1/2,2/0,2/1,2/2,3/3}$ follows from our proof of Corollary~\ref{two-pat-length-2} of the equidistribution of  $\pattern{scale = 0.5}{2}{1/1,2/2}{0/0,0/1,1/0,1/1,2/2}\sim_d\hspace{-0.15cm} \pattern{scale = 0.5}{2}{1/2,2/1}{0/0,0/1,1/0,1/1,2/2}$ applied to  each $A_i$ (the permuted elements in $A_i$ collectively occupy the same positions and have the same values, so conflict like creating/removing an extra occurrence can occur with the shaded box (2,2) giving restrictions outside of $A_i$). 
\end{proof}

To illustrate the proof of Theorem~\ref{9-2s}, see the example in Figure~\ref{fig-3-4} giving the permutation $\pi=(10)47986152$ with $x_1=4, x_2=1$. We can see that $A_1=7986$ and $A_2=23$ in $\pi$ are replaced, respectively, by $A_1'=9867$ and $A_2'=32$ in $\pi'$, by doing the same procedure as Corollary~\ref{two-pat-length-2}. Note that the occurrences 479, 478 and 123 of $\pattern{scale=0.5}{3}{1/1,2/2,3/3}{0/0,0/1,0/2,1/0,1/1, 1/2,2/0,2/1,2/2,3/3}$  in $\pi$ are replaced, respectively, by the occurrences 498, 486 and 132 of $\pattern{scale=0.5}{3}{1/1,2/3,3/2}{0/0,0/1,0/2,1/0,1/1, 1/2,2/0,2/1,2/2,3/3}$ in $\pi'$, while the occurrence 476 of $\pattern{scale=0.5}{3}{1/1,2/3,3/2}{0/0,0/1,0/2,1/0,1/1, 1/2,2/0,2/1,2/2,3/3}$ in $\pi$ is replaced by the occurrence 467 of $\pattern{scale=0.5}{3}{1/1,2/2,3/3}{0/0,0/1,0/2,1/0,1/1, 1/2,2/0,2/1,2/2,3/3}$ in $\pi'$.

\begin{figure}
	\begin{center}
		
		\begin{tabular}{ccc}
			
			\begin{tikzpicture}[scale=0.25] 
			\tikzset{    
				grid/.style={      
					draw,      
					step=1cm,      
					gray!100,     
					very thin,      
				}, 
				cell/.style={    
					draw,    
					anchor=center,  
					text centered,    
				},  
				graycell/.style={ 
					fill=gray!40,   
					draw=none,   
					minimum width=1cm, 
					minimum height=1cm,   
					anchor=south west,   
				}
			}  
			
			\draw[grid] (0,0) grid (9,9);  
			%
			\filldraw[black] (0,9) circle (8pt);
  \draw[thick] (1,3) circle (8pt); \draw[thick] (1,3) circle (2pt); 
  \draw[thick] (2,6) circle (8pt);  \filldraw[white] (2,6) circle (6pt);
  \draw[thick] (3,8) circle (8pt);  \filldraw[white] (3,8) circle (6pt);
  \draw[thick] (4,7) circle (8pt);  \filldraw[white] (4,7) circle (6pt);
  \draw[thick] (5,5) circle (8pt);  \filldraw[white] (5,5) circle (6pt);
  \draw[thick] (6,0) circle (8pt);  \draw[thick] (6,0) circle (2pt);
  \filldraw[black] (7,4) circle (8pt);  
  \draw[thick] (8,1) circle (8pt); \filldraw[white] (8,1) circle (6pt); 
  \draw[thick] (9,2) circle (8pt); \filldraw[white] (9,2) circle (6pt);

    \draw (0.5,2.5)--(0.5,8.5)--(5.4,8.5)--(5.4,2.5)--(0.5,2.5);  
     \draw (5.6,-0.5)--(5.6,3.5)--(9.5,3.5)--(9.5,-0.5)--(5.6,-0.5);

			\end{tikzpicture} 
			
			& \ & 
			\begin{tikzpicture}[scale=0.25] 
			\tikzset{    
				grid/.style={      
					draw,      
					step=1cm,      
					gray!100,     
					very thin,      
				}, 
				cell/.style={    
					draw,    
					anchor=center,  
					text centered,    
				},  
				graycell/.style={ 
					fill=gray!40,   
					draw=none,   
					minimum width=1cm, 
					minimum height=1cm,   
					anchor=south west,   
				}
			}  
			
			\draw[grid] (0,0) grid (9,9);  
			%
			\filldraw[black] (0,9) circle (8pt);
  \draw[thick] (1,3) circle (8pt); \draw[thick] (1,3) circle (2pt); 
  \draw[thick] (2,8) circle (8pt);  \filldraw[white] (2,8) circle (6pt);
  \draw[thick] (3,7) circle (8pt);  \filldraw[white] (3,7) circle (6pt);
  \draw[thick] (4,5) circle (8pt);  \filldraw[white] (4,5) circle (6pt);
  \draw[thick] (5,6) circle (8pt);  \filldraw[white] (5,6) circle (6pt);
  \draw[thick] (6,0) circle (8pt);  \draw[thick] (6,0) circle (2pt);
  \filldraw[black] (7,4) circle (8pt);  
  \draw[thick] (8,2) circle (8pt); \filldraw[white] (8,2) circle (6pt); 
  \draw[thick] (9,1) circle (8pt); \filldraw[white] (9,1) circle (6pt);

    \draw (0.5,2.5)--(0.5,8.5)--(5.4,8.5)--(5.4,2.5)--(0.5,2.5);  
     \draw (5.6,-0.5)--(5.6,3.5)--(9.5,3.5)--(9.5,-0.5)--(5.6,-0.5);

			\end{tikzpicture}
		\end{tabular}
	\end{center}
	
	\vspace{-0.5cm}
	
	\caption{Permutations $\pi=(10)479861523$ (to the left) and $\pi'=(10)498671532$ (to the right) illustrating the proof of Theorem~\ref{9-2s}. The circled dots represent the first elements of occurrences of the patterns (certain left-to-right minima in $\pi$), and the white dots represent the second and third elements of occurrences of one of the two patterns.}\label{fig-3-4}
\end{figure}
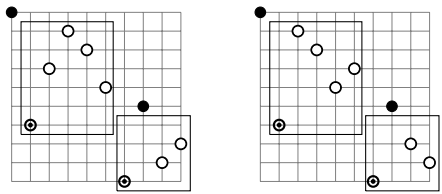

\begin{thm}\label{thm-L-shape-2}
We have $\pattern{scale = 0.5}{3}{1/1,2/2,3/3}{0/0,0/2,2/0} \sim_d \hspace{-0.15cm} \pattern{scale = 0.5}{3}{1/1,2/3,3/2}{0/0,0/2,2/0}$.
\end{thm}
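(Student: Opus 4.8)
The plan is to imitate the proof of Theorem~\ref{thm-L-shape} almost verbatim, the only genuinely new ingredient being a structural observation that compensates for the two boxes $\boks{0}{1}$ and $\boks{1}{0}$ that are no longer shaded. As in that proof, let $\pi=\pi_1\cdots\pi_n\in S_n$, let $x_1>x_2>\cdots>x_k$ be its left-to-right minima, set $x_0:=n+1$, and let $X_i:=\{x_i+1,\ldots,x_{i-1}-1\}$ for $1\le i\le k$, so that the $X_i$ partition the set of non-left-to-right-minimum values. The box $\boks{0}{0}$ still forces every occurrence of $p_1$ or $p_2$ to begin with some $x_i$; and the second and third elements $\pi_b,\pi_c$ of such an occurrence lie to the right of $x_i$ and exceed $x_i$, hence are not left-to-right minima, so each of $\pi_b,\pi_c$ lies in some block $X_l$ with $l\le i$.

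The one new lemma I would establish is: \emph{in any occurrence $x_i\pi_b\pi_c$ of $p_1$ or $p_2$, the values $\pi_b$ and $\pi_c$ lie in the same block $X_l$.} Suppose instead $\pi_b\in X_p$ and $\pi_c\in X_q$ with $p\neq q$ and $p,q\le i$; put $m:=\min(p,q)$, so that $m<i$ (if $m=i$ then $p=q=i$). Since blocks with a smaller index consist of larger values, a one-line comparison of the defining intervals shows that $x_m$ lies strictly between $\pi_b$ and $\pi_c$, i.e.\ $x_m\in(\min(\pi_b,\pi_c),\max(\pi_b,\pi_c))$. As $m<i$ we have $x_m>x_i$, so $x_m$ occurs to the left of $x_i$ in $\pi$. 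But for both $p_1$ and $p_2$ the box $\boks{0}{2}$ of the occurrence covers exactly the positions to the left of $x_i$ carrying a value in $(\min(\pi_b,\pi_c),\max(\pi_b,\pi_c))$; thus $x_m$ would lie inside $\boks{0}{2}$, contradicting that $x_i\pi_b\pi_c$ is an occurrence.

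Granting this lemma, I would define the complementation of a block $X_i$ exactly as in the proof of Theorem~\ref{thm-L-shape} (replace the $\ell$-th smallest value of $X_i$ appearing in $\pi$ by the $(|X_i|+1-\ell)$-th smallest value of $X_i$) and apply it to all blocks simultaneously, obtaining $\pi'$. This operation only rearranges, within each $X_i$, the values occupying a fixed set of positions, so it maps $S_n$ to $S_n$, fixes every $x_i$, and preserves the entire sequence of left-to-right minima (any value that was smaller than everything to its left still is, since all values of a given block lie on one side of any left-to-right minimum). Given an occurrence $x_i\pi_b\pi_c$, the lemma puts $\pi_b,\pi_c$ in one block $X_l$, on which complementation is order-reversing; hence $x_i$ is untouched while the relative order of $\pi_b,\pi_c$ is flipped, so a $p_1$-occurrence becomes a $p_2$-occurrence and conversely. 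One then checks the three boxes stay empty in $\pi'$: $\boks{0}{0}$ because left-to-right minima are preserved; $\boks{2}{0}$ because every position strictly between $b$ and $c$ carries a value $>x_i$ both in $\pi$ and in $\pi'$; and $\boks{0}{2}$ because the relevant value-gap of the occurrence lies entirely inside $X_l$ and complementation, being order-reversing on $X_l$, turns ``no element left of $x_i$ has a value in the old gap'' into ``no element left of $x_i$ has a value in the new gap.'' Since the operation applied twice returns $\pi$, it is a bijection on $S_n$ interchanging occurrences of $p_1$ and $p_2$, which yields the (joint) equidistribution.

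The step I expect to be the crux is the lemma of the second paragraph. In Theorem~\ref{thm-L-shape} an occurrence beginning at $x_i$ was confined, both positionally and value-wise, to a single block $A_i\subseteq X_i$; with the weaker shading here an occurrence can be positionally spread out (for instance the last two elements of the occurrence $2\,3\,4$ of $p_1$ in $\pi=2134$ lie to the right of the left-to-right minimum $1$, which is smaller than the first element $2$), and what must be shown is that nonetheless the two non-initial \emph{values} cannot straddle a block boundary --- and this is precisely where the box $\boks{0}{2}$ is used. Everything after the lemma, including the $\boks{0}{2}$ bookkeeping for $\pi'$, is a routine transcription of the proof of Theorem~\ref{thm-L-shape}.
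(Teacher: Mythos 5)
Your proposal is correct and follows essentially the same route as the paper: the paper's proof also reduces to complementing the value blocks $X_i$ (there written $A_i=X_i$) between consecutive left-to-right minima, with the key structural point being exactly your lemma that the second and third elements of an occurrence cannot lie in different blocks (a fact the paper asserts with reference to its Figure, while you justify it explicitly via the shaded box $\boks{0}{2}$). The remaining verifications (preservation of left-to-right minima, swapping of occurrences, involutivity) match the paper's argument.
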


\begin{proof}
The differences from the proof of Theorem~\ref{thm-L-shape} are that now 
\begin{itemize}
\item $x_i$ does not have to be the only element in $\pi$ in the South-West direction relative to $A_i$, allowing multiple left-to-right minima to potentially be the starting points of occurrences of the patterns for elements within the same $A_i$;
\item $A_i=X_i$; specifically, $A_i$'s can overlap vertically, and not all elements within an $A_i$ need to be involved in occurrences of the patterns (some $A_i$'s, even those of length greater than 2, might not contribute any occurrences of the patterns). However, the second and third elements in an occurrence of a pattern cannot be in different $A_i$'s (as is the case in the proof of Theorem~\ref{thm-L-shape}).
\end{itemize} 
Refer to Figure~\ref{fig-5} for a schematic structure of a permutation $\pi$.   The observations above do not affect our arguments in the proof of Theorem~\ref{thm-L-shape},  since complementation of $A_i$'s swaps occurrences of $\pattern{scale = 0.5}{3}{1/1,2/2,3/3}{0/0,0/2,2/0}$ and $\pattern{scale = 0.5}{3}{1/1,2/3,3/2}{0/0,0/2,2/0}$, and does not introduce any new occurrences of these patterns. 
\end{proof}

\begin{figure}  
\begin{center}  
\begin{tikzpicture}[scale=0.7]    
\tikzset{      
   grid/.style={        
      draw,        
      step=1cm,        
      gray!100,    
      thin,        
    },   
    cell/.style={      
      draw,      
      anchor=center,    
      text centered,    
     },    
    graycell/.style={   
      fill=gray!20,
      draw=none,     
      minimum width=1cm,   
      minimum height=1cm,     
      anchor=south west,            
    }    
  } 
      
  \fill[graycell] (0,0) rectangle (1,1);
  \fill[graycell] (0,1) rectangle (1,2);
  \fill[graycell] (0,2) rectangle (1,3);
  \fill[graycell] (1,0) rectangle (2,1);
  \fill[graycell] (1,1) rectangle (2,2);
  \fill[graycell] (2,0) rectangle (3,1);

\draw[thick] (0.1,3.1) rectangle (4,3.9);
 \draw[thick] (1.1,2.1) rectangle (4,2.9);
\draw[thick] (3.1,0.1) rectangle (4,0.9);
  
 \draw[grid] (0,0) grid (4,4);  

  \node at (2,3.5) {$A_1$};  
  \node at (2.5,2.5) {$A_2$};  
  \node at (3.5,0.5) {$A_k$};
  \node[anchor=west] at (0,2.7) {\footnotesize{$x_1$}}; 
  \node[anchor=west] at (1,1.7) {\footnotesize{$x_2$}}; 
  \node[anchor=west] at (2.8,-0.3) {\footnotesize{$x_k=1$}}; 
 
  \filldraw[black] (0,3) circle (2.5pt);  
  \filldraw[black] (1,2) circle (2.5pt); 
  \filldraw[black] (3,0) circle (2.5pt);  
 
 \node[anchor=center, rotate=-45] at (2.5,1.5) {$\ldots$};
\end{tikzpicture} 

\vspace{-0.8cm}

\end{center}
\caption{The structure of permutations in the proof of Theorem~\ref{thm-L-shape-2}.}\label{fig-5}
\end{figure}

In the following theorem, we once more utilize the fact that the shading in the North-East area, relative to the minimum element in a pattern, exhibits symmetry with respect to a horizontal line.

\begin{thm}\label{thm-L-shape-3}
We have $\pattern{scale = 0.5}{3}{1/1,2/2,3/3}{0/0,0/2,2/0,2/2} \sim_d \pattern{scale = 0.5}{3}{1/1,2/3,3/2}{0/0,0/2,2/0,2/2}$, 
$\pattern{scale=0.5}{3}{1/1,2/2,3/3}{0/0,0/2,1/2, 2/0,2/1,2/3,3/2} \sim_d \hspace{-0.15cm} \pattern{scale=0.5}{3}{1/1,2/3,3/2}{0/0,0/2,1/2, 2/0,2/1,2/3,3/2}$,
$\pattern{scale=0.5}{3}{1/1,2/2,3/3}{0/0,0/2,1/1, 1/3,2/0,3/1,3/3} \sim_d \hspace{-0.15cm} \pattern{scale=0.5}{3}{1/1,2/3,3/2}{0/0,0/2,1/1, 1/3,2/0,3/1,3/3}$, \\
$\pattern{scale=0.5}{3}{1/1,2/2,3/3}{0/0,0/2,1/1, 1/3,2/0,2/2,3/1,3/3} \sim_d \hspace{-0.15cm} \pattern{scale=0.5}{3}{1/1,2/3,3/2}{0/0,0/2,1/1, 1/3,2/0,2/2,3/1,3/3}$,
$\pattern{scale=0.5}{3}{1/1,2/2,3/3}{0/0,0/2,1/2, 2/0,2/1,2/2,2/3,3/2} \sim_d \hspace{-0.15cm} \pattern{scale=0.5}{3}{1/1,2/3,3/2}{0/0,0/2,1/2, 2/0,2/1,2/2,2/3,3/2}$, 
$\pattern{scale=0.5}{3}{1/1,2/2,3/3}{0/0,0/2,1/1, 1/2,1/3,2/0,2/1,2/3,3/1,3/2,3/3} \sim_d \hspace{-0.15cm} \pattern{scale=0.5}{3}{1/1,2/3,3/2}{0/0,0/2,1/1, 1/2,1/3,2/0,2/1,2/3,3/1,3/2,3/3}$, and $\pattern{scale=0.5}{3}{1/1,2/2,3/3}{0/0,0/2,1/1,1/2,1/3, 2/0,2/1,2/2,2/3,3/1,3/2,3/3}\pattern{scale=0.5}{3}{1/1,2/3,3/2}{0/0,0/2,1/1,1/2,1/3, 2/0,2/1,2/2,2/3,3/1,3/2,3/3}$.
\end{thm}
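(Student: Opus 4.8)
The plan is to reuse, essentially verbatim, the bijection from the proof of Theorem~\ref{thm-L-shape-2}, incorporating the one refinement that already carried the proof of Theorem~\ref{thm-L-shape} over to Theorem~\ref{thm-L-shape-1}. Fix $\pi=\pi_1\cdots\pi_n\in S_n$, let $x_1>x_2>\cdots>x_k$ be its left-to-right minima, put $x_0:=n+1$, and set $A_i=X_i=\{x_i+1,\dots,x_{i-1}-1\}$ for $1\le i\le k$. First I would invoke the structural analysis from the proof of Theorem~\ref{thm-L-shape-2}: since the boxes $\boks{0}{0}$, $\boks{0}{2}$ and $\boks{2}{0}$ are shaded in every one of the seven patterns --- exactly as in $\pattern{scale=0.5}{3}{1/1,2/2,3/3}{0/0,0/2,2/0}$ and $\pattern{scale=0.5}{3}{1/1,2/3,3/2}{0/0,0/2,2/0}$ --- each occurrence of any of them begins at some $x_i$ and has its two remaining elements inside a single block $A_j$; the blocks may overlap positionally (see Figure~\ref{fig-5}), but no occurrence straddles two of them.

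The only modification with respect to Theorem~\ref{thm-L-shape-2} is that inside a block one swaps not every pair of elements, but only those pairs that form an occurrence of the length-$2$ mesh pattern obtained from the given length-$3$ pattern by restricting its shading to the North-East area relative to the minimum element. The main step is therefore a pattern-by-pattern check: for each of the seven patterns this North-East shading is symmetric with respect to the horizontal centre line, and the seven resulting length-$2$ pairs are precisely the symmetrically shaded length-$2$ pairs already listed in the proof of Theorem~\ref{thm-L-shape-1}, ranging from $\pattern{scale=0.5}{2}{1/1,2/2}{1/1}$ versus $\pattern{scale=0.5}{2}{1/2,2/1}{1/1}$ up to the fully shaded pair. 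Because each such length-$2$ pair is interchanged by the operation of complementing the values carried by $X_i=A_i$ (the operation defined in the proof of Theorem~\ref{thm-L-shape}), that complementation converts each occurrence of the ``$123$'' version of a length-$3$ pattern inside a block into an occurrence of the corresponding ``$132$'' version, and conversely.

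Finally I would repeat the verification from the proof of Theorem~\ref{thm-L-shape} that nothing is created or destroyed outside this controlled exchange: only values inside some $X_i$ move, and a value lying in a shaded box that witnessed a \emph{non}-occurrence of a pair of elements keeps lying there after complementation, so no new occurrence of either length-$3$ pattern can appear in the image permutation. Applying the complementation to all $X_i$ simultaneously is thus an involution of $S_n$ that, for each of the seven pairs, interchanges the number of occurrences of the ``$123$'' pattern with the number of occurrences of the ``$132$'' pattern; this gives the (joint) equidistribution of all seven pairs at once. I expect the genuine work to be confined to those seven elementary symmetry checks; granting them, the bijection and its correctness are literally those of Theorems~\ref{thm-L-shape} and~\ref{thm-L-shape-2}.
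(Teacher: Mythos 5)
Your proposal is correct and coincides with the paper's own argument: the paper proves this theorem exactly by combining the block structure $A_i=X_i$ of Theorem~\ref{thm-L-shape-2} (Figure~\ref{fig-5}) with the observation from Theorem~\ref{thm-L-shape-1} that the North-East shading of each pattern is horizontally symmetric, so that block-wise complementation swaps the induced length-$2$ occurrences. Your explicit check that the seven induced length-$2$ pairs are those appearing in Theorem~\ref{thm-L-shape-1} is exactly the verification the paper leaves implicit.
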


\begin{proof}
Our arguments here are analogous to those in Theorem~\ref{thm-L-shape-1}, but the structure of permutations corresponds to the one considered in Theorem~\ref{thm-L-shape-2} and depicted in Figure~\ref{fig-5}.
\end{proof}

%

Shading additional boxes $(0,3)$ and $(3,0)$ in the patterns from Theorem~\ref{thm-L-shape-3}, we observe through computer experiments that $\pattern{scale = 0.5}{3}{1/1,2/2,3/3}{0/0,0/2,0/3,2/0,2/2,3/0} \not\sim_d \pattern{scale = 0.5}{3}{1/1,2/3,3/2}{0/0,0/2,0/3,2/0,2/2,3/0}$, $\pattern{scale=0.5}{3}{1/1,2/2,3/3}{0/0,0/2,0/3,1/1, 1/3,2/0,3/0,3/1,3/3} \not\sim_d \hspace{-0.15cm} \pattern{scale=0.5}{3}{1/1,2/3,3/2}{0/0,0/2,0/3,1/1, 1/3,2/0,3/0,3/1,3/3}$, and $\pattern{scale=0.5}{3}{1/1,2/2,3/3}{0/0,0/2,0/3,1/1, 1/3,2/0,2/2,3/0,3/1,3/3} \not\sim_d \hspace{-0.15cm} \pattern{scale=0.5}{3}{1/1,2/3,3/2}{0/0,0/2,0/3,1/1, 1/3,2/0,2/2,3/0,3/1,3/3}$. Additionally, $\pattern{scale=0.5}{3}{1/1,2/2,3/3}{0/0, 0/2,0/3,2/0,3/0}\not\sim_d\pattern{scale=0.5}{3}{1/1,2/3,3/2}{0/0, 0/2,0/3,2/0,3/0}$. As mentioned above, $\pattern{scale=0.5}{3}{1/1,2/2,3/3}{0/0, 0/2,0/3,2/0,3/0}$  and $\pattern{scale=0.5}{3}{1/1,2/3,3/2}{0/0, 0/2,0/3,2/0,3/0}$ (resp., $\pattern{scale=0.5}{3}{1/1,2/2,3/3}{0/0, 0/2,0/3,2/0,2/2,3/0}$  and $\pattern{scale=0.5}{3}{1/1,2/3,3/2}{0/0, 0/2,0/3,2/0,2/2,3/0}$) are equidistributed for $n\leq 7$, but not for $n=8$. However, other patterns in Theorem~\ref{thm-L-shape-3} do have equidistribution, as stated in the next theorem. Note that a similar argument as that for pair 10 can be applied to prove $\pattern{scale=0.5}{3}{1/1,2/2,3/3}{0/0,0/2,0/3,1/1,1/2,1/3, 2/0,2/1,2/2,2/3,3/0,3/1,3/2,3/3}\sim_d \hspace{-0.15cm} \pattern{scale=0.5}{3}{1/1,2/3,3/2}{0/0,0/2,0/3,1/1,1/2,1/3, 2/0,2/1,2/2,2/3,3/0,3/1,3/2,3/3}$. Finally, we refer to Remark~\ref{subtle-rem} explaining why the complement does not work for all patterns in Theorem~\ref{thm-L-shape-3} when boxes (0,3) and (3,0) are shaded. 

\begin{figure}[t]  
\begin{center}  

\begin{tabular}{cc}

\begin{tikzpicture}[scale=0.7]  
\tikzset{      
   grid/.style={        
      draw,        
      step=1cm,        
      gray!100,    
      thin,        
    },   
    cell/.style={      
      draw,      
      anchor=center,    
      text centered,    
     },    
    graycell/.style={   
      fill=gray!20,
      draw=none,     
      minimum width=1cm,   
      minimum height=1cm,     
      anchor=south west,            
    }    
  } 
  
   \fill[graycell] (0,0) rectangle (1,1);  
    \fill[graycell] (0,2) rectangle (1,4);    
   \fill[graycell] (2,0) rectangle (4,1); 
  
   \fill[graycell] (2,1) rectangle (3,2); 
  \fill[graycell] (1,2) rectangle (2,3); 
  \fill[graycell] (3,2) rectangle (4,3); 
  \fill[graycell] (2,3) rectangle (3,4); 
 
 \draw[grid] (0,0) grid (4,4);  

  \node at (2.5,2.5) {$A_1$};  
   \node[anchor=west] at (1,0.8) {\footnotesize{$x_1$}}; 
  
  \node[cell,scale=2.6] at (2.5,2.5) {};  
 
  \filldraw[black] (1,1) circle (2pt);  
 \end{tikzpicture} 
 
 &
 
 \begin{tikzpicture}[scale=0.7]  
\tikzset{      
   grid/.style={        
      draw,        
      step=1cm,        
      gray!100,    
      thin,        
    },   
    cell/.style={      
      draw,      
      anchor=center,    
      text centered,    
     },    
    graycell/.style={   
      fill=gray!20,
      draw=none,     
      minimum width=1cm,   
      minimum height=1cm,     
      anchor=south west,            
    }    
  } 
  
     \fill[graycell] (0,0) rectangle (1,1);  
    \fill[graycell] (0,2) rectangle (1,4);    
   \fill[graycell] (2,0) rectangle (4,1); 
  
     \fill[graycell] (1,1) rectangle (4,2); 
  \fill[graycell] (1,2) rectangle (2,3); 
  \fill[graycell] (3,2) rectangle (4,3); 
  \fill[graycell] (1,3) rectangle (4,4); 

 \draw[grid] (0,0) grid (4,4);  

  \node at (2.5,2.5) {$A_1$};  
   \node[anchor=west] at (1,0.8) {\footnotesize{$x_1$}}; 
  
  \node[cell,scale=2.6] at (2.5,2.5) {};  
 
  \filldraw[black] (1,1) circle (2pt);  
 \end{tikzpicture} 

%
%
%
%
%
%
%

 \end{tabular}

\vspace{-0.5cm}

\end{center}
\caption{Permutations in the proof of Theorem~\ref{thm-L-shape-3-(0,3)-shaded} corresponding to the respective patterns. $A_1$ is of length at least 2.}\label{fig-6}
\end{figure}
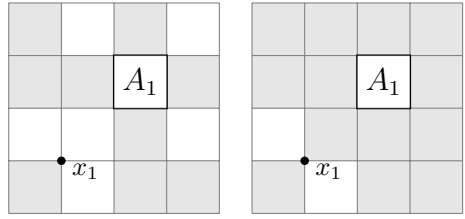

\begin{thm}\label{thm-L-shape-3-(0,3)-shaded}
We have 
$\pattern{scale=0.5}{3}{1/1,2/2,3/3}{0/0,0/2,0/3,1/2, 2/0,2/1,2/3,3/0,3/2} \sim_d \hspace{-0.15cm} \pattern{scale=0.5}{3}{1/1,2/3,3/2}{0/0,0/2,0/3,1/2, 2/0,2/1,2/3,3/0,3/2}$,
$\pattern{scale=0.5}{3}{1/1,2/2,3/3}{0/0,0/2,0/3,1/2, 2/0,2/1,2/2,2/3,3/0,3/2} \sim_d \hspace{-0.15cm} \pattern{scale=0.5}{3}{1/1,2/3,3/2}{0/0,0/2,0/3,1/2, 2/0,2/1,2/2,2/3,3/0,3/2}$, 
$\pattern{scale=0.5}{3}{1/1,2/2,3/3}{0/0,0/2,0/3,1/1, 1/2,1/3,2/0,2/1,2/3,3/0,3/1,3/2,3/3} \sim_d \hspace{-0.15cm} \pattern{scale=0.5}{3}{1/1,2/3,3/2}{0/0,0/2,0/3,1/1, 1/2,1/3,2/0,2/1,2/3,3/0,3/1,3/2,3/3}$, and $\pattern{scale=0.5}{3}{1/1,2/2,3/3}{0/0,0/2,0/3,1/1,1/2,1/3, 2/0,2/1,2/2,2/3,3/0,3/1,3/2,3/3}\sim_d \hspace{-0.15cm} \pattern{scale=0.5}{3}{1/1,2/3,3/2}{0/0,0/2,0/3,1/1,1/2,1/3, 2/0,2/1,2/2,2/3,3/0,3/1,3/2,3/3}$.
\end{thm}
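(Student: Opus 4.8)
The plan is to run, essentially verbatim, the bijection from the proof of Theorem~\ref{thm-L-shape-3} (which itself copies that of Theorems~\ref{thm-L-shape-1} and \ref{thm-L-shape-2}): given $\pi\in S_n$, let $x_1>\cdots>x_k$ be its left-to-right minima, split the values into the blocks $X_i=\{x_i+1,\dots,x_{i-1}-1\}$ (with $x_0:=n+1$), note that here $A_i=X_i$, and complement every $X_i$; this interchanges occurrences of the $123$-shaped pattern with occurrences of the $132$-shaped pattern. What is new is that shading the two extra boxes $(0,3)$ and $(3,0)$ drastically restricts which permutations carry any occurrence at all. Box $(0,3)$ forbids an element of value exceeding the third entry of an occurrence to its left, and box $(3,0)$ forbids an element of value below the first entry of an occurrence to its right; combined with the boxes already shaded, these constraints force the shape depicted in Figure~\ref{fig-6}, in which all occurrences lie inside a single block $A_1$ with a single left-to-right minimum $x_1$ below and to its left and nothing above-left or below-right of $A_1$. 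My first step is to make this structural reduction precise, along the lines sketched in the text preceding the theorem, and then to work only with permutations of that shape.

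On this restricted shape I complement the single block $X_1$ and check, box by box and for each of the four patterns, that complementation neither creates nor destroys an occurrence. The emptiness conditions attached to boxes in columns $1$ and $2$ and in row $2$ refer to values inside the range of $X_1$, and the complementation correspondence within $X_1$ --- a value strictly between the two larger entries of a $123$-occurrence goes to a value strictly between the two larger entries of the matching $132$-occurrence, and likewise at the extremes --- transfers them verbatim; the conditions from column $0$ are vacuous because nothing sits above-left of $A_1$, and the conditions from box $(3,0)$ and box $(0,0)$ use the \emph{fixed} threshold $x_1$ together with elements outside $X_1$, so they are untouched. The only possible trouble is a box --- such as $(3,3)$ --- whose emptiness threshold is the \emph{third} entry of a $123$-occurrence but the \emph{second} entry of the corresponding $132$-occurrence; this mismatch is precisely the obstruction behind the three non-equidistributions $\not\sim_d$ recorded just before the theorem and behind Remark~\ref{subtle-rem}. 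For our four patterns it does not bite: box $(1,2)$ --- which is fixed by reflection in the anti-diagonal, so its shading is legitimate under the paper's standing assumption --- is shaded in all four, which removes the awkward elements between the first two entries of an occurrence; and for the two patterns whose shading in addition fills all of row $3$ and all of column $3$ one first observes that the third entry of a $123$-occurrence (respectively the larger of the last two entries of a $132$-occurrence) is forced to equal $n$, trivializing every ``larger than $\tau_3$'' condition as well as the position of that entry.

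For the last pattern, $\pattern{scale=0.5}{3}{1/1,2/2,3/3}{0/0,0/2,0/3,1/1,1/2,1/3, 2/0,2/1,2/2,2/3,3/0,3/1,3/2,3/3}$, the structure degenerates completely: all of row $3$ is shaded, so the third entry of a $123$-occurrence must be $n$; all of column $3$ is shaded, so nothing follows it; and all of column $2$ is shaded, so the second entry comes immediately before it. Symmetrically, a $132$-occurrence has $n$ in the penultimate position with nothing after the last entry. Hence every occurrence is of the form $\pi_i\pi_{n-1}\pi_n$ with $n\in\{\pi_{n-1},\pi_n\}$, and the map that swaps $\pi_{n-1}$ and $\pi_n$ (fixing every permutation not of this last-two-entries shape), exactly as in the treatment of pair~10 in Table~\ref{tab-1}, is the desired bijection; this yields an alternative, shorter proof for that case.

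The main obstacle I anticipate is the bookkeeping in the second step: making the ``collapse to a single block $A_1$'' argument airtight and then verifying, separately for each shaded box and each of the four patterns, that complementing $X_1$ preserves the occurrence count. In particular, the delicate point is to pin down exactly why the shading of box $(1,2)$, together with the specific row-$3$ and column-$3$ shadings, is what prevents the failure exhibited by the three $\not\sim_d$ companions listed before the theorem.
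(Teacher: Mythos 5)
Your proposal hinges on importing the block decomposition of Theorems~\ref{thm-L-shape-2} and~\ref{thm-L-shape-3} (``$A_i=X_i$, complement every $X_i$'') and, after the structural collapse, ``complementing the single block $X_1$''. That is exactly the step the paper's proof refuses to take: its map complements $A_1$, the set of elements that actually serve as second and third entries of occurrences, and it stresses ``complementing $A_1$ (not $X_1$ as is done previously!)''. The distinction is not cosmetic, and your map fails. Take pair 41 of Table~\ref{tab-5} and $\pi=23145$. Occurrences here start at \emph{two} left-to-right minima (so your claim of ``a single left-to-right minimum $x_1$'' is already off; the paper only takes $x_1$ to be the largest minimum starting an occurrence, and further minima may be present and may themselves start occurrences): $\pi$ has exactly the two occurrences $2\cdot 4\cdot 5$ and $1\cdot 4\cdot 5$ of the $123$-pattern and none of the $132$-pattern. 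Complementing $X_1=\{3,4,5\}$ gives $\pi'=25143$, which has the single occurrence $2\cdot 4\cdot 3$ of the $132$-pattern and nothing else: the bystander $3$, which lies in no occurrence of $\pi$, is sent to $5$, which now sits before the $1$ and above $4$, i.e., in the shaded box $(0,3)$ of the would-be occurrence $1\cdot 4\cdot 3$. Your map therefore does not even preserve the total number of occurrences, so it cannot establish the (joint) equidistribution; the same permutation defeats it for pairs 42 and 43 as well, while complementing only $A_1=\{4,5\}$ sends $23145$ to $23154$, which does carry two occurrences of the $132$-pattern. The underlying reason is that the boxes $(0,2)$ and $(0,3)$ force the second and third entries of \emph{every} occurrence to lie above all earlier entries, hence in the top value band; the banded decomposition you import is simply not aligned with the occurrences, and complementing a whole band drags elements lying below or above the occurrences into shaded boxes of occurrences rooted at lower left-to-right minima --- the mirror image of the phenomenon in Remark~\ref{subtle-rem}.

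So the core of your plan is the map the paper explicitly rejects, and the verification you postpone (``check box by box that complementation neither creates nor destroys an occurrence'') is precisely where everything breaks; note also that even for the paper's choice of complementing $A_1$ only, that verification must in addition exclude \emph{new} occurrences whose entries lie partly outside the complemented block, a point your sketch never addresses. What does survive is your separate argument for the last pair: for pair 45 every occurrence is indeed of the form $\pi_i\pi_{n-1}\pi_n$ with $n\in\{\pi_{n-1},\pi_n\}$, and swapping the last two entries works, much as for pairs 9--11 in Table~\ref{tab-1}. But that covers one of the four pairs; for pairs 41--43 the proposal as written is broken, whereas the paper proceeds via the structure of Figure~\ref{fig-6} and complementation of $A_1$ alone.
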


\begin{proof}
Our arguments here are analogous to those in Theorem~\ref{thm-L-shape-3}, but in this case we have at most one $A_i$, as depicted in Figure~\ref{fig-6}: the picture to the left (resp., to the right) corresponds to pairs 41 and 42 (resp., 43 and 45). In both pictures, we assume $x_1$ to be the largest left-to-right minima starting occurrences of the patterns.  Note that we need to unshade the box adjacent to $x_1$ in its bottom-right corner in the leftmost picture because the box (3,0) in the patterns is shaded. Thus, the unshaded area can contain left-to-right minima not involved in occurrences of the patterns. Recall that if $A_1$ has length at most 1, such a permutation will have no occurrences of the patterns and is mapped to itself. Otherwise, by complementing $A_1$ (not $X_1$ as is done previously!), we obtain the desired result.
\end{proof}

\begin{figure}
\begin{center}

\begin{tabular}{ccc}

\begin{tikzpicture}[scale=0.25] 
\tikzset{    
   grid/.style={      
      draw,      
      step=1cm,      
      gray!100,     
      very thin,      
    }, 
    cell/.style={    
      draw,    
      anchor=center,  
      text centered,    
     },  
    graycell/.style={ 
      fill=gray!40,   
      draw=none,   
      minimum width=1cm, 
      minimum height=1cm,   
      anchor=south west,   
    }
  }  
    
  \draw[grid] (0,0) grid (7,7);  

\draw[thick] (0,1) circle (8pt); \draw[thick] (0,1) circle (2pt); 
\filldraw[black] (1,4) circle (8pt); 
\draw[thick] (2,0) circle (8pt); \draw[thick] (2,0) circle (2pt); 
\draw[thick] (3,6) circle (8pt);  \filldraw[white] (3,6) circle (6pt); 
\draw[thick] (4,7) circle (8pt);  \filldraw[white] (4,7) circle (6pt); 
\draw[thick] (5,5) circle (8pt);  \filldraw[white] (5,5) circle (6pt); 
\draw[thick] (6,3) circle (8pt);  \filldraw[white] (6,3) circle (6pt); 
\draw[thick] (7,2) circle (8pt);  \filldraw[white] (7,2) circle (6pt); 

\draw (2.5,1.4) rectangle (7.5,7.5);
  
\end{tikzpicture} 

& \ & 

\begin{tikzpicture}[scale=0.25] 
\tikzset{    
   grid/.style={      
      draw,      
      step=1cm,      
      gray!100,     
      very thin,      
    }, 
    cell/.style={    
      draw,    
      anchor=center,  
      text centered,    
     },  
    graycell/.style={ 
      fill=gray!40,   
      draw=none,   
      minimum width=1cm, 
      minimum height=1cm,   
      anchor=south west,   
    }
  }  
    
  \draw[grid] (0,0) grid (7,7);  

\draw[thick] (0,1) circle (8pt); \draw[thick] (0,1) circle (2pt); 
\filldraw[black] (1,4) circle (8pt); 
\draw[thick] (2,0) circle (8pt); \draw[thick] (2,0) circle (2pt); 
\draw[thick] (3,3) circle (8pt);  \filldraw[white] (3,3) circle (6pt); 
\draw[thick] (4,2) circle (8pt);  \filldraw[white] (4,2) circle (6pt); 
\draw[thick] (5,5) circle (8pt);  \filldraw[white] (5,5) circle (6pt); 
\draw[thick] (6,6) circle (8pt);  \filldraw[white] (6,6) circle (6pt); 
\draw[thick] (7,7) circle (8pt);  \filldraw[white] (7,7) circle (6pt); 

\draw (2.5,1.4) rectangle (7.5,7.5);
  
\end{tikzpicture}

\end{tabular}
\end{center}

\vspace{-0.5cm}

\caption{Permutations $\pi$ (to the left) and $\pi'$ (to the right) supporting Remark~\ref{subtle-rem}. The circled dots represent the left-to-right minima, and the white dots represent the elements in $A_1$.}\label{fig-subtle-remark}
\end{figure}
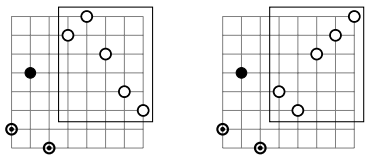

\begin{remark}\label{subtle-rem} It is not entirely clear why the complement does not work for all patterns in Theorem~\ref{thm-L-shape-3} when boxes $(0,3)$ and $(3,0)$ are shaded. The reason for that is rather subtle and lies in the fact that, in the structures in Figure~\ref{fig-6}, $A_1$ must be  above (and to the right) of the only unshaded square in the first column for the complement to work. Otherwise, occurrences of the patterns may become non-occurrences. For example, for the pair $\pattern{scale = 0.5}{3}{1/1,2/2,3/3}{0/0,0/2,0/3,2/0,2/2,3/0}\pattern{scale = 0.5}{3}{1/1,2/3,3/2}{0/0,0/2,0/3,2/0,2/2,3/0}$, a counterexample of length $n=8$ demonstrating this phenomenon is the permutation $\pi=25178643$ mapped to the permutation $\pi'=25143678$; see Figure~\ref{fig-subtle-remark} for the respective permutation diagrams. Here, $A_1=78643$ and the total number of occurrences of the patterns beginning with $x_1=2$ (and defining $A_1$) is preserved. However, $\pi$ has  three occurrences of the patterns beginning with $x_2=1$, namely, $178$, $176$ and $186$, while $\pi'$ has two occurrences of the patterns beginning with $x_2=1$, namely, $167$ and $178$.
\end{remark}

\begin{thm}\label{sim-L-2-patterns}
We have $\pattern{scale=0.5}{3}{1/1,2/2,3/3}{0/0,2/0,1/1,2/1,3/1,0/2,1/2,2/2,3/2,1/3,2/3} \sim_d \hspace{-0.15cm} \pattern{scale=0.5}{3}{1/1,2/3,3/2}{0/0,2/0,1/1,2/1,3/1,0/2,1/2,2/2,3/2,1/3,2/3}$ and $\pattern{scale=0.5}{3}{1/1,2/2,3/3}{0/0,2/0,3/0,1/1,2/1,3/1,0/2,1/2,2/2,3/2,0/3,1/3,2/3} \sim_d \hspace{-0.15cm} \pattern{scale=0.5}{3}{1/1,2/3,3/2}{0/0,2/0,3/0,1/1,2/1,3/1,0/2,1/2,2/2,3/2,0/3,1/3,2/3}$.
\end{thm}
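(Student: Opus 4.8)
\section*{Proof proposal}

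The plan is to imitate the proof of Theorem~\ref{thm-L-shape}, replacing its complementation step by the bijection underlying an auxiliary length-2 equidistribution, much as in Theorems~\ref{9-ls} and~\ref{9-2s}. For both pairs, the shaded box $(0,0)$ forces every occurrence of $p_1$ or $p_2$ in $\pi=\pi_1\cdots\pi_n$ to begin with a left-to-right minimum $x_i$; the shaded column of boxes $(i,2)$, $0\le i\le 3$, forces its second and third elements to have consecutive values; and the shaded row of boxes $(2,j)$, $0\le j\le 3$, forces them to occupy consecutive positions. Writing $B_i$ for the subsequence of elements of $\pi$ lying strictly to the North-East of $x_i$ (to the right in position and above in value), the remaining shaded boxes then pin the second and third elements of the occurrence down to being the two smallest elements of $B_i$, occupying the first two positions of $B_i$, in increasing order for $p_1$ and in decreasing order for $p_2$. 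Phrased differently, restricting the shading of $p_1$ (resp.\ $p_2$) to the North-East region relative to the minimum element produces the length-2 mesh pattern $\pattern{scale=0.5}{2}{1/1,2/2}{0/0,0/1,0/2,1/0,1/1,1/2,2/0,2/1}$ (resp.\ $\pattern{scale=0.5}{2}{1/2,2/1}{0/0,0/1,0/2,1/0,1/1,1/2,2/0,2/1}$), and $\pi$ has an occurrence of $p_1$ (resp.\ $p_2$) starting at $x_i$ exactly when $B_i$ contains an occurrence of that length-2 pattern, in which case such an occurrence is unique.

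The next ingredient is the (immediate) joint equidistribution of these two length-2 patterns: a permutation $\sigma$ of $[m]$ contains an occurrence of $\pattern{scale=0.5}{2}{1/1,2/2}{0/0,0/1,0/2,1/0,1/1,1/2,2/0,2/1}$ (resp.\ of $\pattern{scale=0.5}{2}{1/2,2/1}{0/0,0/1,0/2,1/0,1/1,1/2,2/0,2/1}$) if and only if $\sigma_1\sigma_2=12$ (resp.\ $\sigma_1\sigma_2=21$), and then exactly one; so transposing $\sigma_1$ and $\sigma_2$ when $\{\sigma_1,\sigma_2\}=\{1,2\}$, and fixing $\sigma$ otherwise, is an involution exchanging the two distributions. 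The global map on $S_n$ then performs, for each occurrence of $p_1$ or $p_2$ in $\pi$, this transposition on the two positions occupied by the occurrence's second and third elements (equivalently, it transposes the two smallest elements of every $B_i$ that begins with them). Exactly as in the proof of Theorem~\ref{thm-L-shape}, this operation introduces no new occurrences and destroys none, other than turning each occurrence of $p_1$ into one of $p_2$ and conversely: the two transposed entries are consecutive in both value and position, hence interchangeable from the point of view of any occurrence not using them as its second and third elements. It is therefore an involution establishing the joint equidistribution $(p_1,p_2)\sim_d(p_2,p_1)$, and a fortiori $p_1\sim_d p_2$.

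The step demanding the most care is that, in contrast with Theorem~\ref{thm-L-shape}, the regions $B_i$ need not be pairwise disjoint, so the various transpositions might interfere. The remedy is to show that the pairs of positions associated with two distinct occurrences are either identical --- in which case a single transposition simultaneously flips every occurrence attached to that pair --- or else disjoint and avoiding each other's first elements; an overlap in exactly one position (and likewise a coincidence of one occurrence's first element with another's second or third) is ruled out by the usual shaded-box bookkeeping, since it would force an element into a shaded box. For the second pair, where boxes $(0,3)$ and $(3,0)$ are additionally shaded, one obtains the sharper statement, in the spirit of Theorem~\ref{thm-L-shape-3-(0,3)-shaded}, that all occurrences share a single pair of consecutive positions $(v,v+1)$: the prefix $\pi_1\cdots\pi_{v-1}$ is then a permutation of $\{1,\dots,v-1\}$, the number of occurrences equals the number of its entries that are simultaneously left-to-right minima and right-to-left maxima, and the bijection collapses to the single transposition of $\pi_v$ and $\pi_{v+1}$. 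The subtlety recorded in Remark~\ref{subtle-rem} pertains only to the complement operation and so does not affect the argument above.
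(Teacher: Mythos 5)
Your overall strategy is the paper's: characterize occurrences structurally (the second and third elements are adjacent in position, consecutive in value, and are the two smallest elements North-East of the starting left-to-right minimum; distinct occurrences have identical or disjoint such pairs; a left-to-right minimum can never serve as a second or third element), and then swap the two entries of every such pair. That is precisely the involution in the paper's proof, which likewise builds on the structure from Theorem~\ref{thm-L-shape-2}, and your extra observations for the second pair (all occurrences share one pair of adjacent positions; the count equals the number of prefix entries that are simultaneously left-to-right minima and right-to-left maxima) are correct, if not needed.

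There is, however, one genuinely false step: the claim that $\pi$ has an occurrence of $p_1$ (resp.\ $p_2$) starting at $x_i$ \emph{exactly when} $B_i$ contains an occurrence of the corresponding length-2 mesh pattern, and the parenthetical assertion that your map ``equivalently transposes the two smallest elements of every $B_i$ that begins with them.'' Restricting the shading to the North-East region discards the shaded boxes $(2,0)$ and $(0,2)$ of the length-3 patterns, which constrain elements \emph{outside} $B_i$: elements smaller than $x_i$ lying positionally between the candidate second and third elements, and elements to the left of $x_i$ with value between their values. For instance, in $\pi=341562$ the block $B_1$ for $x_1=3$ is $4,5,6$, and $45$ is an occurrence of your length-2 pattern inside $B_1$; yet $345$ is not an occurrence of $p_1$, because the entry $1$ lies between $4$ and $5$ and hence in the shaded box $(2,0)$ (in fact $\pi$ has no occurrence of either pattern). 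Consequently your two descriptions of the map disagree: the occurrence-based one fixes $\pi$, while the $B_i$-based one would transpose $4$ and $5$; and the ``immediate joint equidistribution of the two length-2 patterns'' cannot be invoked blockwise as stated. The flaw is local and repairable: drop the biconditional and the $B_i$-reformulation, keep only the per-occurrence definition of the swap (your necessary-condition analysis of occurrences is correct and is all that is required), and the argument goes through and coincides with the paper's proof.
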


\begin{proof}
Considering the structure of permutations in Theorem~\ref{thm-L-shape-2}, we observe that for the given patterns, if $ab$ is the second and third elements in an occurrence of a pattern in a permutation $\pi=\pi_1\pi_2\ldots\pi_n$, then $ab=\pi_i\pi_{i+1}=j(j+1)$ for some $i$ and $j$, and $j$ and $j+1$ are two smallest elements in their $A_i$ (in particular, there is only $A_1$ for the second pair of patterns), so that each $A_i$ has at most one occurrence of a pattern, and neither the second nor the third elements in an occurrence can be part of another occurrence. The involution that replaces occurrences of the patterns is then given by swapping $ab$ by $ba$ for every occurrence.
\end{proof}

\section{Cases based on the proof of Theorem~\ref{9-box-rs}}\label{cor-box-sec}

In this section, we explain equidistributions of pairs of patterns in Table~\ref{tab-6}.

\begin{table}[t]
 	{
 		\renewcommand{\arraystretch}{1.5}
 \begin{center} 
 		\begin{tabular}{|c|c||c|c||c|c||c|c|}
 			\hline
 			
 	\footnotesize{nr.}	& {\footnotesize patterns}  & \footnotesize{nr.}	& {\footnotesize patterns}  & \footnotesize{nr.}	& {\footnotesize patterns} & \footnotesize{nr.}	& {\footnotesize patterns}   \\ 
 			\hline
 			\hline	
\footnotesize{46}  & $\pattern{scale = 0.5}{3}{1/1,2/2,3/3}{1/1, 1/2,1/3,2/1,2/2,2/3,3/1,3/2,3/3}\pattern{scale = 0.5}{3}{1/1,2/3,3/2}{1/1, 1/2,1/3,2/1,2/2,2/3,3/1,3/2,3/3}$	  & 
\footnotesize{47}  & $\pattern{scale = 0.5}{3}{1/1,2/2,3/3}{0/0,1/1, 1/2,1/3,2/1,2/2,2/3,3/1,3/2,3/3}\pattern{scale = 0.5}{3}{1/1,2/3,3/2}{0/0,1/1, 1/2,1/3,2/1,2/2,2/3,3/1,3/2,3/3}$ 	& 
\footnotesize{48}  & $\pattern{scale = 0.5}{3}{1/1,2/2,3/3}{0/1,1/0,1/1, 1/2,1/3,2/1,2/2,2/3,3/1,3/2,3/3}\pattern{scale = 0.5}{3}{1/1,2/3,3/2}{0/1,1/0,1/1, 1/2,1/3,2/1,2/2,2/3,3/1,3/2,3/3}$ 	&
\footnotesize{49}  & $\pattern{scale = 0.5}{3}{1/1,2/2,3/3}{0/0,0/1,1/0,1/1, 1/2,1/3,2/1,2/2,2/3,3/1,3/2,3/3}\pattern{scale = 0.5}{3}{1/1,2/3,3/2}{0/0,0/1,1/0,1/1, 1/2,1/3,2/1,2/2,2/3,3/1,3/2,3/3}$ \\[5pt]
 			\hline	
\footnotesize{50}  & 	$\pattern{scale = 0.5}{3}{1/1,2/2,3/3}{0/2,1/1, 1/2,1/3,2/0,2/1,2/2,2/3,3/1,3/2,3/3}\pattern{scale = 0.5}{3}{1/1,2/3,3/2}{0/2,1/1, 1/2,1/3,2/0,2/1,2/2,2/3,3/1,3/2,3/3}$  & 
\footnotesize{51}  & 	$\pattern{scale = 0.5}{3}{1/1,2/2,3/3}{0/0,0/2,1/1, 1/2,1/3,2/0,2/1,2/2,2/3,3/1,3/2,3/3}\pattern{scale = 0.5}{3}{1/1,2/3,3/2}{0/0,0/2,1/1, 1/2,1/3,2/0,2/1,2/2,2/3,3/1,3/2,3/3}$  & 
\footnotesize{8}  &  $\pattern{scale = 0.5}{3}{1/1,2/2,3/3}{0/1,0/2,1/0,1/1, 1/2,1/3,2/0,2/1,2/2,2/3,3/1,3/2,3/3}\pattern{scale = 0.5}{3}{1/1,2/3,3/2}{0/1,0/2,1/0,1/1, 1/2,1/3,2/0,2/1,2/2,2/3,3/1,3/2,3/3}$  & 
\footnotesize{6}  & $\pattern{scale = 0.5}{3}{1/1,2/2,3/3}{0/0,0/1,0/2,1/0,1/1, 1/2,1/3,2/0,2/1,2/2,2/3,3/1,3/2,3/3}\pattern{scale = 0.5}{3}{1/1,2/3,3/2}{0/0,0/1,0/2,1/0,1/1, 1/2,1/3,2/0,2/1,2/2,2/3,3/1,3/2,3/3}$   \\[5pt]
 			\hline	
\footnotesize{52}  &  $\pattern{scale = 0.5}{3}{1/1,2/2,3/3}{2/2,2/3, 3/2,3/3}\pattern{scale = 0.5}{3}{1/1,2/3,3/2}{2/2,2/3, 3/2,3/3}$	 & 
\footnotesize{53}  & $\pattern{scale = 0.5}{3}{1/1,2/2,3/3}{0/0,2/2,2/3, 3/2,3/3}\pattern{scale = 0.5}{3}{1/1,2/3,3/2}{0/0,2/2,2/3, 3/2,3/3}$ 	& 
\footnotesize{54}  & $\pattern{scale = 0.5}{3}{1/1,2/2,3/3}{0/1,1/0,2/2,2/3, 3/2,3/3}\pattern{scale = 0.5}{3}{1/1,2/3,3/2}{0/1,1/0,2/2,2/3, 3/2,3/3}$ 	& 
\footnotesize{55}  & $\pattern{scale = 0.5}{3}{1/1,2/2,3/3}{0/0,0/1,1/0,2/2,2/3, 3/2,3/3}\pattern{scale = 0.5}{3}{1/1,2/3,3/2}{0/0,0/1,1/0,2/2,2/3, 3/2,3/3}$  \\[5pt]
 			\hline	
\footnotesize{56}  &  $\pattern{scale = 0.5}{3}{1/1,2/2,3/3}{1/1,2/2,2/3, 3/2,3/3}\pattern{scale = 0.5}{3}{1/1,2/3,3/2}{1/1,2/2,2/3, 3/2,3/3}$	 & 
\footnotesize{57}  & $\pattern{scale = 0.5}{3}{1/1,2/2,3/3}{0/0,1/1,2/2,2/3, 3/2,3/3}\pattern{scale = 0.5}{3}{1/1,2/3,3/2}{0/0,1/1,2/2,2/3, 3/2,3/3}$ 	& 
\footnotesize{58}  & $\pattern{scale = 0.5}{3}{1/1,2/2,3/3}{0/1,1/0,1/1,2/2,2/3, 3/2,3/3}\pattern{scale = 0.5}{3}{1/1,2/3,3/2}{0/1,1/0,1/1,2/2,2/3, 3/2,3/3}$ 	& 
\footnotesize{59}  &  $\pattern{scale = 0.5}{3}{1/1,2/2,3/3}{0/0,0/1,1/0,1/1,2/2,2/3, 3/2,3/3}\pattern{scale = 0.5}{3}{1/1,2/3,3/2}{0/0,0/1,1/0,1/1,2/2,2/3, 3/2,3/3}$  \\[5pt]
 			\hline	
\footnotesize{60}  &  $\pattern{scale = 0.5}{3}{1/1,2/2,3/3}{1/2,1/3,2/1,2/2,2/3,3/1,3/2,3/3}\pattern{scale = 0.5}{3}{1/1,2/3,3/2}{1/2,1/3,2/1,2/2,2/3,3/1,3/2,3/3}$	 & 
\footnotesize{61}  & $\pattern{scale = 0.5}{3}{1/1,2/2,3/3}{0/0,1/2,1/3,2/1,2/2,2/3,3/1,3/2,3/3}\pattern{scale = 0.5}{3}{1/1,2/3,3/2}{0/0,1/2,1/3,2/1,2/2,2/3,3/1,3/2,3/3}$ 	& 
\footnotesize{62}  & $\pattern{scale = 0.5}{3}{1/1,2/2,3/3}{0/1,1/0,1/2,1/3,2/1,2/2,2/3,3/1,3/2,3/3}\pattern{scale = 0.5}{3}{1/1,2/3,3/2}{0/1,1/0,1/2,1/3,2/1,2/2,2/3,3/1, 3/2,3/3}$ 	& 
\footnotesize{63}  &  $\pattern{scale = 0.5}{3}{1/1,2/2,3/3}{0/0,0/1,1/0,1/2,1/3,2/1,2/2,2/3,3/1, 3/2,3/3}\pattern{scale = 0.5}{3}{1/1,2/3,3/2}{0/0,0/1,1/0,1/2,1/3,2/1,2/2,2/3,3/1,3/2,3/3}$  \\[5pt]
 			\hline
\footnotesize{64}  &  $\pattern{scale = 0.5}{3}{1/1,2/2,3/3}{0/2,1/2,1/3,2/0,2/1,2/2,2/3,3/1 ,3/2,3/3}\pattern{scale = 0.5}{3}{1/1,2/3,3/2}{0/2,1/2,1/3,2/0,2/1,2/2,2/3,3/1,3/2,3/3}$	 & 
\footnotesize{65}  & $\pattern{scale = 0.5}{3}{1/1,2/2,3/3}{0/1,0/2,1/2,1/3,1/0,2/0,2/1,2/2,2/3,3/1,3/2,3/3}\pattern{scale = 0.5}{3}{1/1,2/3,3/2}{0/1,0/2,1/2,1/3,1/0,2/0,2/1,2/2,2/3,3/1,3/2,3/3}$ 	
& 
\footnotesize{66} & $\pattern{scale = 0.5}{3}{1/1,2/2,3/3}{0/0,0/2,1/2,1/3,2/0,2/1,2/2,2/3,3/1,3/2,3/3}\pattern{scale = 0.5}{3}{1/1,2/3,3/2}{0/0,0/2,1/2,1/3,2/0,2/1,2/2,2/3,3/1,3/2,3/3}$  & 
\footnotesize{67} & $\pattern{scale = 0.5}{3}{1/1,2/2,3/3}{0/0,0/1,0/2,1/2,1/3,1/0,2/0,2/1,2/2,2/3,3/1,3/2,3/3}\pattern{scale = 0.5}{3}{1/1,2/3,3/2}{0/0,0/1,0/2,1/2,1/3,1/0,2/0,2/1,2/2,2/3,3/1,3/2,3/3}$  \\[5pt]
 			\hline
\footnotesize{68}  &  $\pattern{scale = 0.5}{3}{1/1,2/2,3/3}{0/2,0/3,2/2,2/3, 2/0,3/0,3/2,3/3}\pattern{scale = 0.5}{3}{1/1,2/3,3/2}{0/2,0/3,2/2,2/3, 2/0,3/0,3/2,3/3}$	 & 
\footnotesize{69}  & $\pattern{scale = 0.5}{3}{1/1,2/2,3/3}{0/0,0/2,0/3,2/0,2/2,2/3, 3/0,3/2,3/3}\pattern{scale = 0.5}{3}{1/1,2/3,3/2}{0/0,0/2,0/3,2/0,2/2,2/3, 3/0,3/2,3/3}$	& 
\footnotesize{70} & $\pattern{scale = 0.5}{3}{1/1,2/2,3/3}{0/1,0/2,0/3,1/0,2/0,2/2,2/3, 3/0,3/2,3/3}\pattern{scale = 0.5}{3}{1/1,2/3,3/2}{0/1,0/2,0/3,1/0,2/0,2/2,2/3, 3/0,3/2,3/3}$ & 
\footnotesize{20} & $\pattern{scale = 0.5}{3}{1/1,2/2,3/3}{0/0,0/1,0/2,0/3,1/0,2/0,2/2,2/3, 3/0,3/2,3/3}\pattern{scale = 0.5}{3}{1/1,2/3,3/2}{0/0,0/1,0/2,0/3,1/0,2/0,2/2,2/3, 3/0,3/2,3/3}$ \\[5pt]
 			\hline
\footnotesize{71}  & $\pattern{scale = 0.5}{3}{1/1,2/2,3/3}{0/2,0/3,1/1,2/0,2/2,2/3, 3/0,3/2,3/3}\pattern{scale = 0.5}{3}{1/1,2/3,3/2}{0/2,0/3,1/1,2/0,2/2,2/3, 3/0,3/2,3/3}$	 & 
\footnotesize{72}  & $\pattern{scale = 0.5}{3}{1/1,2/2,3/3}{0/0,0/2,0/3,1/1,2/0,2/2,2/3, 3/0,3/2,3/3}\pattern{scale = 0.5}{3}{1/1,2/3,3/2}{0/0,0/2,0/3,1/1,2/0,2/2,2/3, 3/0,3/2,3/3}$	& 
\footnotesize{73} & $\pattern{scale = 0.5}{3}{1/1,2/2,3/3}{0/1,0/2,0/3,1/0,1/1,2/0,2/2,2/3, 3/0,3/2,3/3}\pattern{scale = 0.5}{3}{1/1,2/3,3/2}{0/1,0/2,0/3,1/0,1/1,2/0,2/2,2/3, 3/0,3/2,3/3}$ & 
\footnotesize{22} & $\pattern{scale = 0.5}{3}{1/1,2/2,3/3}{0/0,0/1,0/2,0/3,1/0,1/1,2/0,2/2,2/3, 3/0,3/2,3/3}\pattern{scale = 0.5}{3}{1/1,2/3,3/2}{0/0,0/1,0/2,0/3,1/0,1/1,2/0,2/2,2/3, 3/0,3/2,3/3}$  \\[5pt]
 			\hline
\footnotesize{74}  & $\pattern{scale = 0.5}{3}{1/1,2/2,3/3}{0/0,0/1,0/2,1/0,2/2,2/3, 2/0,3/2,3/3} \pattern{scale = 0.5}{3}{1/1,2/3,3/2}{0/0,0/1,0/2,1/0,2/2,2/3, 2/0,3/2,3/3}$ & 
\footnotesize{75}  &  $\pattern{scale = 0.5}{3}{1/1,2/2,3/3}{0/0,0/1,0/2,1/0,1/1,2/2,2/3, 2/0,3/2,3/3}\pattern{scale = 0.5}{3}{1/1,2/3,3/2}{0/0,0/1,0/2,1/0,1/1,2/2,2/3, 2/0,3/2,3/3}$ & 
& & 
&  \\[5pt]
 			\hline
	\end{tabular}
\end{center} 
}
\vspace{-0.5cm}
 	\caption{Equidistributions explainable based on the proof of Theorem~\ref{9-box-rs}. Pairs 6, 8, 20 and 22 have been added to the table for completeness. Pairs 30 (resp., 31) and 46 (resp., 47) have the same distributions by Theorem~\ref{an-equidistr-thm}.}\label{tab-6}
\end{table}

\begin{thm}\label{9-box-rs} We have $\pattern{scale=0.5}{3}{1/1,2/2,3/3}{1/1, 1/2,1/3,2/1,2/2,2/3,3/1,3/2,3/3} \sim_d \hspace{-0.15cm} \pattern{scale=0.5}{3}{1/1,2/3,3/2}{1/1, 1/2,1/3,2/1,2/2,2/3,3/1,3/2,3/3}$. \end{thm}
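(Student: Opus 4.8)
The plan is to follow the involutive template of Theorems~\ref{oth-1} and~\ref{thm-L-shape} and Corollary~\ref{two-pat-length-2}: first decode what the $3\times 3$ block of shaded boxes forces on an occurrence, and then build an involution on $S_n$ that swaps occurrences of the two patterns one region at a time, scanning the permutation from left to right. For both patterns the only unshaded boxes are those in column $0$ and in row $0$. Hence, if $\pi_i\pi_a\pi_b$ with $i<a<b$ is an occurrence of either pattern, then every entry of $\pi$ situated to the right of position $i$, other than $\pi_a$ and $\pi_b$, has value less than $\pi_i$, whereas there is no constraint on the entries to the left of position $i$. Consequently $\pi_a$ and $\pi_b$ are exactly the two largest entries of the suffix $\pi_i\pi_{i+1}\cdots\pi_n$, and $\pi_i$ is its third largest entry; conversely, any position $i$ whose suffix has its third largest entry in position $i$ produces a unique occurrence, which is of type $123$ if the larger of the two big entries lies to the right of the smaller one, and of type $132$ otherwise. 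Call such an $i$ \emph{active}. Then occurrences are in bijection with active positions, and \emph{toggling} the occurrence at an active position $i$ means swapping the values occupying the positions $a$ and $b$ of its two big entries: this keeps it an occurrence, flips its type, and changes no entry outside $\{a,b\}$.

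I would then record two structural facts. First, an active position is never the second or third entry of an occurrence, since being among the two largest entries of a suffix precludes having two larger entries still further to the right. Second, if $i<i'$ are both active then $\pi_{i'}<\pi_i$: otherwise $\pi_{i'}$ would be one of the two largest entries of the suffix at $i$, hence also of the shorter suffix at $i'$, contradicting that $i'$ is active. In the same spirit, for nested suffixes the positions of the two largest entries can only move weakly to the right as the suffix shrinks, so the pair $\{a,b\}$ attached to the active positions moves weakly rightward as one scans from left to right.

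With this in hand, the involution is: list the active positions $i_1<i_2<\cdots$ and toggle the occurrences in turn, always using the \emph{current} pair $\{a,b\}$ of the position being treated and recomputing before moving on. A toggle only rearranges, among themselves, entries that are smaller than $\pi_{i_1}$ and hence invisible to every entry below that value, so it leaves the set of active positions unchanged and flips the type of exactly the occurrences it should; by the two facts above, once $i_j$ has been treated no later toggle can interfere with it. After the full scan every occurrence of $123$ has become one of $132$ and vice versa, and running the scan again reverses everything, so the map is an involution interchanging the two statistics, which gives the claimed (joint) equidistribution. Small worked examples (such as $35142 \leftrightarrow 34125$), with permutation diagrams in the style of Figures~\ref{fig-1-1} and~\ref{fig-1-2}, should accompany the argument.

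The step I expect to be the main obstacle is the bookkeeping when several active positions produce occurrences sharing the \emph{same} pair $\{a,b\}$ --- for instance, every active position lying to the left of both $n$ and $n-1$ inherits the pair of positions occupied by $n$ and $n-1$, so a single value-swap flips a whole consecutive block of occurrences at once. The procedure must therefore really be phrased in terms of the \emph{distinct} pairs $\{a,b\}$, each swapped exactly once and processed in left-to-right order, and one must check that swapping an earlier pair never slides a not-yet-handled occurrence onto an already-handled pair. This is precisely where the monotonicities $\pi_{i_1}>\pi_{i_2}>\cdots$ and ``pairs move rightward'' are used: the active positions split into consecutive blocks, one per distinct pair, ordered consistently with the scan, so the process is well defined and reversible. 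Everything else closely parallels the proofs of Corollary~\ref{two-pat-length-2} and Theorem~\ref{oth-1} and is routine.
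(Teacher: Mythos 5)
Your structural analysis is right and, once you replace the per-active-position toggling by "toggle each \emph{distinct} pair of positions exactly once, in left-to-right order" (your own caveat paragraph), the resulting procedure is in substance the same map as the paper's: the paper groups overlapping occurrences into blocks and repeatedly swaps the current leftmost relevant entry with the largest entry to its right, which produces exactly the same sequence of transpositions as your pair-by-pair toggling. The genuine gap is the last step: you dispose of bijectivity with the sentence that ``running the scan again reverses everything, so the map is an involution.'' That claim is not obvious and is exactly where the paper has to work hardest. The pairs that the second scan toggles are computed in the image permutation and are in general \emph{different} position-pairs from the ones toggled on the way forward: for $\pi=5634127$ the forward scan toggles $\{2,7\},\{4,7\},\{6,7\}$ and yields $\pi'=5736142$, while the scan applied to $\pi'$ toggles $\{2,4\},\{4,6\},\{6,7\}$. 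These two compositions do turn out to be mutually inverse, but that is a fact requiring proof (the toggled pairs form different trees on the same support), not a formality; the paper does not even assert an involution, and instead proves injectivity by a two-case analysis of a hypothetical collision (and observes that the inverse is the same procedure run right-to-left). Without an argument of comparable substance your map is only shown to send each permutation to one with the statistics exchanged, not to do so bijectively.

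Two smaller points. First, your key justification sentence is garbled: a toggle rearranges entries that are \emph{larger} than the active values in question (they are the two largest entries of the relevant suffix), not ``smaller than $\pi_{i_1}$''; the usable statement is that both swapped values exceed every other entry of that suffix, so entries below cannot distinguish them. Second, even granting that, you still must check the interference pattern explicitly: a toggle of a pair $\{a,b\}$ flips precisely the occurrences whose pair equals $\{a,b\}$, leaves unchanged the type of an occurrence whose pair shares exactly one position with $\{a,b\}$ (because the value at the shared position remains the larger of that occurrence's two big entries), preserves the set of active positions, and never alters the position-pair of a not-yet-processed active position. These are the content of the paper's observations (ii) and (iii), and your monotonicity remarks gesture at them but do not establish them. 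So the plan is sound and essentially the paper's, but as written the proof is incomplete at its crucial step.
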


\begin{proof}

Let $\pi=\pi_1\ldots\pi_n \in S_n$, $p_1=\pattern{scale=0.5}{3}{1/1,2/2,3/3}{1/1, 1/2,1/3,2/1,2/2,2/3,3/1,3/2,3/3}$ and $p_2= \pattern{scale=0.5}{3}{1/1,2/3,3/2}{1/1, 1/2,1/3,2/1,2/2,2/3,3/1,3/2,3/3}$. Suppose $xab$ is an occurrence of $p_1$ or $p_2$ in $\pi$. We make the following observations.

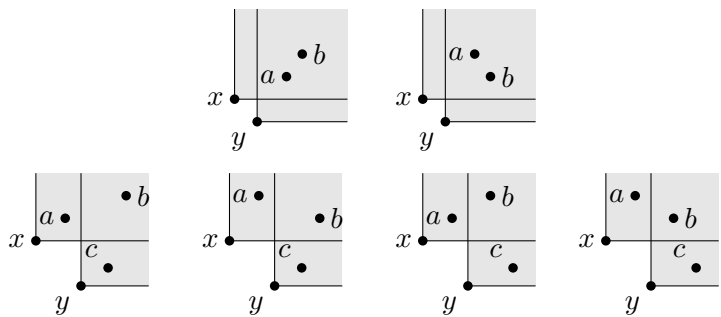
\begin{figure}
\begin{center}

\begin{tabular}{cccc}

&  
\begin{tikzpicture}[scale=0.3] 
\tikzset{      
   grid/.style={        
      draw,        
      step=1cm,        
      gray!100,    
      thin,        
    },   
    cell/.style={      
      draw,      
      anchor=center,    
      text centered,    
     },    
    graycell/.style={   
      fill=gray!20,
      draw=none,     
      minimum width=1cm,   
      minimum height=1cm,     
      anchor=south west,            
    }    
  } 
      
  \fill[graycell] (1,0) rectangle (5,5);
  \fill[graycell] (0,1) rectangle (5,5);

\draw (0,5)--(0,1)--(5,1);
\draw (1,5)--(1,0)--(5,0);
\coordinate[label=left:{\small $x$}] (x) at (0,1);
\coordinate[label=left:{\small $a$}] (a) at (2.3,2);
\coordinate[label=right:{\small $b$}] (b) at (3,3);
\coordinate[label=below left:{\small $y$}] (y) at (1,0);
\fill[] (x)circle(6pt);
\fill[] (a)circle(6pt);
\fill[] (b)circle(6pt);
\fill[] (y)circle(6pt);

\end{tikzpicture} 

&
\vspace{1cm}
\begin{tikzpicture}[scale=0.3] 
\tikzset{      
   grid/.style={        
      draw,        
      step=1cm,        
      gray!100,    
      thin,        
    },   
    cell/.style={      
      draw,      
      anchor=center,    
      text centered,    
     },    
    graycell/.style={   
      fill=gray!20,
      draw=none,     
      minimum width=1cm,   
      minimum height=1cm,     
      anchor=south west,            
    }    
  } 
      
  \fill[graycell] (1,0) rectangle (5,5);
  \fill[graycell] (0,1) rectangle (5,5);
\draw (0,5)--(0,1)--(5,1);
\draw (1,5)--(1,0)--(5,0);
\coordinate[label=left:{\small $x$}] (x) at (0,1);
\coordinate[label=left:{\small $a$}] (a) at (2.3,3);
\coordinate[label=right:{\small $b$}] (b) at (3,2);
\coordinate[label=below left:{\small $y$}] (y) at (1,0);

\fill[] (x)circle(6pt);
\fill[] (a)circle(6pt);
\fill[] (b)circle(6pt);
\fill[] (y)circle(6pt);

\end{tikzpicture}

& \\[-1cm]

\begin{tikzpicture}[scale=0.3] 
\tikzset{      
   grid/.style={        
      draw,        
      step=1cm,        
      gray!100,    
      thin,        
    },   
    cell/.style={      
      draw,      
      anchor=center,    
      text centered,    
     },    
    graycell/.style={   
      fill=gray!20,
      draw=none,     
      minimum width=1cm,   
      minimum height=1cm,     
      anchor=south west,            
    }    
  } 
      
  \fill[graycell] (2,0) rectangle (5,5);
  \fill[graycell] (0,2) rectangle (5,5);
\draw (0,5)--(0,2)--(5,2);
\draw (2,5)--(2,0)--(5,0);
\coordinate[label=left:{\small $x$}] (x) at (0,2);
\coordinate[label=left:{\small $a$}] (a) at (1.3,3);
\coordinate[label=right:{\small $b$}] (b) at (4,4);
\coordinate[label=below left:{\small $y$}] (y) at (2,0);
\coordinate[label=above left:{\small $c$}] (c) at (3.2,0.8);
\fill[] (x)circle(6pt);
\fill[] (a)circle(6pt);
\fill[] (b)circle(6pt);
\fill[] (y)circle(6pt);
\fill[] (c)circle(6pt);
\end{tikzpicture} 
& 
\begin{tikzpicture}[scale=0.3] 
\tikzset{      
   grid/.style={        
      draw,        
      step=1cm,        
      gray!100,    
      thin,        
    },   
    cell/.style={      
      draw,      
      anchor=center,    
      text centered,    
     },    
    graycell/.style={   
      fill=gray!20,
      draw=none,     
      minimum width=1cm,   
      minimum height=1cm,     
      anchor=south west,            
    }    
  } 
      
  \fill[graycell] (2,0) rectangle (5,5);
  \fill[graycell] (0,2) rectangle (5,5);
\draw (0,5)--(0,2)--(5,2);
\draw (2,5)--(2,0)--(5,0);
\coordinate[label=left:{\small $x$}] (x) at (0,2);
\coordinate[label=left:{\small $a$}] (a) at (1.3,4);
\coordinate[label=right:{\small $b$}] (b) at (4,3);
\coordinate[label=below left:{\small $y$}] (y) at (2,0);
\coordinate[label=above left:{\small $c$}] (c) at (3.2,0.8);
\fill[] (x)circle(6pt);
\fill[] (a)circle(6pt);
\fill[] (b)circle(6pt);
\fill[] (y)circle(6pt);
\fill[] (c)circle(6pt);
\end{tikzpicture}

& 
\begin{tikzpicture}[scale=0.3] 
\tikzset{      
   grid/.style={        
      draw,        
      step=1cm,        
      gray!100,    
      thin,        
    },   
    cell/.style={      
      draw,      
      anchor=center,    
      text centered,    
     },    
    graycell/.style={   
      fill=gray!20,
      draw=none,     
      minimum width=1cm,   
      minimum height=1cm,     
      anchor=south west,            
    }    
  } 
      
  \fill[graycell] (2,0) rectangle (5,5);
  \fill[graycell] (0,2) rectangle (5,5);
\draw (0,5)--(0,2)--(5,2);
\draw (2,5)--(2,0)--(5,0);
\coordinate[label=left:{\small $x$}] (x) at (0,2);
\coordinate[label=left:{\small $a$}] (a) at (1.3,3);
\coordinate[label=right:{\small $b$}] (b) at (3,4);
\coordinate[label=below left:{\small $y$}] (y) at (2,0);
\coordinate[label=above left:{\small $c$}] (c) at (4,0.8);
\fill[] (x)circle(6pt);
\fill[] (a)circle(6pt);
\fill[] (b)circle(6pt);
\fill[] (y)circle(6pt);
\fill[] (c)circle(6pt);
\end{tikzpicture}

& 
\begin{tikzpicture}[scale=0.3] 
\tikzset{      
   grid/.style={        
      draw,        
      step=1cm,        
      gray!100,    
      thin,        
    },   
    cell/.style={      
      draw,      
      anchor=center,    
      text centered,    
     },    
    graycell/.style={   
      fill=gray!20,
      draw=none,     
      minimum width=1cm,   
      minimum height=1cm,     
      anchor=south west,            
    }    
  } 
      
  \fill[graycell] (2,0) rectangle (5,5);
  \fill[graycell] (0,2) rectangle (5,5);
\draw (0,5)--(0,2)--(5,2);
\draw (2,5)--(2,0)--(5,0);
\coordinate[label=left:{\small $x$}] (x) at (0,2);
\coordinate[label=left:{\small $a$}] (a) at (1.3,4);
\coordinate[label=right:{\small $b$}] (b) at (3,3);
\coordinate[label=below left:{\small $y$}] (y) at (2,0);
\coordinate[label=above left:{\small $c$}] (c) at (4,0.8);
\fill[] (x)circle(6pt);
\fill[] (a)circle(6pt);
\fill[] (b)circle(6pt);
\fill[] (y)circle(6pt);
\fill[] (c)circle(6pt);
\end{tikzpicture}

\end{tabular}
\end{center}

\vspace{-0.5cm}

\caption{Observations (ii) and (iii) in the proof of Theorem~\ref{9-box-rs}}\label{fig-4}
\end{figure}

\begin{itemize}
\item[(i)] No other occurrence of $p_1$ or $p_2$ can begin with $x$ (otherwise, $xab$ cannot be an occurrence of $p_1$ or $p_2$ because of some elements in $\pi$ in a shaded area). 

\item[(ii)] If $x\neq y$ and $yac$ is another occurrence of $p_1$ or $p_2$ then $b=c$ (which is illustrated schematically on the first line in Figure~\ref{fig-4}). Indeed, suppose $b\neq c$ and $y$ is to the right of $x$ (the case when $y$ is to the left of $x$ can be considered similarly). If $y>x$ then $xab$ cannot be an occurrence of $p_1$ or $p_2$ since in this case, the element $y$ is in a shaded area. Hence $y<x$, but then $yac$ is not an occurrence of $p_1$ or $p_2$ because the element $b$ is in a shaded area. This contradiction shows that $b=c$.

\item[(iii)] If $x> y$, $a\neq c$, and $ycb$ (resp., $ybc$) is an occurrence of $p_1$ or $p_2$ then the only two possibilities are the leftmost (resp., rightmost) two drawings on the second line in Figure~\ref{fig-4}. Indeed, element $a$ must be between elements   $x$ and $y$ or else $ycb$ is not an occurrence because of $a$. Also, we must have $x>c$ or else $xab$ is not an occurrence because of $c$. 
\end{itemize}

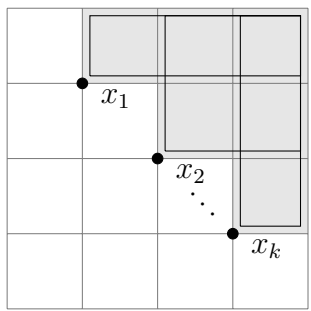
\begin{figure}  
\begin{center}  

\begin{tikzpicture} 
\tikzset{      
   grid/.style={        
      draw,        
      step=1cm,        
      gray!100,    
      thin,        
    },   
    cell/.style={      
      draw,      
      anchor=center,    
      text centered,    
     },    
    graycell/.style={   
      fill=gray!20,
      draw=none,     
      minimum width=1cm,   
      minimum height=1cm,     
      anchor=south west,            
    }    
  } 
      
  \fill[graycell] (1,3) rectangle (4,4);
  \fill[graycell] (2,2) rectangle (4,3);
  \fill[graycell] (3,1) rectangle (4,2);

 \draw[grid] (0,0) grid (4,4);

  \node[anchor=west] at (1.1,2.8) {$x_1$}; 
  \node[anchor=west] at (2.1,1.8) {$x_2$}; 
  \node[anchor=west] at (3.1,0.8) {$x_k$}; 
 
  \filldraw[black] (1,3) circle (2pt);  
  \filldraw[black] (2,2) circle (2pt); 
  \filldraw[black] (3,1) circle (2pt);  
 \draw (1.1,3.1)--(1.1,3.9)--(3.9,3.9)--(3.9,3.1)--(1.1,3.1);
 \draw (2.1,2.1)--(2.1,3.9)--(3.9,3.9)--(3.9,2.1)--(2.1,2.1);
\draw (3.1,1.1)--(3.1,3.9)--(3.9,3.9)--(3.9,1.1)--(3.1,1.1);

 \node[anchor=center, rotate=-45] at (2.6,1.4) {$\ldots$};
 
\end{tikzpicture} 

\vspace{-0.5cm}

\end{center}
\caption{The structure of permutations in the proof of Theorem~\ref{9-box-rs}}\label{fig-4-1}
\end{figure}

Assume that $\pi$ has $x$ occurrences of $p_1$ and $y$ occurrences of $p_2$.  Consider all occurrences of $p_1$ and $p_2$ in $\pi$ and let $x_1>x_2>\cdots>x_k$ be the first elements in these occurrences (note that $x+y=k$).  By observations (i)--(iii), to the right of each $x_i$ there are exactly two elements in $\pi$ larger than $x_i$. We subdivide $x_i$'s into blocks as follows: $x_i$ and $x_{i+1}$ belong to the same block if occurrences of $p_1$ or $p_2$ starting at $x_i$ and $x_{i+1}$ share at least one element. Let $X_i=\{x_{m_{i-1}+1},x_{m_{i-1}+2},\ldots,x_{m_{i}}\}$ be the $i$-th block, where $1\leq i\leq t$, $m_0=0$, $t\geq 1$, and $m_1<m_2<\cdots<m_t$. For example, for the permutation $\pi$ in Figure~\ref{fig-5-ex}, $t=2$, $m_1=6$, $m_2=2$, $X_1= \{6,9,11,13\}$, and $X_2=\{2\}$. 

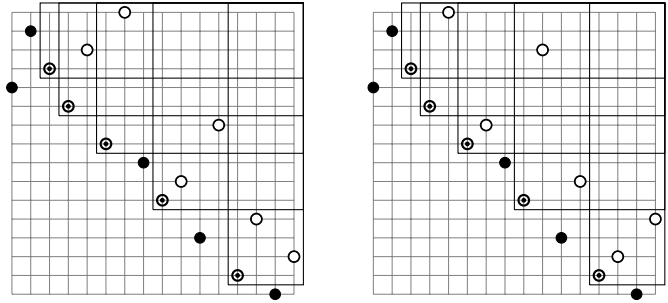
\begin{figure}
\begin{center}

\begin{tabular}{ccc}

\begin{tikzpicture}[scale=0.25] 
\tikzset{    
   grid/.style={      
      draw,      
      step=1cm,      
      gray!100,     
      very thin,      
    }, 
    cell/.style={    
      draw,    
      anchor=center,  
      text centered,    
     },  
    graycell/.style={ 
      fill=gray!40,   
      draw=none,   
      minimum width=1cm, 
      minimum height=1cm,   
      anchor=south west,   
    }
  }  
    
  \draw[grid] (0,0) grid (15,15);  
%
\filldraw[black] (0,11) circle (8pt);
  \filldraw[black] (1,14) circle (8pt); 
  \draw[thick] (2,12) circle (8pt); \draw[thick] (2,12) circle (2pt); 
  \draw[thick] (3,10) circle (8pt);  \draw[thick] (3,10) circle (2pt); 
  \draw[thick] (4,13) circle (8pt);  \filldraw[white] (4,13) circle (6pt);  
  \draw[thick] (5,8) circle (8pt); \draw[thick] (5,8) circle (2pt); 
  
  \filldraw[black] (7,7) circle (8pt);  
  \draw[thick] (8,5) circle (8pt); \draw[thick] (8,5) circle (2pt); 
  \draw[thick] (9,6) circle (8pt); \filldraw[white] (9,6) circle (6pt); 
  \filldraw[black] (10,3) circle (8pt);  
  \draw[thick] (11,9) circle (8pt);  \filldraw[white] (11,9) circle (6pt); 
  \draw[thick] (12,1) circle (8pt); \draw[thick] (12,1) circle (2pt); 
  \draw[thick] (13,4) circle (8pt); \filldraw[white] (13,4) circle (6pt); 
  \filldraw[black] (14,0) circle (8pt);

   \draw (1.5,11.5)--(1.5,15.5)--(15.5,15.5)--(15.5,11.5)--(1.5,11.5);  
   \draw (2.5,9.5)--(2.5,15.5)--(15.5,15.5)--(15.5,9.5)--(2.5,9.5); 
   \draw (4.5,7.5)--(4.5,15.5)--(15.5,15.5)--(15.5,7.5)--(4.5,7.5);
   \draw (7.5,4.5)--(7.5,15.5)--(15.5,15.5)--(15.5,4.5)--(7.5,4.5);
   \draw (11.5,0.5)--(11.5,15.5)--(15.5,15.5)--(15.5,0.5)--(11.5,0.5);
   \draw[thick] (6,15) circle (8pt); \filldraw[white] (6,15) circle (6pt); 
  \draw[thick] (15,2) circle (8pt); \filldraw[white] (15,2) circle (6pt); 
\end{tikzpicture} 

& \ & 
 \begin{tikzpicture}[scale=0.25] 
\tikzset{    
   grid/.style={      
      draw,      
      step=1cm,      
      gray!100,     
      very thin,      
    }, 
    cell/.style={    
      draw,    
      anchor=center,  
      text centered,    
     },  
    graycell/.style={ 
      fill=gray!40,   
      draw=none,   
      minimum width=1cm, 
      minimum height=1cm,   
      anchor=south west,   
    }
  }  
    
  \draw[grid] (0,0) grid (15,15);  
\filldraw[black] (0,11) circle (8pt);
  \filldraw[black] (1,14) circle (8pt); 
  \draw[thick] (2,12) circle (8pt); \draw[thick] (2,12) circle (2pt); 
  \draw[thick] (3,10) circle (8pt); \draw[thick] (3,10) circle (2pt); 
  
  \draw[thick] (5,8) circle (8pt); \draw[thick] (5,8) circle (2pt); 
  \draw[thick] (6,9) circle (8pt); \filldraw[white] (6,9) circle (6pt); 
  \filldraw[black] (7,7) circle (8pt);  
  \draw[thick] (8,5) circle (8pt); \draw[thick] (8,5) circle (2pt); 
  \draw[thick] (9,13) circle (8pt); \filldraw[white] (9,13) circle (6pt); 
  \filldraw[black] (10,3) circle (8pt);  
  \draw[thick] (11,6) circle (8pt);  \filldraw[white] (11,6) circle (6pt); 
  \draw[thick] (12,1) circle (8pt); \draw[thick] (12,1) circle (2pt); 
  \draw[thick] (13,2) circle (8pt); \filldraw[white] (13,2) circle (6pt); 
  \filldraw[black] (14,0) circle (8pt);

    \draw (1.5,11.5)--(1.5,15.5)--(15.5,15.5)--(15.5,11.5)--(1.5,11.5);  
   \draw (2.5,9.5)--(2.5,15.5)--(15.5,15.5)--(15.5,9.5)--(2.5,9.5); 
   \draw (4.5,7.5)--(4.5,15.5)--(15.5,15.5)--(15.5,7.5)--(4.5,7.5);
   \draw (7.5,4.5)--(7.5,15.5)--(15.5,15.5)--(15.5,4.5)--(7.5,4.5);
   \draw (11.5,0.5)--(11.5,15.5)--(15.5,15.5)--(15.5,0.5)--(11.5,0.5);
   \draw[thick] (4,15) circle (8pt);  \filldraw[white] (4,15) circle (6pt);  
  \draw[thick] (15,4) circle (8pt); \filldraw[white] (15,4) circle (6pt);

\end{tikzpicture}
\end{tabular}
\end{center}

\vspace{-0.5cm}

\caption{Permutations $\pi= (12)(15)(13)(11)(14)9(16)8674(10)2513$  (to the left) and $\pi'=(12)(15)(13)(11)(16)9(10)86(14)4(7)2315$ (to the right) illustrating the proof of Theorem~\ref{9-box-rs}. The circled dots represent the first elements of occurrences of $p_1$ or $p_2$ (the elements in $X_1$ and $X_2$), and the white dots represent $\pi_{i_1}\ldots\pi_{i_s}$ for $X_1$ and $X_2$.}\label{fig-5-ex}
\end{figure}

We will next describe a procedure of permuting the second and third elements, forming a subsequence $\pi_{i_1}\ldots\pi_{i_s}$ of $\pi$, in occurrences of $p_1$ and $p_2$ that begin with the elements in block $X_i$, $1\leq i\leq t$ (the subsequence $\pi_{i_1}\ldots\pi_{i_s}$ of $\pi$ is between $x_{m_{i-1}+1}$ and $x_{m_{i}+1}$ in $\pi$; $x_t+1$ is defined as the right end of $\pi$). All of the $\pi_i$'s are in the shaded area in the schematic representation of $\pi$ in Figure~\ref{fig-4-1}. The resulting permutation will be placed in $\pi$ in positions $i_1,\ldots,i_s$. We say that the procedure has been applied to block $X_i$. For example, for the permutation $\pi$ in Figure~\ref{fig-5-ex}, $\pi_{i_1}\ldots\pi_{i_s}$  for $X_1$ is $\pi_5\pi_7\pi_{10}\pi_{12}=(14)(16)7(10)$, and  for $X_2$, it is $\pi_{14}\pi_{16}=53$.  Applying the procedure to each block will result, in a bijective manner,  in a permutation $\pi'$ that has $y$ occurrences of $p_1$ and $x$ occurrences of $p_2$. 

Let $\pi^{(1)}_{i_1}\ldots\pi^{(1)}_{i_s}=\pi_{i_1}\ldots\pi_{i_s}$. First, suppose $\pi^{(1)}_{i_a}$ is the largest element among $\pi^{(1)}_{i_2}\ldots\pi^{(1)}_{i_s}$. Note that $x_i\pi^{(1)}_{i_1}\pi^{(1)}_{i_a}$ is an occurrence of $p_1$ or $p_2$ for $m_{i-1}+1\leq i\leq j_1\leq m_i$ for some $j_1$. Let $\pi^{(2)}_{i_1}\ldots\pi^{(2)}_{i_s}$ be the permutation obtained from $\pi^{(1)}_{i_1}\ldots\pi^{(1)}_{i_s}$ by swapping $\pi^{(1)}_{i_1}$ and $\pi^{(1)}_{i_a}$. Note that this move swaps occurrences of $p_1$ and $p_2$ in $\pi$  starting at $x_i$ for ${m}_{i-1}+1\leq i\leq j_1\leq m_i$ and does not affect any other occurrences of $p_1$ and $p_2$ (i.e., no new occurrences are introduced, and no other occurrences are changed). We let $\pi^{(2)}$ be the permutation obtained from $\pi$ by the swap. 

Next, suppose $\pi^{(2)}_{i_b}$ is the largest element among $\pi^{(2)}_{i_3}\ldots\pi^{(2)}_{i_s}$. Note that $x_i\pi^{(2)}_{i_2}\pi^{(2)}_{i_b}$ is an occurrence of $p_1$ or $p_2$ for $j_1+1\leq i\leq j_2\leq m_i$ for some $j_2$. Let $\pi^{(3)}_{i_1}\ldots\pi^{(3)}_{i_s}$ be the permutation obtained from $\pi^{(2)}_{i_1}\ldots\pi^{(2)}_{i_s}$ by swapping $\pi^{(2)}_{i_2}$ and $\pi^{(2)}_{i_b}$. Note that this move swaps occurrences of $p_1$ and $p_2$ in $\pi^{(2)}$  starting at $x_i$ for $j_1+1\leq i\leq j_2$ and does not affect any other occurrences of $p_1$ and $p_2$. We let $\pi^{(3)}$ be the permutation obtained from $\pi^{(2)}$ by the swap. 

And so on. Continuing in this way,  eventually we arrive to swapping the elements $\pi^{(s-1)}_{i_{s-1}}$ and $\pi^{(s-1)}_{i_{s}}$ and obtaining $\pi^{(s)}_{i_1}\ldots\pi^{(s)}_{i_s}$, which completes swapping  all occurrences of $p_1$ and $p_2$ in $\pi$ starting at $X_i$. Recall that $\pi'$ is obtained by applying the procedure to each block $X_i$, $1\leq i\leq t$.    

For example, for the permutation $\pi$ in Figure~\ref{fig-5-ex}, for $X_2$ we have a single swap of  the elements 3 and 5, while for $X_1$,

$\pi^{(1)}_{i_1}\pi^{(1)}_{i_2}\pi^{(1)}_{i_3}\pi^{(1)}_{i_4}=\pi^{(1)}_5\pi^{(1)}_7\pi^{(1)}_{10}\pi^{(1)}_{12}=(14)(16)7(10)$ $\xrightarrow{\text{swap 14 \& 16}}$

$\pi^{(2)}_{i_1}\pi^{(2)}_{i_2}\pi^{(2)}_{i_3}\pi^{(2)}_{i_4}=\pi^{(2)}_5\pi^{(2)}_7\pi^{(2)}_{10}\pi^{(2)}_{12}=(16)(14)7(10)$ $\xrightarrow{\text{swap 10 \& 14}}$

$\pi^{(3)}_{i_1}\pi^{(3)}_{i_2}\pi^{(3)}_{i_3}\pi^{(3)}_{i_4}=\pi^{(3)}_5\pi^{(3)}_7\pi^{(3)}_{10}\pi^{(3)}_{12}=(16)(10)7(14)$ $\xrightarrow{\text{swap 7 \& 14}}$

$\pi^{(4)}_{i_1}\pi^{(4)}_{i_2}\pi^{(4)}_{i_3}\pi^{(4)}_{i_4}=\pi^{(4)}_5\pi^{(4)}_7\pi^{(4)}_{10}\pi^{(4)}_{12}=(16)(10)(14)7$.

Note that the occurrence (13)(14)(16), (11)(14)(16) and 67(10) of  $p_1$ in $\pi$ are replaced, respectively, by the occurrence (13)(16)(14), (11)(16)(14), 6(14)(7) of $p_2$ in $\pi'$, while the occurrences 9(16)(10) and  253 of  $p_2$ in $\pi$ are replaced, respectively, by the occurrences 9(10)(14), 235 of $p_1$ in $\pi'$; no new occurrences of the patterns are introduced in~$\pi'$. 

\begin{remark}\label{moving-red-remark} Note that no element in $\pi_{i_1}\ldots\pi_{i_s}$ in $\pi$ will be in the same position in $\pi'$. Indeed, at each step in our procedure, an element $\pi_{i_j}$ either moves to the left and then moves nowhere else, or it moves to the right, and if it moves again, it moves to a position $>i_j$.  \end{remark}

It remains to explain why the procedure described above is a bijection. Clearly, the steps are reversible (instead of conducting swaps in $\pi_{i_1}\ldots\pi_{i_s}$ from left to right, we do it from right to left). Now, for injectivity, suppose $\pi=\pi_1\ldots\pi_n$ and $\sigma=\sigma_1\ldots\sigma_n$ are two different permutations, so that $\pi_i\neq\sigma_i$ for some $i$, and we assume $i$ is the smallest such index,  but $\pi'=\sigma'$, so that $\pi'_i=\sigma'_i$. The procedure only changes the places of elements in $\pi_{i_1}\ldots\pi_{i_s}$ and $\sigma_{k_1}\ldots\sigma_{k_{\ell}}$.  Hence, if $\pi_i$ is not in $\pi_{i_1}\ldots\pi_{i_s}$ or $\sigma_i$ is not in $\sigma_{k_1}\ldots\sigma_{k_{\ell}}$ then $\pi'_i=\sigma'_i\in\{\pi_i,\sigma_i\}$. It is impossible because either $\pi_i$ and $\sigma_i$ are not changed in their original position, or one of them would be involved as a second or third element in an occurrence of $p_1$ or $p_2$ in $\pi'=\sigma'$, while the other one would not. 

Therefore, the remaining case to consider is when $\pi_i$ is in $\pi_{i_1}\ldots\pi_{i_s}$ and $\sigma_i$ is in $\sigma_{k_1}\ldots\sigma_{k_{\ell}}$. Because $\pi'=\sigma'$, $\{i_1,\ldots,i_s\}=\{k_1,\ldots,k_{\ell}\}$, in particular, $s=\ell$, and  $\pi_{i_1}\ldots\pi_{i_s}$ and  $\sigma_{i_1}\ldots\sigma_{i_s}$ correspond to the same block $X_i$. Let $x=\pi_i$, $y=\sigma_i$, $z=\pi'_i=\sigma'_i$, and assume, w.l.o.g., that $y>x$. By Remark~\ref{moving-red-remark}, $z\not\in\{x,y\}$. Because we assumed that $\pi_j=\sigma_j$ for $j<i$, we have that $y$  is to the right of $x$ in $\pi$, and $x$  is to the right of $y$ in $\sigma$. One can see that $z$ must be in position $<i$ because otherwise it cannot be in position $i$ in $\pi'=\sigma'$. Clearly, $z>x$, and we consider two subcases. 
\begin{itemize}
\item Suppose $x<z<y$. When implementing the process in $\sigma$, $z$ and $y$ will be swapped, but $z$ will then move to the right again because $x$ is still to the right of $z$ (and maybe some other elements in $\sigma_{i_1}\ldots\sigma_{i_s}$). This contradicts to $z=\sigma'_i$.
\item Suppose $z>y$. Then $z=\pi_{i_1}=\sigma_{i_1}$ since otherwise $z$ must be swapped with an element to its left. And because $y>x$ in $\pi$, $xy$ must be the second and third elements in an occurrence of $p_1$ in $\pi$, which implies that $yx$ must be the second and third elements in an occurrence of  $p_2$ in $\sigma$  because all elements to the left of $y$ are equal. Therefore, $y$ must be the largest element in $\sigma_i\ldots \sigma_{i_s}$ because of the shaded area in the definition of an occurrence of the mesh pattern $p_2$. Consequently, because $\sigma_i=z$, $z$ will then move to the right again because $x$ is still to the right of $z$ (and maybe some other elements in $\sigma_{i_1}\ldots\sigma_{i_s}$). This contradicts to $z=\sigma'_i$.
\end{itemize}
We showed that the procedure is bijective, which completes our proof of the theorem.
\end{proof}

The following theorem is also true for any shading of the boxes (0,0), (0,1), (0,2), (1,0), and (2,0), whether symmetric or not, provided that the boxes (3,0) and (0,3) are unshaded and the other boxes are shaded. Note that this theorem offers alternative proofs for pairs 6 and 8. If the boxes (3,0) and (0,3) are shaded, along with the boxed shaded in Theorem~\ref{9-box-rs}, then only half of the patterns with additional symmetric shading of boxes are equidistributed, and they are pairs 1, 3, 11, and 45.

\begin{thm}\label{thm-square-shape-1}
We have $\pattern{scale = 0.5}{3}{1/1,2/2,3/3}{0/0,1/1, 1/2,1/3,2/1,2/2,2/3,3/1,3/2,3/3}\sim_d\hspace{-0.15cm} \pattern{scale = 0.5}{3}{1/1,2/3,3/2}{0/0,1/1, 1/2,1/3,2/1,2/2,2/3,3/1,3/2,3/3}$, 
$\pattern{scale = 0.5}{3}{1/1,2/2,3/3}{0/1,1/0,1/1, 1/2,1/3,2/1,2/2,2/3,3/1,3/2,3/3}\sim_d\hspace{-0.15cm} \pattern{scale = 0.5}{3}{1/1,2/3,3/2}{0/1,1/0,1/1, 1/2,1/3,2/1,2/2,2/3,3/1,3/2,3/3}$, 
$\pattern{scale = 0.5}{3}{1/1,2/2,3/3}{0/0,0/1,1/0,1/1, 1/2,1/3,2/1,2/2,2/3,3/1,3/2,3/3}\sim_d\hspace{-0.15cm} \pattern{scale = 0.5}{3}{1/1,2/3,3/2}{0/0,0/1,1/0,1/1, 1/2,1/3,2/1,2/2,2/3,3/1,3/2,3/3}$, \\
$\pattern{scale = 0.5}{3}{1/1,2/2,3/3}{0/2,1/1, 1/2,1/3,2/0,2/1,2/2,2/3,3/1,3/2,3/3}\sim_d\hspace{-0.15cm} \pattern{scale = 0.5}{3}{1/1,2/3,3/2}{0/2,1/1, 1/2,1/3,2/0,2/1,2/2,2/3,3/1,3/2,3/3}$, 
$\pattern{scale = 0.5}{3}{1/1,2/2,3/3}{0/0,0/2,1/1, 1/2,1/3,2/0,2/1,2/2,2/3,3/1,3/2,3/3}\sim_d\hspace{-0.15cm} \pattern{scale = 0.5}{3}{1/1,2/3,3/2}{0/0,0/2,1/1, 1/2,1/3,2/0,2/1,2/2,2/3,3/1,3/2,3/3}$,
$\pattern{scale = 0.5}{3}{1/1,2/2,3/3}{0/1,0/2,1/0,1/1, 1/2,1/3,2/0,2/1,2/2,2/3,3/1,3/2,3/3}\sim_d\hspace{-0.15cm} \pattern{scale = 0.5}{3}{1/1,2/3,3/2}{0/1,0/2,1/0,1/1, 1/2,1/3,2/0,2/1,2/2,2/3,3/1,3/2,3/3}$, and $\pattern{scale = 0.5}{3}{1/1,2/2,3/3}{0/0,0/1,0/2,1/0,1/1, 1/2,1/3,2/0,2/1,2/2,2/3,3/1,3/2,3/3}\sim_d\hspace{-0.15cm} \pattern{scale = 0.5}{3}{1/1,2/3,3/2}{0/0,0/1,0/2,1/0,1/1, 1/2,1/3,2/0,2/1,2/2,2/3,3/1,3/2,3/3}$.
\end{thm}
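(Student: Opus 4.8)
The plan is to run the proof of Theorem~\ref{9-box-rs} essentially verbatim for each of the seven pairs. For a fixed $\pi=\pi_1\cdots\pi_n\in S_n$, I would let $x_1>x_2>\cdots>x_k$ be the first elements of the occurrences of $p_1$ or $p_2$ in $\pi$, group them into blocks $X_1,\ldots,X_t$ exactly as in that proof, and apply, block by block and from left to right, the same sequence of swaps to the subsequence $\pi_{i_1}\cdots\pi_{i_s}$ of second-and-third elements of occurrences attached to a block; the resulting $\pi'$ should carry $y$ occurrences of $p_1$ and $x$ occurrences of $p_2$ whenever $\pi$ carried $x$ of $p_1$ and $y$ of $p_2$. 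The point is that the seven patterns differ from the base pattern of Theorem~\ref{9-box-rs} only by shading some of the five boxes $(0,0),(0,1),(0,2),(1,0),(2,0)$, all of which sit South-West of the ``nine-box square'' $(i,j)$, $1\le i,j\le 3$.

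The first step is to check that Observations (i)--(iii) in the proof of Theorem~\ref{9-box-rs} persist: each is an implication of the form ``if certain triples are occurrences, then $\ldots$'', and shading extra boxes only shrinks the set of occurrences, so these implications stay true. In particular, to the right of every first element $x_j$ there are still exactly two entries exceeding $x_j$, and the block structure of Figure~\ref{fig-4-1} keeps its shape. The second step is to note that the condition ``boxes $(0,0),(0,1),(0,2),(1,0),(2,0)$ are empty'' for an occurrence $xab$ is a condition purely on the entries of $\pi$ lying to the left of $x$ or below $x$ in value, i.e.\ on the South-West data of the occurrence; since the swap procedure only permutes the entries occupying the fixed positions $i_1,\ldots,i_s$ and leaves every other entry in place, the control on the nine-box square established in the proof of Theorem~\ref{9-box-rs}, together with this observation, shows that the swaps exchange exactly the intended occurrences and create or destroy none. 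The third step is the injectivity argument: Remark~\ref{moving-red-remark} and the two-subcase analysis following it use only the relative order of entries inside a block, so they transfer without change.

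The main obstacle — the one place where the base proof does not transcribe completely mechanically — is to confirm that a newly-shaded South-West box cannot cause a swap to turn an occurrence of $p_1$ or $p_2$ into a non-occurrence (or conversely) through an entry that happens to sit South-West of one first element of a block while being North-East of the top element of that same block. This is exactly handled by the hypothesis that $(0,3)$ and $(3,0)$ stay unshaded: the South-West boxes actually in play then read only entries that the procedure never touches. Were $(0,3)$ and $(3,0)$ shaded instead, a swapped entry could fall into one of them and the argument would collapse; this is the phenomenon flagged in the discussion preceding the theorem, parallel to Remark~\ref{subtle-rem}, and it is why only pairs 1, 3, 11 and 45 remain equidistributed in that case. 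Finally I would observe that nothing above used symmetry of the shading of the five South-West boxes, which is precisely why the theorem is stated for arbitrary such shadings and, in particular, furnishes alternative proofs of the equidistributions for pairs 6 and 8.
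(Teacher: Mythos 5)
Your proposal is correct and follows exactly the paper's route: the paper proves this theorem in a single sentence by observing that the proof of Theorem~\ref{9-box-rs} is untouched by any (symmetric or not) shading of the boxes $(0,0),(0,1),(0,2),(1,0),(2,0)$, which is precisely your argument, only spelled out in more detail (persistence of observations (i)--(iii), the South-West boxes being insensitive to the swaps, transfer of the injectivity argument, and the essential role of $(0,3)$ and $(3,0)$ remaining unshaded). Nothing in your write-up deviates from, or falls short of, the paper's own justification.
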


\begin{proof} Our arguments in the proof of Theorem~\ref{9-box-rs} are independent from any (symmetric or not) shading of  (0,0), (0,1), (0,2), (1,0), and (2,0).
\end{proof}

\begin{remark} Note that the arguments in the proof of Theorem~\ref{thm-square-shape-1}  do not hold if we shade the boxes $(0,3)$ and $(3,0)$. For example, consider the pair $\pattern{scale = 0.5}{3}{1/1,2/2,3/3}{0/3,1/1, 1/2,1/3,2/1,2/2,2/3,3/0,3/1,3/2,3/3}\pattern{scale = 0.5}{3}{1/1,2/3,3/2}{0/3,1/1, 1/2,1/3,2/1,2/2,2/3,3/0,3/1,3/2,3/3}$, for which our experiments show that $\pattern{scale = 0.5}{3}{1/1,2/2,3/3}{0/3,1/1, 1/2,1/3,2/1,2/2,2/3,3/0,3/1,3/2,3/3}\not\sim_d\hspace{-0.15cm} \pattern{scale = 0.5}{3}{1/1,2/3,3/2}{0/3,1/1, 1/2,1/3,2/1,2/2,2/3,3/0,3/1,3/2,3/3}$. Then, for instance, consider the permutation $34125$ with two occurrence of $\pattern{scale = 0.5}{3}{1/1,2/2,3/3}{0/3,1/1, 1/2,1/3,2/1,2/2,2/3,3/0,3/1,3/2,3/3}$ and no occurrences of $\pattern{scale = 0.5}{3}{1/1,2/3,3/2}{0/3,1/1, 1/2,1/3,2/1,2/2,2/3,3/0,3/1,3/2,3/3}$. Moving the elements results in the permutation $35142$ with no occurrences of either of the patterns. In fact, even the first step of moving elements (resulting in the permutation $35124$) already fails, as two occurrences of $\pattern{scale = 0.5}{3}{1/1,2/2,3/3}{0/3,1/1, 1/2,1/3,2/1,2/2,2/3,3/0,3/1,3/2,3/3}$ are replaced by a single occurrence of $\pattern{scale = 0.5}{3}{1/1,2/3,3/2}{0/3,1/1, 1/2,1/3,2/1,2/2,2/3,3/0,3/1,3/2,3/3}$. \end{remark}

\begin{thm}\label{an-equidistr-thm}
The patterns in the sets $\left\{\hspace{-1mm}\pattern{scale=0.5}{3}{1/1,2/2,3/3}{0/0,0/1,0/2,1/1, 1/2,2/1,2/2,1/0,2/0},\hspace{-1mm}\pattern{scale=0.5}{3}{1/1,2/3,3/2}{0/0,0/1,0/2,1/1, 1/2,2/1,2/2,1/0,2/0},\hspace{-1mm}\pattern{scale=0.5}{3}{1/1,2/2,3/3}{1/1, 1/2,1/3,2/1,2/2,2/3,3/1,3/2,3/3},\hspace{-1mm}\pattern{scale=0.5}{3}{1/1,2/3,3/2}{1/1, 1/2,1/3,2/1,2/2,2/3,3/1,3/2,3/3}  \right\}$ and

$\left\{\hspace{-1mm}\pattern{scale=0.5}{3}{1/1,2/2,3/3}{0/0,0/1,0/2,1/1, 1/2,2/1,2/2,1/0,2/0,3/3},\hspace{-1mm}\pattern{scale=0.5}{3}{1/1,2/3,3/2}{0/0,0/1,0/2,1/1, 1/2,2/1,2/2,1/0,2/0,3/3},\hspace{-1mm}\pattern{scale=0.5}{3}{1/1,2/2,3/3}{0/0,1/1, 1/2,1/3,2/1,2/2,2/3,3/1,3/2,3/3},\hspace{-1mm}\pattern{scale=0.5}{3}{1/1,2/3,3/2}{0/0,1/1, 1/2,1/3,2/1,2/2,2/3,3/1,3/2,3/3}  \right\}$ have the same distribution.
\end{thm}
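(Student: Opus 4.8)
The plan is to reduce both assertions to equidistributions already proved in the paper, glued together by the reverse--complement symmetry $\pi\mapsto\pi^{rc}$ of $S_n$. Write $a,b,c,d$ for the four patterns of the first set, in the order listed: $a$ and $b$ are $123$ and $132$ with the lower-left $3\times 3$ square of boxes shaded (this is pair~30 in Table~\ref{tab-5}), and $c$ and $d$ are $123$ and $132$ with the upper-right $3\times 3$ square of boxes shaded (pair~46 in Table~\ref{tab-6}). Theorem~\ref{9-ls} gives $a\sim_d b$ and Theorem~\ref{9-box-rs} gives $c\sim_d d$, so it only remains to link the lower-left family $\{a,b\}$ with the upper-right family $\{c,d\}$.

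Here $rc$ does the job. The pattern $123$ is fixed by reverse--complement ($123^r=321$ and $321^c=123$), while $rc$ acts on the boxes of a length-$3$ mesh pattern by $\boks{i}{j}\mapsto\boks{3-i}{3-j}$; hence it carries the shading $\{\boks{i}{j}\colon 0\le i,j\le 2\}$ of $a$ exactly onto the shading $\{\boks{i}{j}\colon 1\le i,j\le 3\}$ of $c$, that is, $c=a^{rc}$. Since $\pi\mapsto\pi^{rc}$ is a bijection of $S_n$ under which occurrences of a mesh pattern $P$ in $\pi$ correspond bijectively to occurrences of $P^{rc}$ in $\pi^{rc}$, we get $a\sim_d c$, and chaining $b\sim_d a\sim_d c\sim_d d$ gives a common distribution for all four patterns of the first set. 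For the second set, whose patterns $a',b',c',d'$ arise from $a,b,c,d$ by additionally shading $\boks{3}{3}$ in $a',b'$ and $\boks{0}{0}$ in $c',d'$, Theorem~\ref{9-2s} gives $a'\sim_d b'$ (pair~31) and Theorem~\ref{thm-square-shape-1} gives $c'\sim_d d'$ (pair~47); and because $\boks{3}{3}^{rc}=\boks{0}{0}$, the same reverse--complement sends $a'$ to $c'$, so $a'\sim_d c'$ and again all four patterns share one distribution.

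No new bijection is needed; the combinatorial content is entirely carried by the constructions already made for pairs~30, 31, 46, and~47. The one place demanding care is the bookkeeping around $rc$: confirming that $123$ is $rc$-invariant, that $rc$ sends a box $\boks{i}{j}$ to $\boks{3-i}{3-j}$, and that this map carries the lower-left $3\times 3$ square of boxes onto the upper-right one (and $\boks{3}{3}$ onto $\boks{0}{0}$), since this is exactly what allows $rc$ to bridge the two families. Note also that $rc$ sends the $132$-patterns of each set to $213$-patterns rather than to the set's remaining patterns, so $rc$ alone does not deliver a joint statement across the two families; only the individual pairs~30, 31, 46, and~47 come with joint refinements, from their own proofs.
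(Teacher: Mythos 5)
Your proposal is correct and is essentially the paper's own argument: the paper likewise applies reverse followed by complement to carry the $123$ pattern with the lower-left $3\times 3$ shading (and its variant with $\boks{3}{3}$ added) onto the $123$ pattern with the upper-right shading (and $\boks{0}{0}$ added), and then invokes Theorems~\ref{9-ls}, \ref{9-2s}, \ref{9-box-rs} and~\ref{thm-square-shape-1} for the within-shading equidistributions of $123$ and $132$, exactly as you chain them. Your closing observation that $rc$ sends the $132$ patterns to $213$ patterns, so the bridge only works through the $123$ patterns, is also consistent with how the paper structures the deduction.
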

\begin{proof}

Applying the complement and reverse operations, we have: \\[-3mm]
\[
\pattern{scale=0.5}{3}{1/1,2/2,3/3}{0/0,0/1,0/2,1/1, 1/2,2/1,2/2,1/0,2/0} \xrightarrow{r} \pattern{scale=0.5}{3}{1/3,2/2,3/1}{1/1, 1/2,2/1,2/2,1/0,2/0,3/0,3/1,3/2} \xrightarrow{c} \pattern{scale=0.5}{3}{1/1,2/2,3/3}{1/1, 1/2,1/3,2/1,2/2,2/3,3/1,3/2,3/3}
\]
\[
\pattern{scale=0.5}{3}{1/1,2/2,3/3}{0/0,0/1,0/2,1/1, 1/2,2/1,2/2,1/0,2/0,3/3} \xrightarrow{r} \pattern{scale=0.5}{3}{1/3,2/2,3/1}{0/3,1/1, 1/2,2/1,2/2,1/0,2/0,3/0,3/1,3/2} \xrightarrow{c} \pattern{scale=0.5}{3}{1/1,2/2,3/3}{0/0,1/1, 1/2,1/3,2/1,2/2,2/3,3/1,3/2,3/3}
\]
Hence, $\pattern{scale=0.5}{3}{1/1,2/2,3/3}{0/0,0/1,0/2,1/1, 1/2,2/1,2/2,1/0,2/0} \sim_d \hspace{-0.15cm} \pattern{scale=0.5}{3}{1/1,2/2,3/3}{1/1, 1/2,1/3,2/1,2/2,2/3,3/1,3/2,3/3}$ and $\pattern{scale=0.5}{3}{1/1,2/2,3/3}{0/0,0/1,0/2,1/1, 1/2,2/1,2/2,1/0,2/0,3/3} \sim_d \hspace{-0.15cm} \pattern{scale=0.5}{3}{1/1,2/2,3/3}{0/0,1/1, 1/2,1/3,2/1,2/2,2/3,3/1,3/2,3/3}$. The result now follows from Theorems~\ref{9-ls}, \ref{9-2s}, \ref{9-box-rs} and~\ref{thm-square-shape-1}. 
\end{proof}

\begin{thm}\label{2x2-thm} We have $\pattern{scale = 0.5}{3}{1/1,2/2,3/3}{2/2,2/3, 3/2,3/3}\sim_d \hspace{-0.15cm}\pattern{scale = 0.5}{3}{1/1,2/3,3/2}{2/2,2/3, 3/2,3/3}$.
\end{thm}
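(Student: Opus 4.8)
The plan is to follow the proof of Theorem~\ref{9-box-rs} almost line for line, the only change being that the pivot of an occurrence is now its \emph{second} element rather than its first. Write $p_1,p_2$ for $123,132$ with the four boxes $(2,2),(2,3),(3,2),(3,3)$ shaded. First I would describe the occurrences. For $\pi=\pi_1\cdots\pi_n$ and $1\le b<n$ let $m(b)=\max\{\pi_{b+1},\dots,\pi_n\}$ and let $c(b)$ be its position. An occurrence of $p_1$ is a triple $\pi_a\pi_b\pi_c$ with $a<b<c$, $\pi_a<\pi_b<\pi_c$, for which $\pi_c$ is the \emph{only} element to the right of position $b$ exceeding $\pi_b$; this forces $c=c(b)$, $\pi_c=m(b)$, and is equivalent to ``$m(b)>\pi_b$ and $\pi_b$ exceeds every element of $\{\pi_{b+1},\dots,\pi_n\}\setminus\{m(b)\}$''. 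Call such a $b$ a $123$\emph{-pivot}; the number of occurrences of $p_1$ with middle position $b$ is then $\#\{a<b:\pi_a<\pi_b\}$. Dually, an occurrence of $p_2$ is a triple $\pi_a\pi_b\pi_c$ with $a<b<c$, $\pi_a<\pi_c<\pi_b$, and $\pi_c$ larger than every element to the right of position $b$; this again forces $c=c(b)$, $\pi_c=m(b)$, and is equivalent to ``$\pi_b>m(b)$''. Call such a $b$ a $132$\emph{-pivot}; the number of occurrences of $p_2$ with middle position $b$ is $\#\{a<b:\pi_a<m(b)\}$. The two pivot conditions are mutually exclusive, so the $p_1$-count (resp.\ $p_2$-count) of $\pi$ equals the sum of $\#\{a<b:\pi_a<\pi_b\}$ over the $123$-pivots (resp.\ of $\#\{a<b:\pi_a<m(b)\}$ over the $132$-pivots).

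The elementary move attached to a pivot $b$ is to transpose the values in positions $b$ and $c(b)$. If $b$ is a $123$-pivot this turns it into a $132$-pivot: because $\pi_b$ exceeds everything in $\{\pi_{b+1},\dots,\pi_n\}\setminus\{m(b)\}$, after the transposition the new value of $m(b)$ is exactly the old $\pi_b$, so $c(b)$ is unchanged and $\#\{a<b:\pi_a<m(b)\}$ in the new permutation equals $\#\{a<b:\pi_a<\pi_b\}$ in the old one (positions $<b$ being untouched); the move is its own inverse on this pivot and sends $132$-pivots back to $123$-pivots. Moreover, transposing the two values at positions $b<c(b)$ leaves the multiset of values of every suffix unchanged, and one checks that if the pairs $\{b,c(b)\}$ and $\{b',c(b')\}$ of two distinct pivots are disjoint then the move for $b$ changes neither the type nor the contribution of $b'$ --- the only potentially troublesome case, $b<b'<c(b)$, cannot occur because it forces $c(b')=c(b)$. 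Hence, were all the pairs $\{b,c(b)\}$ pairwise disjoint, performing every transposition at once would already prove the theorem.

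The remaining, and principal, difficulty is the interference phenomenon handled in Theorem~\ref{9-box-rs}: two pivots $b<b'$ interact exactly when $c(b)=c(b')$ or $c(b)=b'$. I would group the pivots into blocks under the transitive closure of this relation and, inside each block, run verbatim the left-to-right multi-step swapping procedure of Theorem~\ref{9-box-rs}, applied to the subword of $\pi$ formed by the second and third elements of all occurrences rooted in that block (at each step one moves the currently largest available element into place, converting one more $p_1$-occurrence into a $p_2$-occurrence, or vice versa, without creating or destroying any other occurrence). Then, re-running the analogues of observations (i)--(iii) and of Remark~\ref{moving-red-remark} from that proof, one checks that the procedure is well defined, that it is bijective (reversibility being clear, injectivity being argued as there), and that it preserves the contribution of every pivot --- the point being that, although the values in positions before a pivot $b$ may move, every block-value that lands before $b$ stays above the relevant threshold, so the first-element counts are unaffected. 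The genuinely new feature to verify is that a single block may now contain both $123$-pivots and $132$-pivots at once, which happens precisely in the $c(b)=b'$ case, where $b$ is a $123$-pivot whose third element sits at the position of a $132$-pivot $b'$; I expect this mixed-block bookkeeping to be the main obstacle, everything else being a transcription of the proof of Theorem~\ref{9-box-rs}.
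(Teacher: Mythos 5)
Your proposal is correct and is essentially the paper's own argument: the paper proves this precisely by rerunning the swapping procedure from the proof of Theorem~\ref{9-box-rs} on the second and third elements of occurrences, noting only that a first element cannot be the second or third element of another occurrence, that a first element may now have more than two larger elements to its right, and that a single swap may toggle several occurrences sharing the same second and third elements. The ``mixed-block'' case you single out as the main new obstacle is in fact already present and handled in Theorem~\ref{9-box-rs} (its blocks also mix $p_1$- and $p_2$-occurrences, and a third element there can serve as the second element of another occurrence), so nothing beyond the transcription you describe is needed.
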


\begin{proof}
The arguments in the proof of Theorem~\ref{9-box-rs} can be copied and pasted. Indeed, the first element $x_i$  in an occurrence of a pattern cannot be the second or third element in another occurrence of a pattern. The only difference now is that each $x_i$ can have more than two elements to its right that are larger than $x_i$, and also we can have occurrences of patterns $x_iab$ and $x_jab$ such that $x_i<x_j$ and $i<j$. This does not change the procedure for permuting the second and third elements in occurrences of the patterns, although at each step we might be swapping multiple occurrences of the patterns, not necessarily just one.
\end{proof}

The following theorem is also true for any shading of the boxes (0,0), (0,1), (1,0) and (1,1), whether symmetric or not, provided the boxes (2,2), (2,3), (3,2), and (3,3) are shaded.

\begin{thm}\label{2x2-thm-cor}  We have
$\pattern{scale = 0.5}{3}{1/1,2/2,3/3}{0/0,2/2,2/3, 3/2,3/3}\sim_d\hspace{-0.15cm} \pattern{scale = 0.5}{3}{1/1,2/3,3/2}{0/0,2/2,2/3, 3/2,3/3}$,  	
$\pattern{scale = 0.5}{3}{1/1,2/2,3/3}{0/1,1/0,2/2,2/3, 3/2,3/3}\sim_d\hspace{-0.15cm} \pattern{scale = 0.5}{3}{1/1,2/3,3/2}{0/1,1/0,2/2,2/3, 3/2,3/3}$, 
$\pattern{scale = 0.5}{3}{1/1,2/2,3/3}{0/0,0/1,1/0,2/2,2/3, 3/2,3/3}\sim_d\hspace{-0.15cm} \pattern{scale = 0.5}{3}{1/1,2/3,3/2}{0/0,0/1,1/0,2/2,2/3, 3/2,3/3}$, \\
$\pattern{scale = 0.5}{3}{1/1,2/2,3/3}{1/1,2/2,2/3, 3/2,3/3}\sim_d\hspace{-0.15cm} \pattern{scale = 0.5}{3}{1/1,2/3,3/2}{1/1,2/2,2/3, 3/2,3/3}$, 
$\pattern{scale = 0.5}{3}{1/1,2/2,3/3}{0/0,1/1,2/2,2/3, 3/2,3/3}\sim_d\hspace{-0.15cm} \pattern{scale = 0.5}{3}{1/1,2/3,3/2}{0/0,1/1,2/2,2/3, 3/2,3/3}$, 
$\pattern{scale = 0.5}{3}{1/1,2/2,3/3}{0/1,1/0,1/1,2/2,2/3, 3/2,3/3}\sim_d\hspace{-0.15cm} \pattern{scale = 0.5}{3}{1/1,2/3,3/2}{0/1,1/0,1/1,2/2,2/3, 3/2,3/3}$, and 
$\pattern{scale = 0.5}{3}{1/1,2/2,3/3}{0/0,0/1,1/0,1/1,2/2,2/3, 3/2,3/3}\sim_d\hspace{-0.15cm} \pattern{scale = 0.5}{3}{1/1,2/3,3/2}{0/0,0/1,1/0,1/1,2/2,2/3, 3/2,3/3}$.
\end{thm}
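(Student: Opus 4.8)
The plan is to mirror the proof of Theorem~\ref{2x2-thm}, which itself copies the proof of Theorem~\ref{9-box-rs}, and to argue exactly as in the proof of Theorem~\ref{thm-square-shape-1} that the bijective procedure used there is insensitive to how the boxes $(0,0)$, $(0,1)$, $(1,0)$ and $(1,1)$ are shaded, as long as the four North-East boxes $(2,2),(2,3),(3,2),(3,3)$ are shaded and every other box is unshaded. Granting this, running that procedure over all of $S_n$ interchanges the numbers of occurrences of the two patterns and settles pairs 53--59 of Table~\ref{tab-6} simultaneously, with the usual joint-equidistribution strengthening.

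First I would re-run the procedure of the proof of Theorem~\ref{2x2-thm} with the refined notion of occurrence in mind. For $\pi\in S_n$ one still lists the first elements $x_1>x_2>\cdots>x_k$ of all occurrences of the two patterns, groups them into blocks according to whether occurrences starting at consecutive $x_i$'s share an element, and then, block by block, permutes the values of the second and third elements of those occurrences while keeping their positions fixed. The only change brought by shading some of $(0,0),(0,1),(1,0),(1,1)$ is that fewer elements now qualify as first elements: such a candidate must additionally have the corresponding box empty, which is a condition on elements lying weakly to the left of, and weakly below, the second element of the occurrence; it does not change which elements are second or third elements, nor the block decomposition. A key, immediate observation is that the four new boxes, as regions of the plane, depend only on the three positions $i_1<i_2<i_3$ of an occurrence and on the sorted multiset of its three values --- not on which position carries which value --- so for every occurrence lying in a block other than the one currently being processed (whose positions and values the current swap leaves untouched) the new boxes are literally unchanged by a swap.

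The main obstacle is to verify that the swap procedure is genuinely compatible with the newly shaded boxes, and this splits into two checks. For an occurrence lying in the block currently being processed --- whose three values, hence whose horizontal lines, hence whose new boxes, the procedure may alter --- one must show that emptiness of its new boxes is preserved after its pattern has been swapped; and for an occurrence in another block one must show that no value relocated by the current swap can wander into one of its (unchanged) new boxes. I would carry out both by exploiting the North-East-box constraint, namely that to the right of the second element of any occurrence the third element is the unique element of largest value: this pins down, inside each block, where the second and third elements can sit relative to that block's first elements, and it forces all values that can legitimately occupy a given second position to relate to columns $0$ and $1$ in a uniform way, from which emptiness of the new boxes (when they are shaded) is preserved under the swaps. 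Together with the observation of the previous paragraph, this gives ``no full occurrence created or destroyed'', after which the reversibility and injectivity arguments of the proof of Theorem~\ref{9-box-rs} carry over verbatim. Finally I would stress that this is precisely the kind of subtlety flagged in the remark following Theorem~\ref{thm-square-shape-1}: shading additional boxes in columns or rows $2$ or $3$ would spoil the argument, which is why the hypothesis isolates $(0,0),(0,1),(1,0),(1,1)$ as the boxes whose shading may be chosen freely.
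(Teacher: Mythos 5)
Your proposal is correct and follows essentially the same route as the paper: the paper's proof of Theorem~\ref{2x2-thm-cor} likewise just re-runs the swap procedure of Theorems~\ref{9-box-rs} and~\ref{2x2-thm} and asserts that it is insensitive to any shading of the boxes $(0,0)$, $(0,1)$, $(1,0)$, $(1,1)$, since the first element of an occurrence is never the second or third element of another occurrence and the relocated elements never interact with those boxes. Your two explicit checks (the occurrence being flipped keeps its positions and value set, hence its low boxes and their emptiness; occurrences in other blocks are untouched and no moved value can enter their low boxes) are simply a more detailed write-up of the paper's one-line justification.
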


\begin{proof} Our arguments in the proof of Theorem~\ref{2x2-thm} are independent of (symmetric) shadings of the boxes (0,0), (0,1), (1,0), and (1,1). This is because the first element $x_i$ in an occurrence of a pattern cannot be the second or third element in another occurrence, and the shaded areas do not affect the procedures described in Theorems~\ref{9-box-rs} and~\ref{2x2-thm}. Hence, the result follows. \end{proof}

\begin{thm}\label{2x2+2-thm} We have $\pattern{scale = 0.5}{3}{1/1,2/2,3/3}{1/2,1/3,2/2,2/3, 2/1,3/1,3/2,3/3}\sim_d \hspace{-0.15cm}\pattern{scale = 0.5}{3}{1/1,2/3,3/2}{1/2,1/3,2/2,2/3, 2/1,3/1,3/2,3/3}$.
\end{thm}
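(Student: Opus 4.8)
The plan is to follow the proof of Theorem~\ref{9-box-rs} almost verbatim. Write $p_1$ and $p_2$ for the two patterns in the statement; their shading is that of Theorem~\ref{9-box-rs} with the single box $(1,1)$ left unshaded. The first step is to describe the structure of an occurrence $x\pi_j\pi_\ell$ (positions $i<j<\ell$) of $p_1$ or $p_2$ in $\pi=\pi_1\cdots\pi_n$. Because the boxes $(1,2),(1,3),(2,1),(2,2),(2,3),(3,1),(3,2),(3,3)$ are shaded, exactly as in Theorem~\ref{9-box-rs} the elements $\pi_j,\pi_\ell$ are the two largest values occurring to the right of $x$, no value to the right of $x$ lies strictly between them or above them, and the occurrence is of type $p_1$ or $p_2$ according to whether the smaller or the larger of $\pi_j,\pi_\ell$ is in position $j$; call $\pi_j,\pi_\ell$ the \emph{large elements} of the occurrence and let $m<M$ be their values. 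The one new feature, due to $(1,1)$ being unshaded, is that to the right of $x$ there may now also be elements of value in $(x,m)$; the shaded boxes $(2,1)$ and $(3,1)$ force all of these to lie positionally between $x$ and the leftmost large element (the element in position $j$). As in the proof of Theorem~\ref{9-box-rs}, $x$ determines its occurrence uniquely, since the unordered pair $\{m,M\}$, the positions of the $m$- and $M$-valued elements, and hence the type, are all determined by $x$.

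The key observation that lets the rest of the proof of Theorem~\ref{9-box-rs} go through unchanged is that none of these new small elements can be the second or third element of any occurrence. Indeed, suppose an element $e$ of value $v$ lies strictly between a first element $x_i$ of an occurrence and the leftmost large element of that occurrence; then both large elements of $x_i$'s occurrence lie to the right of $e$ and have value $\ge m_i>v$, so for the first element $x_j$ of any occurrence having $e$ as its second or third element — which forces $x_j$ to be to the left of $e$ — there are at least two elements of value $>v$ to the right of $x_j$, whence $v$ is at best the third largest value to the right of $x_j$ and $e$ cannot be one of $x_j$'s large elements. Consequently the subsequence of all second and third elements of occurrences is disjoint from the set of the new small elements, and the procedure of Theorem~\ref{9-box-rs} — grouping the first elements $x_1>x_2>\cdots>x_k$ of occurrences into blocks according to whether consecutive ones share a large element, and then, within each block, rearranging the large elements by a succession of swaps, each of which interchanges a set of $p_1$-occurrences with a set of $p_2$-occurrences — applies with no change, since the new small elements sit in fixed positions that are disjoint from all large-element positions and are untouched by the procedure.

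The remaining, and expected main, task is to re-run the verifications from the proof of Theorem~\ref{9-box-rs} — the analogues of observations (i)--(iii), the assertion that each swap creates and destroys no unintended occurrences, and the injectivity argument based on Remark~\ref{moving-red-remark} — in the presence of the new small elements. The facts that should make this routine are that any large element lying positionally between a first element $x_i$ and the leftmost large element of $x_i$'s occurrence must have value $<x_i$ (a value in $(x_i,m_i)$ would make it one of the new small elements, contradicting the disjointness just proved, and values $\ge m_i$ there are ruled out by the shaded boxes $(1,2),(1,3)$), so that moving it never affects $x_i$'s occurrence, together with the fact that the new small elements are never moved; with these, the case analysis of Theorem~\ref{9-box-rs} should carry over and produce the desired bijection interchanging occurrences of $p_1$ and $p_2$.
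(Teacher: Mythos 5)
Your proposal is correct and follows essentially the same route as the paper: the paper deduces this case from Theorem~\ref{2x2-thm} (itself a copy-and-paste of the proof of Theorem~\ref{9-box-rs}), recording exactly the structural facts you establish — that in an occurrence $x_iab$ no element exceeding $\min\{a,b\}$ lies between $x_i$ and $a$, and no element larger than $x_i$ other than $b$ lies to the right of $a$ — and asserting that the element-moving procedure is unaffected. Your extra verification that the elements allowed by the unshaded box $(1,1)$ can never be the second or third element of any occurrence, and hence are never moved, is correct and simply makes explicit what the paper leaves implicit.
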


\begin{proof} The arguments in the proof of Theorem~\ref{2x2-thm} can be copied and pasted. The only difference is that if $x_iab$ is an occurrence of a pattern in
 question then there are no elements greater than $\min\{a,b\}$ in positions between $x_i$ and $a$, and there are no elements to the right of $a$, different from $b$, that are larger than $x_i$. This does not affect the procedure of moving the elements.   \end{proof}

\begin{remark} Note that the arguments in the proof of Theorem~\ref{2x2+2-thm}  do not work for the pair $\pattern{scale = 0.5}{3}{1/1,2/2,3/3}{1/3,2/2,2/3, 3/1,3/2,3/3}\pattern{scale = 0.5}{3}{1/1,2/3,3/2}{1/3,2/2,2/3,3/1,3/2,3/3}$, for which our experiments show that $\pattern{scale = 0.5}{3}{1/1,2/2,3/3}{1/3,2/2,2/3, 3/1,3/2,3/3}\not\sim_d \hspace{-0.15cm}\pattern{scale = 0.5}{3}{1/1,2/3,3/2}{1/3,2/2,2/3,3/1,3/2,3/3}$. For example, consider the permutation $1324$ with two occurrence of $\pattern{scale = 0.5}{3}{1/1,2/2,3/3}{1/3,2/2,2/3, 3/1,3/2,3/3}$ and no occurrences of $\pattern{scale = 0.5}{3}{1/1,2/3,3/2}{1/3,2/2,2/3,3/1,3/2,3/3}$. Moving the elements results in the permutation $1432$ with no occurrences of either of the patterns. In fact, even the first step of moving elements (resulting in the permutation $1423$) already fails, as two occurrences of $\pattern{scale = 0.5}{3}{1/1,2/2,3/3}{1/3,2/2,2/3, 3/1,3/2,3/3}$ are replaced by a single occurrence of $\pattern{scale = 0.5}{3}{1/1,2/3,3/2}{1/3,2/2,2/3, 3/1,3/2,3/3}$. \end{remark}

The following theorem is also true for any shading of the boxes (0,0), (0,1), (0,2), (1,0) and (2,0), whether symmetric or not, provided the shading in Theorem~\ref{2x2+2-thm}.

\begin{thm}\label{2x2+2-thm-cor}  We have
$\pattern{scale = 0.5}{3}{1/1,2/2,3/3}{0/0,1/2,1/3,2/2,2/3, 2/1,3/1,3/2,3/3}\sim_d\hspace{-0.15cm} \pattern{scale = 0.5}{3}{1/1,2/3,3/2}{0/0,1/2,1/3,2/2,2/3, 2/1,3/1,3/2,3/3}$,  	
$\pattern{scale = 0.5}{3}{1/1,2/2,3/3}{0/1,1/0,1/2,1/3,2/2,2/3, 2/1,3/1,3/2,3/3}\sim_d\hspace{-0.15cm} \pattern{scale = 0.5}{3}{1/1,2/3,3/2}{0/1,1/0,1/2,1/3,2/2,2/3, 2/1,3/1,3/2,3/3}$, 
$\pattern{scale = 0.5}{3}{1/1,2/2,3/3}{0/0,0/1,1/0,1/2,1/3,2/2,2/3, 2/1,3/1,3/2,3/3}\sim_d\hspace{-0.15cm} \pattern{scale = 0.5}{3}{1/1,2/3,3/2}{0/0,0/1,1/0,1/2,1/3,2/2,2/3, 2/1,3/1,3/2,3/3}$, \\
$\pattern{scale = 0.5}{3}{1/1,2/2,3/3}{0/2,1/2,1/3,2/0,2/2,2/3, 2/1,3/1,3/2,3/3}\sim_d\hspace{-0.15cm} \pattern{scale = 0.5}{3}{1/1,2/3,3/2}{0/2,1/2,1/3,2/0,2/2,2/3, 2/1,3/1,3/2,3/3}$, 
$\pattern{scale = 0.5}{3}{1/1,2/2,3/3}{0/0,0/2,1/2,1/3,2/0,2/2,2/3, 2/1,3/1,3/2,3/3}\sim_d\hspace{-0.15cm} \pattern{scale = 0.5}{3}{1/1,2/3,3/2}{0/0,0/2,1/2,1/3,2/0,2/2,2/3, 2/1,3/1,3/2,3/3}$, 
$\pattern{scale = 0.5}{3}{1/1,2/2,3/3}{0/1,0/2,1/0,1/2,1/3,2/2,2/3, 2/0,2/1,3/1,3/2,3/3}\sim_d\hspace{-0.15cm} \pattern{scale = 0.5}{3}{1/1,2/3,3/2}{0/1,0/2,1/0,1/2,1/3,2/0,2/2,2/3, 2/1,3/1,3/2,3/3}$, and 
$\pattern{scale = 0.5}{3}{1/1,2/2,3/3}{0/0,0/1,0/2,1/0,1/2,1/3,2/0,2/2,2/3, 2/1,3/1,3/2,3/3}\sim_d\hspace{-0.15cm} \pattern{scale = 0.5}{3}{1/1,2/3,3/2}{0/0,0/1,0/2,1/0,1/2,1/3,2/0,2/2,2/3, 2/1,3/1,3/2,3/3}$.
\end{thm}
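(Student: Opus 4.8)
The plan is to reuse the proof of Theorem~\ref{2x2+2-thm} essentially verbatim and to show that the bijective swapping procedure it employs is insensitive to any shading placed in the five boxes $(0,0)$, $(0,1)$, $(0,2)$, $(1,0)$, and $(2,0)$, in exactly the way Theorem~\ref{2x2-thm-cor} is obtained from Theorem~\ref{2x2-thm} and Theorem~\ref{thm-square-shape-1} from Theorem~\ref{9-box-rs}. Every one of the seven pairs in the statement arises from the pair of Theorem~\ref{2x2+2-thm} by shading some antidiagonally symmetric subset of these five boxes, so a single insensitivity argument handles all seven cases at once.

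Concretely, I would fix one such pair, write $p_1$ and $p_2$ for its $123$- and $132$-shaded members, take $\pi\in S_n$, and re-run the set-up of Theorem~\ref{2x2+2-thm}: the North-East $3\times 3$ core of the shading still forces that the first element $x_i$ of any occurrence is never a second or third element of an occurrence, that an occurrence $x_iab$ has no element $>\min\{a,b\}$ strictly between $x_i$ and $a$ and no element $\ne b$ to the right of $a$ exceeding $x_i$, and that the first elements of occurrences break into the same blocks. I would then apply the identical left-to-right procedure of permuting, inside each block, the subsequence of second and third elements of its occurrences, and check, as before, that every elementary swap exchanges an occurrence of $p_1$ with one of $p_2$ and neither creates nor destroys any other occurrence; reversibility (perform the swaps right-to-left) then yields a bijection of $S_n$ interchanging the two statistics, which is the desired equidistribution.

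The only genuinely new point over Theorem~\ref{2x2+2-thm}, and the step I expect to require care, is the claim that shading any of $(0,0)$, $(0,1)$, $(0,2)$, $(1,0)$, $(2,0)$ cannot obstruct a swap. The reason is that each of these boxes lies entirely to the left of the first element of an occurrence (the column-$0$ boxes) or entirely below its value (the row-$0$ boxes): a box $(0,j)$ imposes a condition on the value-rank of entries positioned to the left of the first element, which is read identically by $p_1$ and $p_2$ since it depends only on the unordered triple of the occurrence, while $(1,0)$ and $(2,0)$ impose conditions on entries positioned between the occurrence's entries that carry a value below the first element. The procedure only permutes entries lying in the union of the North-East quadrants of the first elements of a block, and any entry occupying a slot in some occurrence's pair of non-first positions necessarily exceeds that occurrence's first element and sits to its right; I would use this to argue that a swap can neither introduce an entry into nor remove one from any of these five boxes relative to any occurrence, so the side condition attached to each such box holds after a swap exactly when it held before, and in particular no occurrence is silently created or destroyed. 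Pinning this down — noting that although the two entries forming a given occurrence may change their values, and even their positions, during the procedure, the value-rank thresholds and the positions of the relevant between-entries do not — is routine once the block-structure diagram is at hand and involves no computation; it is this bookkeeping, rather than any new idea, that makes up the bulk of the work.
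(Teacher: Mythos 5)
Your proposal is correct and takes essentially the same route as the paper: the paper's proof consists precisely of the observation that the swapping procedure from Theorem~\ref{2x2+2-thm} is unaffected by any (symmetric or not) shading of the boxes $(0,0)$, $(0,1)$, $(0,2)$, $(1,0)$, $(2,0)$, which is exactly your plan. Your extra bookkeeping about why those boxes cannot obstruct a swap only spells out what the paper leaves implicit.
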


\begin{proof} Our arguments in the proof of Theorem~\ref{2x2+2-thm} are independent of (symmetric) shadings of the boxes (0,0), (0,1), (0,2), (1,0) and (2,0). Hence, the result follows. \end{proof}

\begin{thm}\label{2x2+2+2-thm} We have $\pattern{scale = 0.5}{3}{1/1,2/2,3/3}{0/2,0/3,2/2,2/3, 2/0,3/0,3/2,3/3}\sim_d \hspace{-0.15cm}\pattern{scale = 0.5}{3}{1/1,2/3,3/2}{0/2,0/3,2/2,2/3, 2/0,3/0,3/2,3/3}$.
\end{thm}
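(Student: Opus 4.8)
The plan is to follow the proof of Theorem~\ref{9-box-rs} almost verbatim, as was done for Theorems~\ref{2x2-thm} and~\ref{2x2+2-thm}: I will build an involution on $S_n$ that permutes the second and third elements of the occurrences of the two patterns so that each occurrence of $p_1=\pattern{scale = 0.5}{3}{1/1,2/2,3/3}{0/2,0/3,2/2,2/3, 2/0,3/0,3/2,3/3}$ becomes an occurrence of $p_2=\pattern{scale = 0.5}{3}{1/1,2/3,3/2}{0/2,0/3,2/2,2/3, 2/0,3/0,3/2,3/3}$ and conversely; this also gives the (joint) equidistribution.

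First I would record the local structure of an occurrence. If $\pi_a\pi_b\pi_c$, $a<b<c$, is an occurrence of $p_1$ or $p_2$ in $\pi=\pi_1\cdots\pi_n$, then reading off the eight shaded boxes yields: (a) every $\pi_d$ with $d<a$ has value below the middle value of $\{\pi_a,\pi_b,\pi_c\}$ (boxes $(0,2),(0,3)$); (b) every $\pi_d$ with $b<d<c$ or $d>c$ has value strictly between the smallest and the middle value of $\{\pi_a,\pi_b,\pi_c\}$ (boxes $(2,0),(2,2),(2,3),(3,0),(3,2),(3,3)$); (c) positions strictly between $a$ and $b$ are unrestricted. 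Since the first element of a $123$- or $132$-occurrence is its smallest element, conditions (a)--(c) are determined by the positions $a<b<c$ and the \emph{set} of values $\{\pi_a,\pi_b,\pi_c\}$ alone — they do not see which of $\pi_b,\pi_c$ is larger. Hence transposing the positions of the second and third elements of an occurrence turns $p_1$ into $p_2$ and back, preserving every region the occurrence controls. A routine case analysis as in Theorem~\ref{9-box-rs}, using (a), (b) and the minimality of the first element, further shows that the first element of an occurrence is never the second or third element of any other occurrence.

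Next I would reuse the block machinery of Theorem~\ref{9-box-rs}: list the first elements of all occurrences as $x_1>x_2>\cdots>x_k$; group them into blocks, joining $x_i$ and $x_{i+1}$ whenever occurrences starting at them share an element; and for each block form the subsequence $\pi_{i_1}\cdots\pi_{i_s}$ of $\pi$ of all second and third elements of occurrences beginning in that block, which, as in Figure~\ref{fig-4-1}, lies in the North-East area of every first element of the block. I would then apply, word for word, the left-to-right moving procedure of Theorem~\ref{9-box-rs} — repeatedly carry the largest still-unmoved element to its slot, flipping $p_1\leftrightarrow p_2$ along a contiguous run of first elements — do this for every block to obtain $\pi'$, and derive bijectivity from the injectivity argument there together with Remark~\ref{moving-red-remark}. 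As in Theorem~\ref{2x2-thm}, the only differences from Theorem~\ref{9-box-rs} are that a first element may have more than two larger elements to its right and that two distinct first elements may share their second and third elements; neither changes the procedure.

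The hard part will be checking that one swap inside $\pi_{i_1}\cdots\pi_{i_s}$ creates no new occurrence and destroys no occurrence outside the block being processed — i.e. establishing the exact analogues of observations (i)--(iii) of Theorem~\ref{9-box-rs} for this shading. Two shaded families here are absent from Theorem~\ref{2x2-thm}: $\{(0,2),(0,3)\}$ and $\{(2,0),(3,0)\}$. The first is painless, since the moving procedure never touches an element to the left of any first element, so (a) stays true automatically. The second is the subtle one: by (b) the segment after an occurrence's second element (save its third element) must consist of values \emph{exceeding} the first element, so I must verify that after a swap every such position still carries a value larger than the relevant first element. I expect this to reduce, along the chain of shared elements defining a block, to the statement that every second or third element of a block-occurrence exceeds every first element of the block, configurations violating which would force a neighbouring occurrence to contain an element in one of its shaded boxes. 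Granting these analogues, the procedure is well defined, reversible and bijective, which proves the theorem.
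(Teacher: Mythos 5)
Your proposal follows essentially the same route as the paper: the published proof of this theorem is a single sentence asserting that the element-moving procedure of Theorem~\ref{2x2-thm} (itself inherited from Theorem~\ref{9-box-rs}) is simply not affected by additionally shading the boxes $(0,2)$, $(0,3)$, $(2,0)$, $(3,0)$. Your structural observations (a)--(c), the fact that a first element is never a second or third element of another occurrence, and the reduction you flag as ``the hard part'' are all correct and in fact supply more detail than the paper itself provides, so the approach matches.
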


\begin{proof} Our procedure for moving elements in the proof of Theorem~\ref{2x2-thm} is not affected by shading the boxes (0,2), (0,3), (2,0), and (3,0).  \end{proof}

The following theorem is also true for any shading of the boxes (0,0), (0,1), (1,0) and (1,1), whether symmetric or not, provided the shading in Theorem~\ref{2x2+2+2-thm}.

\begin{thm}\label{2x2+2+2-thm-cor}  We have
$\pattern{scale = 0.5}{3}{1/1,2/2,3/3}{0/0,0/2,0/3,2/0,2/2,2/3, 3/0,3/2,3/3}\sim_d\hspace{-0.15cm} \pattern{scale = 0.5}{3}{1/1,2/3,3/2}{0/0,0/2,0/3,2/0,2/2,2/3, 3/0,3/2,3/3}$,  	
$\pattern{scale = 0.5}{3}{1/1,2/2,3/3}{0/1,0/2,0/3,1/0,2/0,2/2,2/3, 3/0,3/2,3/3}\sim_d\hspace{-0.15cm} \pattern{scale = 0.5}{3}{1/1,2/3,3/2}{0/1,0/2,0/3,1/0,2/0,2/2,2/3, 3/0,3/2,3/3}$, 
$\pattern{scale = 0.5}{3}{1/1,2/2,3/3}{0/0,0/1,0/2,0/3,1/0,2/0,2/2,2/3, 3/0,3/2,3/3}\sim_d\hspace{-0.15cm} \pattern{scale = 0.5}{3}{1/1,2/3,3/2}{0/0,0/1,0/2,0/3,1/0,2/0,2/2,2/3, 3/0,3/2,3/3}$, \\
$\pattern{scale = 0.5}{3}{1/1,2/2,3/3}{0/2,0/3,1/1,2/0,2/2,2/3, 3/0,3/2,3/3}\sim_d\hspace{-0.15cm} \pattern{scale = 0.5}{3}{1/1,2/3,3/2}{0/2,0/3,1/1,2/0,2/2,2/3, 3/0,3/2,3/3}$, 
$\pattern{scale = 0.5}{3}{1/1,2/2,3/3}{0/0,0/2,0/3,1/1,2/0,2/2,2/3, 3/0,3/2,3/3}\sim_d\hspace{-0.15cm} \pattern{scale = 0.5}{3}{1/1,2/3,3/2}{0/0,0/2,0/3,1/1,2/0,2/2,2/3, 3/0,3/2,3/3}$, 
$\pattern{scale = 0.5}{3}{1/1,2/2,3/3}{0/1,0/2,0/3,1/0,1/1,2/0,2/2,2/3, 3/0,3/2,3/3}\sim_d\hspace{-0.15cm} \pattern{scale = 0.5}{3}{1/1,2/3,3/2}{0/1,0/2,0/3,1/0,1/1,2/0,2/2,2/3, 3/0,3/2,3/3}$, and 
$\pattern{scale = 0.5}{3}{1/1,2/2,3/3}{0/0,0/1,0/2,0/3,1/0,1/1,2/0,2/2,2/3, 3/0,3/2,3/3}\sim_d\hspace{-0.15cm} \pattern{scale = 0.5}{3}{1/1,2/3,3/2}{0/0,0/1,0/2,0/3,1/0,1/1,2/0,2/2,2/3, 3/0,3/2,3/3}$.
\end{thm}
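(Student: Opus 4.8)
The plan is to follow the same template as the proofs of Theorems~\ref{2x2-thm-cor} and~\ref{2x2+2-thm-cor}: I would argue that the bijective swapping procedure established in the proof of Theorem~\ref{2x2+2+2-thm}—which itself rests on the procedure of Theorem~\ref{2x2-thm} and ultimately on that of Theorem~\ref{9-box-rs}—is insensitive to whatever we do with the boxes $(0,0)$, $(0,1)$, $(1,0)$, and $(1,1)$, provided that the boxes prescribed in Theorem~\ref{2x2+2+2-thm} remain shaded. Since every pattern listed in the statement is obtained from the patterns of Theorem~\ref{2x2+2+2-thm} by shading some subset of those four boxes, the seven equidistributions then all follow at once.

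First I would recall the two features of that procedure that make it robust. One: by the analogue of observation~(i) in the proof of Theorem~\ref{9-box-rs}, the first element $x_i$ of an occurrence of either pattern can never play the role of the second or third element of another occurrence; hence the involution only permutes the subword consisting of the second and third elements of occurrences, leaving every $x_i$ fixed (in both position and value) and never inserting anything into a position to the left of any $x_i$. Two: the correctness argument (``no new occurrence is created and none is destroyed'') relies solely on the principle that an element lying in a box shaded with respect to a pair witnessing a non-occurrence continues to lie in that box after a swap; this principle concerns only relative positions of points and is unaffected by which boxes are declared shaded.

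Next I would observe that each of the four boxes $(0,0)$, $(0,1)$, $(1,0)$, $(1,1)$ sits in a column at or before the second point of the pattern and in a value band strictly below the second point, i.e.\ in the South-West corner of an occurrence—precisely the region the procedure never modifies: a swap changes neither the position nor the value of $x_i$, moves nothing into the columns to the left of $x_i$, and alters nothing below $x_i$. Consequently, shading any subset of these four boxes only imposes extra constraints on the \emph{candidate first elements} of occurrences (and on elements lying strictly between $x_i$ and the second element, below the second element in value), and such constraints are automatically inherited through every step of the swap. Therefore the procedure applied to a permutation with this additional shading is still a bijection of $S_n$ exchanging the numbers of occurrences of the two patterns, which yields the claimed equidistributions (and, as usual, the stronger joint equidistributions).

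The step I expect to require the most care is exactly this verification that the four extra boxes lie entirely outside the ``active region'' of the procedure, simultaneously for patterns $123$ and $132$: the cautionary Remark~\ref{subtle-rem} and the remark following Theorem~\ref{thm-square-shape-1} show that shading the wrong box (there, $(0,3)$ or $(3,0)$) can destroy the bijection, so the argument must be spelled out box by box and pattern by pattern. Since all four boxes are confined to columns at or before the second element and to values at or below the second element's value, one checks that they constrain only elements that are never touched, created, or have their role in an occurrence changed by a swap; this is routine but should be stated explicitly rather than merely asserted.
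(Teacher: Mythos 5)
Your proposal matches the paper's proof: the paper likewise disposes of all seven pairs in one stroke by observing that the swapping procedure from Theorem~\ref{2x2+2+2-thm} (inherited from Theorems~\ref{9-box-rs} and~\ref{2x2-thm}) is unaffected by any shading of the boxes $(0,0)$, $(0,1)$, $(1,0)$, $(1,1)$. Your box-by-box justification is somewhat more detailed than the paper's one-line argument, but the underlying idea and route are identical.
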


\begin{proof} Our arguments in the proof of Theorem~\ref{2x2+2+2-thm} are independent of (symmetric) shadings of the boxes (0,0), (0,1), (1,0), and (1,1). We are done. \end{proof}

We conclude this section by proving equidistribution for two additional pairs of patterns, this time using both Theorems~\ref{thm-L-shape} and~\ref{9-box-rs}.

\begin{thm}\label{L+square} We have $\pattern{scale = 0.5}{3}{1/1,2/2,3/3}{0/0,0/1,0/2,1/0,2/2,2/3, 2/0,3/2,3/3}\sim_d \hspace{-0.15cm}\pattern{scale = 0.5}{3}{1/1,2/3,3/2}{0/0,0/1,0/2,1/0,2/2,2/3, 2/0,3/2,3/3}$ and $\pattern{scale = 0.5}{3}{1/1,2/2,3/3}{0/0,0/1,0/2,1/0,1/1,2/2,2/3, 2/0,3/2,3/3}\sim_d \hspace{-0.15cm}\pattern{scale = 0.5}{3}{1/1,2/3,3/2}{0/0,0/1,0/2,1/0,1/1,2/2,2/3, 2/0,3/2,3/3}$.
\end{thm}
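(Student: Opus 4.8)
The plan is to combine the region decomposition from the proof of Theorem~\ref{thm-L-shape} with the length-$2$ reductions used in Theorems~\ref{9-ls} and~\ref{9-2s}. Given $\pi=\pi_1\cdots\pi_n$, I would let $x_1>x_2>\cdots>x_k$ be its left-to-right minima, with the conventions $x_0:=n+1$ and $\mathrm{pos}(x_{k+1}):=n+1$. First I would observe that, since the boxes $(0,0),(0,1),(0,2),(1,0),(2,0)$ are shaded in both patterns of each pair, every occurrence begins with some $x_i$, and, exactly as in the proof of Theorem~\ref{thm-L-shape}, its second and third elements have values in $(x_i,x_{i-1})$ and positions strictly between $\mathrm{pos}(x_i)$ and $\mathrm{pos}(x_{i+1})$. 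Next I would use the shaded boxes $(2,2),(2,3),(3,2),(3,3)$ to show that every element lying between $\mathrm{pos}(x_i)$ and $\mathrm{pos}(x_{i+1})$ of value at least $x_{i-1}$ must sit to the left of the second element of any occurrence beginning with $x_i$ (such an element has value exceeding both the second and third elements, which lie in $(x_i,x_{i-1})$, and the square shading forbids it to the right of the second element; note also that this forces the third element to be a right-to-left maximum of $\pi$). Then I would define $A_i$ to be the set of elements of value in $(x_i,x_{i-1})$ lying between $\mathrm{pos}(x_i)$ and $\mathrm{pos}(x_{i+1})$ and to the right of every element of that window of value $\ge x_{i-1}$; then the second and third elements of every occurrence beginning with $x_i$ belong to $A_i$, and $A_1,\dots,A_k$ are pairwise disjoint.

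The second step is to identify the length-$2$ patterns induced on $A_i$ by deleting the first point $x_i$ from a length-$3$ occurrence. For the first pair these are $\pattern{scale=0.5}{2}{1/1,2/2}{1/1,1/2,2/1,2/2}$ and $\pattern{scale=0.5}{2}{1/2,2/1}{1/1,1/2,2/1,2/2}$, while for the second pair the extra shaded box $(1,1)$ contributes the box $(0,0)$, giving $\pattern{scale=0.5}{2}{1/1,2/2}{0/0,1/1,1/2,2/1,2/2}$ and $\pattern{scale=0.5}{2}{1/2,2/1}{0/0,1/1,1/2,2/1,2/2}$. Applying reverse and complement, which fix the underlying patterns $12$ and $21$, the first of these pairs becomes the pair of Theorem~\ref{equdis-2-patterns-length-2} and the second becomes the pair of Corollary~\ref{two-pat-length-2}; both of those were proved by a bijection interchanging occurrences, a property preserved by reverse and complement. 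Consequently, for $a$ before $b$ in $A_i$, the triple $x_iab$ is an occurrence of $p_1$ (resp.\ $p_2$) precisely when $ab$ is an occurrence in $A_i$ of the corresponding length-$2$ analogue.

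The bijection is then obtained, as in Theorems~\ref{9-ls} and~\ref{9-2s}, by rearranging the elements of each $A_i$ among the positions they occupy according to the relevant length-$2$ bijection, independently over $i$, while keeping everything else fixed; this interchanges the occurrences of $p_1$ and $p_2$ beginning with each $x_i$, hence globally. The routine part is checking that no occurrence outside $\bigcup_iA_i$ is created or destroyed: an element in a shaded box relative to some non-occurrence stays there, because the rearrangement changes neither values nor positions outside $\bigcup_iA_i$ and keeps each $A_i$ on the same set of positions. I expect the main obstacle to be the interaction, inside $A_i$, between the rearranged elements and the ``large'' elements of value $\ge x_{i-1}$ lying to the left of $A_i$ in its window; this is exactly what the North-East square shading controls, since it keeps the second and third element of every occurrence to the right of all those large elements, so the rearrangement can never move a large element to the right of a second element. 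Making this point precise — essentially justifying the definition of $A_i$ above — is the crux of the argument. The second pair needs no new idea: the only change is the extra shaded box $(1,1)$, which merely replaces Theorem~\ref{equdis-2-patterns-length-2} by Corollary~\ref{two-pat-length-2} in the length-$2$ reduction.
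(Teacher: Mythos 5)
Your setup is correct as far as it goes (occurrences start at left-to-right minima, the second and third elements lie in your restricted $A_i$, the third element is a right-to-left maximum, and the reverse–complement transport of the length-2 pairs is right), but the pivotal equivalence — ``for $a$ before $b$ in $A_i$, the triple $x_iab$ is an occurrence of $p_1$ (resp.\ $p_2$) precisely when $ab$ is an occurrence in $A_i$ of the length-2 analogue'' — is false, and with it the claim that rearranging each $A_i$ by the length-2 bijection interchanges occurrences. The shaded boxes $(2,3),(3,2),(3,3)$ of the length-3 patterns constrain \emph{every} element of $\pi$ to the right of the second element of an occurrence, including elements positioned after $\mathrm{pos}(x_{i+1})$, i.e., outside the window; these are invisible to the length-2 pattern computed inside $A_i$. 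Concretely, for $\pi=23415$ one has $x_1=2$, $x_2=1$, $A_1=34$, and $34$ is an occurrence of the length-2 analogue inside $A_1$, yet $2\,3\,4$ is not an occurrence of $p_1$: the element $5$ lies after the window and occupies box $(3,3)$. Your map would nevertheless swap $3$ and $4$. (You do dispose correctly of the ``large'' elements inside the window, but the real obstruction sits to the right of $x_{i+1}$, which you never consider.) For the second pair there is a further mismatch: the shaded box $(1,1)$ sees window elements that you excluded from $A_i$; e.g.\ in $\pi=512634$ the pair $34$ is an occurrence of the length-2 analogue in your $A_2=34$, but $1\,3\,4$ is not an occurrence because of the element $2$.

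What is actually true (for the first pair) is that $x_iab$ is an occurrence if and only if $ab$ is an occurrence of the length-2 analogue in the subsequence of $A_i$ formed by the elements exceeding every element of $\pi$ to the right of the window; so the set of occurrences beginning at $x_i$ is not determined by $A_i$ alone, and applying the plain length-2 bijection to all of $A_i$ is not known to exchange them — your concluding ``routine'' check only concerns elements outside $\bigcup_i A_i$ relative to unchanged non-occurrences and does not touch this interaction. To repair the argument you would need either to run the length-2 bijection on that threshold-restricted subsequence (and then re-verify that no occurrences are created or destroyed), or to prove that your chosen bijection commutes with restriction to the elements above an arbitrary threshold; neither is in the proposal. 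The paper's proof sidesteps the issue entirely: it applies the swapping procedure of Theorem~\ref{9-box-rs} within each $A_i$ to the \emph{actual} occurrences of $p_1$ and $p_2$ only, so its correctness argument is carried out with respect to $\pi$ itself rather than to a standalone copy of $A_i$.
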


\begin{proof} Referring to the notation in the proof of Theorem~\ref{thm-L-shape}, each occurrence of any of the four patterns must contain the second and third elements within an $A_i$. Therefore, the procedure of permuting some elements, as explained in the proof of Theorem~\ref{9-box-rs}, will be applied independetly within each $A_i$.  All arguments will hold regardless of whether  the box (1,1) is shaded or not.  \end{proof}

\section{Concluding remarks}\label{concluding-sec}
 
In this paper, we proved 75 equidistributions for mesh patterns 123 and 132 out of a maximum of 93 potential equidistributions, where shading is symmetric relative to the anti-diagonal. The cases we did not solve are presented in Table~\ref{unsolved-table}. A few comments on these cases are in order.
 
\begin{table}[t]
  {
   \renewcommand{\arraystretch}{1.5}
 \begin{center} 
   \begin{tabular}{|c|c||c|c||c|c||c|c|}
    \hline
    
  \footnotesize{nr.} & {\footnotesize patterns}  & \footnotesize{nr.} & {\footnotesize patterns}  & \footnotesize{nr.} & {\footnotesize patterns} & \footnotesize{nr.} & {\footnotesize patterns}   \\ 
    \hline
    \hline 
\footnotesize{76} & $\pattern{scale = 0.5}{3}{1/1,2/2,3/3}{0/2,1/2,2/0,2/1,2/2,3/3}\pattern{scale = 0.5}{3}{1/1,2/3,3/2}{0/2,1/2,2/0,2/1,2/2,3/3}$ &
\footnotesize{77} & $\pattern{scale = 0.5}{3}{1/1,2/2,3/3}{0/0,0/2,1/2,2/0,2/1,2/2,3/3}\pattern{scale = 0.5}{3}{1/1,2/3,3/2}{0/0,0/2,1/2,2/0,2/1,2/2,3/3}$  &

\footnotesize{78} &  $\pattern{scale = 0.5}{3}{1/1,2/2,3/3}{0/1,1/0,0/2,1/2,2/0,2/1,2/2,3/3}\pattern{scale = 0.5}{3}{1/1,2/3,3/2}{0/1,1/0,0/2,1/2,2/0,2/1,2/2,3/3}$&  
\footnotesize{79} &$\pattern{scale = 0.5}{3}{1/1,2/2,3/3}{0/0,0/1,1/0,0/2,1/2,2/0,2/1,2/2,3/3}\pattern{scale = 0.5}{3}{1/1,2/3,3/2}{0/0,0/1,1/0,0/2,1/2,2/0,2/1,2/2,3/3}$   \\[5pt]
\hline
\footnotesize{80} & $\pattern{scale = 0.5}{3}{1/1,2/2,3/3}{0/0,0/2,0/3,3/0,1/2,2/0,2/1,2/2,3/3}\pattern{scale = 0.5}{3}{1/1,2/3,3/2}{0/0,0/2,0/3,3/0,1/2,2/0,2/1,2/2,3/3}$ &
\footnotesize{81} & $\pattern{scale = 0.5}{3}{1/1,2/2,3/3}{0/0,0/1,1/0,0/2,0/3,3/0,1/2,2/0,2/1,2/2,3/3}\pattern{scale = 0.5}{3}{1/1,2/3,3/2}{0/0,0/1,1/0,0/2,0/3,3/0,1/2,2/0,2/1,2/2,3/3}$  & 
\footnotesize{82} &  $\pattern{scale = 0.5}{3}{1/1,2/2,3/3}{1/1,0/2,1/2,2/0,2/1,2/2,3/3}\pattern{scale = 0.5}{3}{1/1,2/3,3/2}{1/1,0/2,1/2,2/0,2/1,2/2,3/3}$  & 
\footnotesize{83}&$\pattern{scale = 0.5}{3}{1/1,2/2,3/3}{0/0,0/2,1/1,1/2,2/0,2/1,2/2}\pattern{scale = 0.5}{3}{1/1,2/3,3/2}{0/0,0/2,1/1,1/2,2/0,2/1,2/2}$  \\[5pt]
\hline
\footnotesize{84}&$\pattern{scale = 0.5}{3}{1/1,2/2,3/3}{0/0,0/2,1/1,1/2,2/0,2/1,2/2,3/3}\pattern{scale = 0.5}{3}{1/1,2/3,3/2}{0/0,0/2,1/1,1/2,2/0,2/1,2/2,3/3}$ & 
\footnotesize{85}& $\pattern{scale = 0.5}{3}{1/1,2/2,3/3}{0/1,1/0,0/2,1/1,1/2,2/0,2/1,2/2,3/3}\pattern{scale = 0.5}{3}{1/1,2/3,3/2}{0/1,1/0,0/2,1/1,1/2,2/0,2/1,2/2,3/3}$& 
\footnotesize{86}&$\pattern{scale = 0.5}{3}{1/1,2/2,3/3}{0/0,0/2,0/3,3/0,1/1,1/2,2/0,2/1,2/2}\pattern{scale = 0.5}{3}{1/1,2/3,3/2}{0/0,0/2,0/3,3/0,1/1,1/2,2/0,2/1,2/2}$  & 
\footnotesize{87}& $\pattern{scale = 0.5}{3}{1/1,2/2,3/3}{0/0,0/2,0/3,3/0,1/1,1/2,2/0,2/1,2/2,3/3}\pattern{scale = 0.5}{3}{1/1,2/3,3/2}{0/0,0/2,0/3,3/0,1/1,1/2,2/0,2/1,2/2,3/3}$ \\[5pt]
\hline
\footnotesize{88}&$\pattern{scale = 0.5}{3}{1/1,2/2,3/3}{0/0,0/1,1/0,1/1,2/2,2/3,3/2}\pattern{scale = 0.5}{3}{1/1,2/3,3/2}{0/0,0/1,1/0,1/1,2/2,2/3,3/2}$ & 
\footnotesize{89}&$\pattern{scale = 0.5}{3}{1/1,2/2,3/3}{0/0,0/1,1/0,1/1,0/2,2/0,2/2,2/3,3/2}\pattern{scale = 0.5}{3}{1/1,2/3,3/2}{0/0,0/1,1/0,1/1,0/2,2/0,2/2,2/3,3/2}$ &
\footnotesize{90}&$\pattern{scale = 0.5}{3}{1/1,2/2,3/3}{0/0,1/1,0/2,2/0,0/3,3/0,2/2,2/3,3/2}\pattern{scale = 0.5}{3}{1/1,2/3,3/2}{0/0,1/1,0/2,2/0,0/3,3/0,2/2,2/3,3/2}$ &
\footnotesize{91}&$\pattern{scale = 0.5}{3}{1/1,2/2,3/3}{0/0,0/1,1/0,1/1,0/2,2/0,0/3,3/0,2/2,2/3,3/2}\pattern{scale = 0.5}{3}{1/1,2/3,3/2}{0/0,0/1,1/0,1/1,0/2,2/0,0/3,3/0,2/2,2/3,3/2}$  \\[5pt]
\hline 
\footnotesize{92}&$\pattern{scale = 0.5}{3}{1/1,2/2,3/3}{0/0,0/1,1/0,1/2,2/1,2/2,2/3,3/2}\pattern{scale = 0.5}{3}{1/1,2/3,3/2}{0/0,0/1,1/0,1/2,2/1,2/2,2/3,3/2}$ &
\footnotesize{93}&$\pattern{scale = 0.5}{3}{1/1,2/2,3/3}{0/1,0/2,0/3,1/0,2/0,3/0,1/1,2/2,1/3,3/1,3/3}\pattern{scale = 0.5}{3}{1/1,2/3,3/2}{0/1,0/2,0/3,1/0,2/0,3/0,1/1,2/2,1/3,3/1,3/3}$  &&&&\\[5pt]
\hline
\end{tabular}
\end{center} 
}
\vspace{-0.5cm}
\caption{Conjectured equidistributions.}\label{unsolved-table}
\end{table}
 
The pairs of patterns $\pattern{scale = 0.5}{3}{1/1,2/2,3/3}{0/0,0/1,0/2,0/3,1/0,1/1,2/0,2/2,2/3,3/0,3/2}\pattern{scale = 0.5}{3}{1/1,2/3,3/2}{0/0,0/1,0/2,0/3,1/0,1/1,2/0,2/2,2/3,3/0,3/2}$	and $\pattern{scale = 0.5}{3}{1/1,2/2,3/3}{0/0,0/1,0/2,0/3,1/0,1/2,2/0,2/1,2/2,3/0,3/3}\pattern{scale = 0.5}{3}{1/1,2/3,3/2}{0/0,0/1,0/2,0/3,1/0,1/2,2/0,2/1,2/2,3/0,3/3}$	appear to have the same distributions. Proving  the equidistribution for the second pair is  equivalent to proving Kitaev and Zhang's conjecture \cite{KZ} on equidistribution of patterns nr.\ 57 and 58, respectively,  $\pattern{scale = 0.5}{2}{1/1,2/2}{0/1,1/1,1/2,2/0}$ and $\pattern{scale = 0.5}{2}{1/1,2/2}{0/1,1/0,1/1,2/2}$, which was reposted in \cite{HanZeng}. Indeed, the equivalence of the conjectures follows from reducing to the patterns of length 2  and applying the complement operation.

Also, we encountered issues while trying to explain the equidistributions for the pairs $\pattern{scale=0.5}{3}{1/1,2/2,3/3}{0/0,0/2,1/1, 1/2,2/0,2/1,2/2} \pattern{scale=0.5}{3}{1/1,2/3,3/2}{0/0,0/2,1/1, 1/2,2/0,2/1,2/2}$ and $\pattern{scale=0.5}{3}{1/1,2/2,3/3}{0/0,0/2,0/3,1/1, 1/2,2/0,2/1,2/2,3/0} \pattern{scale=0.5}{3}{1/1,2/3,3/2}{0/0,0/2,0/3,1/1,1/2,2/0,2/1,2/2,3/0}$ using our approaches. For the former pair, consider the permutation $\pi=265143$ for which $A_1$ is given by 6543, in the reduced form 4321. Under the bijection in Theorem~\ref{equdis-2-patterns-length-2}, 4321 (the only permutation of length 4 with the maximum number of occurrences of  $\pattern{scale = 0.5}{2}{1/2,2/1}{0/0,0/1,1/0,1/1}$) must go to 1432 (the only permutation of length 4 with the maximum number of occurrences of  $\pattern{scale = 0.5}{2}{1/1,2/2}{0/0,0/1,1/0,1/1}$), and hence, $\pi$ must go to $\pi'=236154$ if we want to apply the reduction to the smaller case for $A_1$. But $\pi$ has three occurrences of $\pattern{scale=0.5}{3}{1/1,2/3,3/2}{0/0,0/2,1/1, 1/2,2/0,2/1,2/2}$ (265, 243, 143), while $\pi'$ has just one occurrence of $\pattern{scale=0.5}{3}{1/1,2/2,3/3}{0/0,0/2,1/1, 1/2,2/0,2/1,2/2}$ (236), so our approach does not work in its present form.

Finally, note that the arguments used in the proof of Theorem~\ref{thm-L-shape-2} are not applicable for proving equidistribution  for the pairs $\pattern{scale=0.5}{3}{1/1,2/2,3/3}{0/0,0/2,0/3,1/1, 1/2,2/0,2/1,2/2,3/0} \pattern{scale=0.5}{3}{1/1,2/3,3/2}{0/0,0/2,0/3,1/1,1/2,2/0,2/1,2/2,3/0}$ and $\pattern{scale=0.5}{3}{1/1,2/2,3/3}{0/0,0/2,0/3,1/1, 1/2,2/0,2/1,2/2,3/0,3/3} \pattern{scale=0.5}{3}{1/1,2/3,3/2}{0/0,0/2,0/3,1/1,1/2,2/0,2/1,2/2,3/0,3/3}$ as Corollary~\ref{two-pat-length-2}  and Theorem~\ref{equdis-2-patterns-length-2} are not particularly useful in this context. Indeed, for the permutation $\pi=36178254$, we have $A=7854\rightarrow 7845\rightarrow 4875\rightarrow8475$, resulting in $\pi'=36184275$. While $\pi$ contains occurrences 178 and 354 of the patterns with either of the two shadings, $\pi'$ does not contain any occurrences of these patterns.

\begin{table}[t]
 	{
 		\renewcommand{\arraystretch}{1.5}
 \begin{center} 
 		\begin{tabular}{|c|c|c|c|c|c|}
 			\hline
$\pattern{scale = 0.5}{3}{1/1,2/2,3/3}{0/0,0/1,0/2,1/0,2/0,3/0,0/3,1/2,1/3,2/2,2/3}\pattern{scale = 0.5}{3}{1/1,2/3,3/2}{0/0,0/1,0/2,0/3,1/0,2/0,3/0,1/2,1/3,2/2,2/3}$ &
$\pattern{scale = 0.5}{3}{1/1,2/2,3/3}{0/0,0/1,0/2,1/0,2/0,3/0,0/3,3/1,1/2,1/3,2/2,2/3}\pattern{scale = 0.5}{3}{1/1,2/3,3/2}{0/0,0/1,0/2,0/3,1/0,2/0,3/0,3/1,1/2,1/3,2/2,2/3}$ &
$\pattern{scale = 0.5}{3}{1/1,2/2,3/3}{0/0,0/1,0/2,1/0,2/0,1/2,1/3,2/2,2/3}\pattern{scale = 0.5}{3}{1/1,2/3,3/2}{0/0,0/1,0/2,1/0,2/0,1/2,1/3,2/2,2/3}$ &
$\pattern{scale = 0.5}{3}{1/1,2/2,3/3}{0/0,0/1,0/2,1/0,2/0,3/1,1/2,1/3,2/2,2/3}\pattern{scale = 0.5}{3}{1/1,2/3,3/2}{0/0,0/1,0/2,1/0,2/0,3/1,1/2,1/3,2/2,2/3}$  &
$\pattern{scale = 0.5}{3}{1/1,2/2,3/3}{0/1,1/1,1/2,1/3,2/1,2/2,2/3,3/1,3/2,3/3}\pattern{scale = 0.5}{3}{1/1,2/3,3/2}{0/1,1/1,1/2,1/3,2/1,2/2,2/3,3/1,3/2,3/3}$ & 
$\pattern{scale = 0.5}{3}{1/1,2/2,3/3}{0/2,1/1,1/2,1/3,2/1,2/2,2/3,3/1,3/2,3/3}\pattern{scale = 0.5}{3}{1/1,2/3,3/2}{0/2,1/1,1/2,1/3,2/1,2/2,2/3,3/1,3/2,3/3}$ \\[5pt]
\hline
$\pattern{scale = 0.5}{3}{1/1,2/2,3/3}{0/0,0/1,1/1,1/2,2/1,2/2,2/3,1/3,3/1,3/2,3/3}\pattern{scale = 0.5}{3}{1/1,2/3,3/2}{0/0,0/1,1/1,1/2,2/1,2/2,2/3,1/3,3/1,3/2,3/3}$ & 
$\pattern{scale = 0.5}{3}{1/1,2/2,3/3}{0/0,0/2,1/1,1/2,1/3,2/1,2/2,2/3,3/1,3/2,3/3}\pattern{scale = 0.5}{3}{1/1,2/3,3/2}{0/0,0/2,1/1,1/2,1/3,2/1,2/2,2/3,3/1,3/2,3/3}$ &
$\pattern{scale = 0.5}{3}{1/1,2/2,3/3}{0/1,0/2,1/1,1/2,1/3,2/1,2/2,2/3,3/1,3/2,3/3}\pattern{scale = 0.5}{3}{1/1,2/3,3/2}{0/1,0/2,1/1,1/2,1/3,2/1,2/2,2/3,3/1,3/2,3/3}$ & 
$\pattern{scale = 0.5}{3}{1/1,2/2,3/3}{0/1,1/1,1/2,1/3,2/0,2/1,2/2,2/3,3/1,3/2,3/3}\pattern{scale = 0.5}{3}{1/1,2/3,3/2}{0/1,1/1,1/2,1/3,2/0,2/1,2/2,2/3,3/1,3/2,3/3}$ &
$\pattern{scale = 0.5}{3}{1/1,2/2,3/3}{0/0,0/1,1/1,1/2,1/3,2/0,2/1,2/2,2/3,3/1,3/2,3/3}\pattern{scale = 0.5}{3}{1/1,2/3,3/2}{0/0,0/1,1/1,1/2,1/3,2/0,2/1,2/2,2/3,3/1,3/2,3/3}$ & 
$\pattern{scale = 0.5}{3}{1/1,2/2,3/3}{0/1,1/0,0/2,1/1,1/2,1/3,2/1,2/2,2/3,3/1,3/2,3/3}\pattern{scale = 0.5}{3}{1/1,2/3,3/2}{0/1,0/2,1/0,1/1,1/2,1/3,2/1,2/2,2/3,3/1,3/2,3/3}$ \\[5pt]
\hline
$\pattern{scale = 0.5}{3}{1/1,2/2,3/3}{0/1,0/2,1/1,1/2,1/3,2/0,2/1,2/2,2/3,3/1,3/2,3/3}\pattern{scale = 0.5}{3}{1/1,2/3,3/2}{0/1,0/2,1/1,1/2,1/3,2/0,2/1,2/2,2/3,3/1,3/2,3/3}$ & 
$\pattern{scale = 0.5}{3}{1/1,2/2,3/3}{0/0,0/1,0/2,1/1,1/2,1/3,2/1,2/2,2/3,3/1,3/2,3/3}\pattern{scale = 0.5}{3}{1/1,2/3,3/2}{0/0,0/1,0/2,1/1,1/2,1/3,2/1,2/2,2/3,3/1,3/2,3/3}$ &
$\pattern{scale = 0.5}{3}{1/1,2/2,3/3}{0/0,0/1,1/0,1/1,1/2,1/3,0/2,,2/1,2/2,2/3,3/1,3/2,3/3}\pattern{scale = 0.5}{3}{1/1,2/3,3/2}{0/0,0/1,1/0,1/1,1/2,1/3,0/2,2/1,2/2,2/3,3/1,3/2,3/3}$ &
$\pattern{scale = 0.5}{3}{1/1,2/2,3/3}{0/0,0/2,0/1,1/1,1/2,1/3,2/0,2/1,2/2,2/3,3/1,3/2,3/3}\pattern{scale = 0.5}{3}{1/1,2/3,3/2}{0/0,0/2,0/1,1/1,1/2,1/3,2/0,2/1,2/2,2/3,3/1,3/2,3/3}$  &
$\pattern{scale = 0.5}{3}{1/1,2/2,3/3}{0/1,2/2,2/3,3/2,3/3}\pattern{scale = 0.5}{3}{1/1,2/3,3/2}{0/1,2/2,2/3,3/2,3/3}$ & 
$\pattern{scale = 0.5}{3}{1/1,2/2,3/3}{0/0,0/1,2/2,2/3,3/2,3/3}\pattern{scale = 0.5}{3}{1/1,2/3,3/2}{0/0,0/1,2/2,2/3,3/2,3/3}$ \\[5pt]
\hline  
$\pattern{scale = 0.5}{3}{1/1,2/2,3/3}{0/1,1/1,2/2,2/3,3/2,3/3}\pattern{scale = 0.5}{3}{1/1,2/3,3/2}{0/1,1/1,2/2,2/3,3/2,3/3}$ & 
$\pattern{scale = 0.5}{3}{1/1,2/2,3/3}{0/0,0/1,1/1,2/2,2/3,3/2,3/3}\pattern{scale = 0.5}{3}{1/1,2/3,3/2}{0/0,0/1,1/1,2/2,2/3,3/2,3/3}$ &
$\pattern{scale = 0.5}{3}{1/1,2/2,3/3}{0/1,1/2,1/3,2/1,2/2,2/3,3/1,3/2,3/3}\pattern{scale = 0.5}{3}{1/1,2/3,3/2}{0/1,1/2,1/3,2/1,2/2,2/3,3/1,3/2,3/3}$ & 
$\pattern{scale = 0.5}{3}{1/1,2/2,3/3}{0/2,1/2,1/3,2/1,2/2,2/3,3/1,3/2,3/3}\pattern{scale = 0.5}{3}{1/1,2/3,3/2}{0/2,1/2,1/3,2/1,2/2,2/3,3/1,3/2,3/3}$ & 
$\pattern{scale = 0.5}{3}{1/1,2/2,3/3}{0/0,0/1,1/2,1/3,2/1,2/2,2/3,3/1,3/2,3/3}\pattern{scale = 0.5}{3}{1/1,2/3,3/2}{0/0,0/1,1/2,1/3,2/1,2/2,2/3,3/1,3/2,3/3}$ & 
$\pattern{scale = 0.5}{3}{1/1,2/2,3/3}{0/0,0/2,1/2,1/3,2/1,2/2,2/3,3/1,3/2,3/3}\pattern{scale = 0.5}{3}{1/1,2/3,3/2}{0/0,0/2,1/2,1/3,2/1,2/2,2/3,3/1,3/2,3/3}$ \\[5pt]
\hline
$\pattern{scale = 0.5}{3}{1/1,2/2,3/3}{0/1,0/2,1/2,1/3,2/1,2/2,2/3,3/1,3/2,3/3}\pattern{scale = 0.5}{3}{1/1,2/3,3/2}{0/1,0/2,1/2,1/3,2/1,2/2,2/3,3/1,3/2,3/3}$ & 
$\pattern{scale = 0.5}{3}{1/1,2/2,3/3}{0/1,1/2,1/3,2/0,2/1,2/2,2/3,3/1,3/2,3/3}\pattern{scale = 0.5}{3}{1/1,2/3,3/2}{0/1,1/2,1/3,2/0,2/1,2/2,2/3,3/1,3/2,3/3}$ &
$\pattern{scale = 0.5}{3}{1/1,2/2,3/3}{0/0,0/1,1/2,1/3,2/0,2/1,2/2,2/3,3/1,3/2,3/3}\pattern{scale = 0.5}{3}{1/1,2/3,3/2}{0/0,0/1,1/2,1/3,2/0,2/1,2/2,2/3,3/1,3/2,3/3}$ & 
$\pattern{scale = 0.5}{3}{1/1,2/2,3/3}{0/1,1/0,1/2,1/3,2/0,2/1,2/2,2/3,3/1,3/2,3/3}\pattern{scale = 0.5}{3}{1/1,2/3,3/2}{0/1,1/0,1/2,1/3,2/0,2/1,2/2,2/3,3/1,3/2,3/3}$ &
$\pattern{scale = 0.5}{3}{1/1,2/2,3/3}{0/1,0/2,1/2,1/3,2/0,2/1,2/2,2/3,3/1,3/2,3/3}\pattern{scale = 0.5}{3}{1/1,2/3,3/2}{0/1,0/2,1/2,1/3,2/0,2/1,2/2,2/3,3/1,3/2,3/3}$ & 
$\pattern{scale = 0.5}{3}{1/1,2/2,3/3}{0/0,0/1,0/2,1/2,1/3,2/1,2/2,2/3,3/1,3/2,3/3}\pattern{scale = 0.5}{3}{1/1,2/3,3/2}{0/0,0/1,0/2,1/2,1/3,2/1,2/2,2/3,3/1,3/2,3/3}$ \\[5pt]
\hline
$\pattern{scale = 0.5}{3}{1/1,2/2,3/3}{0/0,0/1,1/0,1/2,1/3,2/0,2/1,2/2,2/3,3/1,3/2,3/3}\pattern{scale = 0.5}{3}{1/1,2/3,3/2}{0/0,0/1,1/0,1/2,1/3,2/0,2/1,2/2,2/3,3/1,3/2,3/3}$ &
$\pattern{scale = 0.5}{3}{1/1,2/2,3/3}{0/0,0/2,1/0,1/2,1/3,2/0,2/1,2/2,2/3,3/1,3/2,3/3}\pattern{scale = 0.5}{3}{1/1,2/3,3/2}{0/0,0/2,1/0,1/2,1/3,2/0,2/1,2/2,2/3,3/1,3/2,3/3}$ &
$\pattern{scale = 0.5}{3}{1/1,2/2,3/3}{0/1,0/2,0/3,2/0,2/2,2/3,3/0,3/2,3/3}\pattern{scale = 0.5}{3}{1/1,2/3,3/2}{0/1,0/2,0/3,2/0,2/2,2/3,3/0,3/2,3/3}$ & 
$\pattern{scale = 0.5}{3}{1/1,2/2,3/3}{0/0,0/1,0/2,0/3,2/0,2/2,2/3,3/0,3/2,3/3}\pattern{scale = 0.5}{3}{1/1,2/3,3/2}{0/0,0/1,0/2,0/3,2/0,2/2,2/3,3/0,3/2,3/3}$ & 
$\pattern{scale = 0.5}{3}{1/1,2/2,3/3}{0/1,0/2,0/3,1/1,2/0,2/2,2/3,3/0,3/2,3/3}\pattern{scale = 0.5}{3}{1/1,2/3,3/2}{0/1,0/2,0/3,1/1,2/0,2/2,2/3,3/0,3/2,3/3}$ & 
$\pattern{scale = 0.5}{3}{1/1,2/2,3/3}{0/0,0/1,0/2,0/3,1/1,2/0,2/2,2/3,3/0,3/2,3/3}\pattern{scale = 0.5}{3}{1/1,2/3,3/2}{0/0,0/1,0/2,0/3,1/1,2/0,2/2,2/3,3/0,3/2,3/3}$ \\[5pt]
 			\hline
	\end{tabular}
\end{center} 
}
\vspace{-0.5cm}
\caption{Non-symmetric equidistributions, up to reflection with respect to the anti-diagonal, obtained as a by-product in this paper. The first four equidistributions in the top row are obtained by applying the complement to the patterns in Corollary~\ref{two-pat-length-2} and Theorem~\ref{equdis-2-patterns-length-2}. The other equidistributions are obtained from the proofs of Theorems~\ref{9-box-rs},~\ref{2x2-thm},~\ref{2x2+2-thm} and~\ref{2x2+2+2-thm}.}\label{non-sym-table}
\end{table}

Even though our focus in this paper has been on symmetric shadings, it follows from the proofs of Theorems~\ref{9-box-rs} and~\ref{2x2-thm-cor}, where the symmetry of additional shadings is irrelevant, that the pairs of mesh patterns listed in Table~\ref{non-sym-table} are equidistributed. Interestingly,  the patterns in the set  $\left\{\hspace{-1mm}\pattern{scale = 0.5}{3}{1/1,2/2,3/3}{0/0,0/1,2/2,2/3,3/2,3/3},\hspace{-1mm}\pattern{scale = 0.5}{3}{1/1,2/3,3/2}{0/0,0/1,2/2,2/3,3/2,3/3},\hspace{-1mm}\pattern{scale = 0.5}{3}{1/1,2/2,3/3}{0/1,1/1,2/2,2/3,3/2,3/3},\hspace{-1mm}\pattern{scale = 0.5}{3}{1/1,2/3,3/2}{0/1,1/1,2/2,2/3,3/2,3/3}\right\}$ appear to have the same distribution. Why is this the case? \\

\noindent
{\bf Acknowledgement.}  The authors are grateful to Philip B. Zhang for sharing with us the results of computer experiments related to the equidistributions of mesh patterns in question.  \\

\end{document}